\newtheorem{defi}{Definition}[section]
\newtheorem{lem}[defi]{Lemma}
\newtheorem{prop}[defi]{Proposition}
\newtheorem{theo}[defi]{Theorem}
\newtheorem{cor}[defi]{Corollary}
\newtheorem{rk}[defi]{Remark}
\newtheorem{ex}[defi]{Example}
\newcommand{\dd}{\mathrm{d}}
\newcommand{\R}{\mathbb{R}}
\newcommand{\Exp}{\mathbf{E}}
\renewcommand{\Pr}{\mathbf{P}}
\newcommand{\ind}[1]{\mathds{1}_{\{#1\}}}
\newcommand{\dto}{\downarrow}
\renewcommand{\bar}{\overline}
\renewcommand{\hat}{\widehat}
\renewcommand{\tilde}{\widetilde}
\newcommand{\Ws}{\mathrm{W}}
\newcommand{\Ps}{\mathcal{P}}
\newcommand{\tPs}{\tilde{\Ps}}
\newcommand{\bPs}{\bar{\Ps}}
\newcommand{\Pst}{\Ps_*}
\newcommand{\PP}{\mathbb{P}}
\newcommand{\QQ}{\mathbb{Q}}
\newcommand{\FreEne}{\mathscr{F}}
\newcommand{\GenFun}{\mathscr{G}}
\newcommand{\IntEne}{\mathscr{W}}
\newcommand{\intene}{W}
\newcommand{\PinEne}{\mathscr{V}}
\newcommand{\Ent}{\mathscr{S}}
\newcommand{\RelEnt}{\mathscr{R}}
\newcommand{\Rate}{\mathscr{I}}
\newcommand{\tRate}{\tilde{\Rate}}
\newcommand{\bRate}{\bar{\Rate}}
\newcommand{\bFreEne}{\bar{\FreEne}}
\newcommand{\bEnt}{\bar{\Ent}}
\newcommand{\bGenFun}{\bar{\GenFun}}
\newcommand{\bIntEne}{\bar{\IntEne}}
\newcommand{\tX}{\tilde{X}}
\newcommand{\tx}{\tilde{x}}
\newcommand{\tbx}{\tilde{\mathbf{x}}}
\newcommand{\tby}{\tilde{\mathbf{y}}}
\newcommand{\bx}{\mathbf{x}}
\newcommand{\Mdn}{M_{d,n}}
\newcommand{\Rdn}{(\R^d)^n}
\newcommand{\tP}{\tilde{P}}
\newcommand{\tp}{\tilde{p}}
\newcommand{\tPP}{\tilde{\PP}}
\newcommand{\bPP}{\bar{\PP}}
\newcommand{\tZ}{\tilde{Z}}
\newcommand{\tz}{\tilde{z}}
\newcommand{\tmu}{\tilde{\mu}}
\newcommand{\bmu}{\bar{\mu}}
\newcommand{\bnu}{\bar{\nu}}
\newcommand{\blambda}{\bar{\lambda}}
\newcommand{\Tc}{\mathrm{T}} 
\newcommand{\tc}{\mathrm{t}} 
\newcommand{\dP}{d_{\mathrm{P}}}
\newcommand{\bdP}{\bar{d}_{\mathrm{P}}}
\newcommand{\kB}{\mathrm{k}}
\newcommand{\MV}{\textsc{mv}}
\newcommand{\RB}{\textsc{rb}}
\title{Equilibrium large deviations for mean-field systems with translation invariance}
\author{Julien Reygner}
\address{Universit\'e Paris-Est, CERMICS (ENPC), F-77455 Marne-la-Vall\'ee}
\email{julien.reygner@enpc.fr}
\thanks{This work is partially supported by: the European Research Council under the European Union's Seventh Framework Programme (FP/2007-2013) / ERC Grant Agreement number 614492; the Chaire Risques Financiers, Fondation du Risque; and the French National Research Agency (ANR) under the programs ANR-12-BLAN Stab and ANR-17-CE40-0030 EFI}
\keywords{Large deviations, mean-field systems, McKean-Vlasov particle systems, rank-based interacting diffusions, free energy.}
\subjclass[2010]{60F10, 60J60, 60K35}
\def\section{\@startsection{section}{1}%
\z@{.7\linespacing\@plus\linespacing}{.5\linespacing}%
{\normalfont\bfseries\centering}}
\def\@settitle{\begin{center}%
  \baselineskip14\p@\relax
    \bfseries
    \LARGE\@title
  \end{center}%
}
\def\@setauthors{%
  \begingroup
  \trivlist
  \centering\footnotesize \@topsep30\p@\relax
  \advance\@topsep by -\baselineskip
  \item\relax
  \andify\authors
  \def\\{\protect\linebreak}%
 {\Large\authors}%
  \endtrivlist
  \endgroup
}
\def\maketitle{\par
  \@topnum\z@ 
  \@setcopyright
  \thispagestyle{firstpage}
  \ifx\@empty\shortauthors \let\shortauthors\shorttitle
  \else \andify\shortauthors
  \fi
  \@maketitle@hook
  \begingroup
  \@maketitle
  \toks@\@xp{\shortauthors}\@temptokena\@xp{\shorttitle}%
  \toks4{\def\\{ \ignorespaces}}
  \edef\@tempa{%
    \@nx\markboth{\the\toks4
      \@nx{\the\toks@}}{\the\@temptokena}}%
  \@tempa
  \endgroup
  \c@footnote\z@
  \def\do##1{\let##1\relax}%
  \do\maketitle \do\@maketitle \do\title \do\@xtitle \do\@title
  \do\author \do\@xauthor \do\address \do\@xaddress
  \do\email \do\@xemail \do\curraddr \do\@xcurraddr
  \do\commby \do\@commby
  \do\dedicatory \do\@dedicatory \do\thanks \do\thankses
  \do\keywords \do\@keywords \do\subjclass \do\@subjclass
}
\begin{document}

\begin{abstract}
  We consider particle systems with mean-field interactions whose distribution is invariant by translations. Under the assumption that the system seen from its centre of mass be reversible with respect to a Gibbs measure, we establish large deviation principles for its empirical measure at equilibrium. Our study covers the cases of McKean-Vlasov particle systems without external potential, and systems of rank-based interacting diffusions. Depending on the strength of the interaction, the large deviation principles are stated in the space of centered probability measures endowed with the Wasserstein topology of appropriate order, or in the orbit space of the action of translations on probability measures. An application to the study of atypical capital distribution is detailed.
\end{abstract}

\maketitle


\section{Introduction}

This work is dedicated to the study of the large deviations of the empirical measure of particle systems at equilibrium exhibiting the following formal features:
\begin{enumerate}[label=(\alph*),ref=\alph*]
  \item\label{cond:a} they are reversible with respect to an explicit Gibbs measure;
  \item\label{cond:b} the particles are coupled through mean-field interactions;
  \item\label{cond:c} their distribution is invariant under spatial translations.
\end{enumerate}
The typical models that we aim to study include McKean-Vlasov particle systems without external potential, whose mean-field limit allows to approximate the granular media equation~\cite{BenRoyTalVal98, BenRoyVal98, Mal03, CatGuiMal08}, and systems of one-dimensional diffusions interacting through their rank, which arise in the probabilistic interpretation of scalar nonlinear conservation laws~\cite{BosTal96, BosTal97, Jou00:MCAP, Shk12, JouRey13}. Both models also appear in mathematical finance, in the modelling of inter-bank borrowing and lending~\cite{FouSun13} and of stable equity markets~\cite{Fer02, JouRey15}, respectively.

For McKean-Vlasov particle systems \emph{with} an external potential, which in general satisfy the conditions~\eqref{cond:a} and~\eqref{cond:b} but not~\eqref{cond:c}, the large deviations of the empirical measure of the particle system under its equilibrium measure are governed by the \emph{free energy} functional, which combines entropic and energetic contributions. Prefiguring the interpretation by Otto~\cite{JorKinOtt98,Ott01} and Carrillo, McCann and Villani~\cite{CarMcCVil03,CarMcCVil06} of (nonlinear) Fokker-Planck equations as functional gradient flows, Dawson and Gärtner~\cite{DawGar86,DawGar87,DawGar89} showed that, for such systems, the free energy plays the role of a quasipotential, in the sense of the Freidlin-Wentzell theory. Thus it not only describes the \emph{static} large scale properties of the particle system, such as typical configurations or possible phase transitions, but it also sheds light on its large scale \emph{dynamics}, providing both typical paths and fluctuation rates.

For models satisfying the condition~\eqref{cond:c}, translation invariance generally prevents ergodicity, so that there is no equilibrium measure for the original particle system. Still it was noted in~\cite{Mal03} for McKean-Vlasov systems, and in~\cite{JouMal08,PalPit08} for rank-based interacting diffusions, that under suitable assumptions on the interactions between the particles, a stationary behaviour can be observed for the particle system \emph{seen from its centre of mass}. Centering the particle system induces a conserved quantity in its evolution, and the purpose of this article is to understand the effect of this constraint on its equilibrium large deviations. To the best of the author's knowledge, this is the first study in this direction.

For such systems, a free energy functional can still be defined, with an energetic contribution depending only on the interaction between the particles. Thus it may be expected that, under the assumption that the centered particle system be ergodic, the large deviations of its empirical measure at equilibrium be described by this free energy functional, restricted to the space of \emph{centered} probability measures. The first result of this article, Theorem~\ref{theo:Wass}, provides a rigorous formulation of this assertion; however, it only holds under the assumption that the interaction between particles be strong enough, in a sense to be made precise below --- typically, for McKean-Vlasov systems with an interaction potential growing faster than linearly. In contrast, when this assumption is not satisfied, which turns out to be the case for systems of rank-based interacting diffusions, we show that the rate function may fail to have compact level sets, so that the expected large deviation principle \emph{does not hold}. This is formally explained by the following two facts: the topology on which a large deviation principle can be expected to hold depends on the strength of the interaction; and on too weak topologies, the space of centered probability measures is not closed.

In order to connect the free energy functional to the equilibrium large deviations of the particle system without restriction on the strength of the interaction, and thereby cover the case of rank-based interacting diffusions, we avoid resorting to the notion of centered probability measures, and rather work at the level of the \emph{orbit} of the empirical measure of the particle system at equilibrium, under the action of translations. This provides an equivalent description of the particle system, however the quotient topology on the orbit space becomes weak enough for a large deviation principle to hold without any assumption on the strength of the interaction. This is the second main result of the article, Theorem~\ref{theo:quot}, which is weaker than the first in the sense that it is implied by the latter, but holds under less restrictive assumptions.

The adaptation of these results to the specific examples of McKean-Vlasov particle systems and systems of rank-based interacting diffusions are stated as corollaries. For the latter example, the large deviation principle allows to associate a notion of free energy to scalar nonlinear conservation laws, which complements, at the level of the stationary measure, the results by Dembo, Shkolnikov, Varadhan and Zeitouni on finite time intervals~\cite{DemShkVarZei16}. As an application, we discuss at the end of the article the estimation of the probability of an atypical capital distribution in the framework of Fernholz' \emph{Stochastic Portfolio Theory}~\cite{Fer02}.

\subsection*{Outline of the article} The notations and main results of the article are presented in Section~\ref{s:main}. The proof of our two main theorems is based on the approximation of the particle system without external potential by a particle system with a small external potential. The large deviation results for this approximating system are presented in Section~\ref{s:conf}, and the control of these results when the external potential vanishes is studied in Section~\ref{s:pfmain}. The application of the main results to the particular cases of McKean-Vlasov particle systems, and systems of rank-based interacting diffusions, is detailed in Section~\ref{s:appl}. A technical result on the metrisability of the quotient topology is proved in Appendix~\ref{s:app}. 


\section{Notations and main results}\label{s:main}
\subsection{Spaces of probability measures} For $d \geq 1$, we denote by $\Ps(\R^d)$ the space of Borel probability measures on $\R^d$. It is endowed with the topology of weak convergence~\cite[Chapter~1, p.~7]{Bil99}, which makes it a Polish space~\cite[Theorem~6.8, p.~73]{Bil99}.

For all $y \in \R^d$, we define the \emph{translation by $y$} as the operator $\tau_y : \Ps(\R^d) \to \Ps(\R^d)$ such that, for all $\mu \in \Ps(\R^d)$,
\begin{equation*}
  \int_{x \in \R^d} f(x) \dd\tau_y\mu(x) = \int_{x \in \R^d} f(x+y) \dd\mu(x),
\end{equation*}
for all measurable and bounded functions $f : \R^d \to \R$. It is known that the operator $\tau_y$ is continuous on $\Ps(\R^d)$.

For all $p \geq 1$, we denote by $\Ps_p(\R^d)$ the space of Borel probability measures on $\R^d$ with a finite $p$-th order moment. It is endowed with the \emph{Wasserstein topology of order $p$}~\cite[Definition~6.8, p.~96]{Vil09}, which makes it a Polish space~\cite[Theorem~6.18, p.~104]{Vil09}.

The Wasserstein topology is stronger than the topology induced on $\Ps_p(\R^d)$ by the topology of weak convergence on $\Ps(\R^d)$, so that for any $y \in \R^d$, the translation $\tau_y$ is continuous on $\Ps_p(\R^d)$.

We denote by $\tPs_p(\R^d)$ the subset of \emph{centered} probability measures with a finite $p$-th order moment, and define the \emph{centering operator} $\Tc : \Ps_p(\R^d) \to \tPs_p(\R^d)$ by
\begin{equation*}
  \Tc\mu = \tau_{-\xi}\mu, \qquad \xi := \int_{x \in \R^d} x \dd\mu(x),
\end{equation*}
for all $\mu \in \Ps_p(\R^d)$. It is easily checked that $\Tc$ is continuous on $\Ps_p(\R^d)$, and that $\tPs_p(\R^d)$ is a closed subset of $\Ps_p(\R^d)$, hence it is a Polish space itself.

In the sequel of this article, we shall consider probability measures defined on the respective Borel $\sigma$-fields of the topological spaces $\Ps(\R^d)$, $\Ps_p(\R^d)$ and $\tPs_p(\R^d)$.

\subsection{Energy functional and Gibbs measure} Throughout the article, the \emph{temperature parameter} $\sigma^2>0$ is fixed.

The physical systems which we aim to study are described by an \emph{energy functional}
\begin{equation*}
  \IntEne : \Ps(\R^d) \to [0,+\infty]
\end{equation*}
satisfying the following set of conditions:
\begin{enumerate}[label=(TI),ref=TI]
  \item\label{ass:TI} translation invariance: for all $y \in \R^d$, for all $\mu \in \Ps(\R^d)$, $\IntEne[\tau_y\mu] = \IntEne[\mu]$;
\end{enumerate}\begin{enumerate}[label=($\sigma$F),ref=$\sigma$F]
  \item\label{ass:F} $\sigma$-finiteness: if $\mu$ has compact support, then $\IntEne[\mu] < +\infty$;
\end{enumerate}\begin{enumerate}[label=(LSC),ref=LSC]
  \item\label{ass:LSC} the function $\IntEne$ is lower semicontinuous on $\Ps(\R^d)$;
\end{enumerate}\begin{enumerate}[label=(GC),ref=GC]
  \item\label{ass:GC} growth control: there exists $\ell \geq 1$ and $\kappa_\ell > 0$ such that $\IntEne[\mu]=+\infty$ if $\mu \not\in \Ps_\ell(\R^d)$, and
  \begin{equation*}
    \forall \tmu \in \tPs_\ell(\R^d), \qquad \IntEne[\tmu] \geq \kappa_\ell \int_{x \in \R^d} |x|^\ell \dd\tmu(x).
  \end{equation*}
\end{enumerate}

For all $n \geq 2$, the \emph{energy} of a configuration $\bx = (x_1, \ldots, x_n) \in \Rdn$ of a system with $n$ particles is defined by
\begin{equation}\label{eq:intene}
  \intene_n(\bx) := \IntEne[\pi_n(\bx)],
\end{equation}
where
\begin{equation}\label{eq:pin}
  \pi_n(\bx) := \frac{1}{n}\sum_{i=1}^n \delta_{x_i} \in \Ps(\R^d)
\end{equation}
is the empirical measure of the configuration $\bx$. Notice that Assumption~\eqref{ass:F} ensures that $\intene_n(\bx) < +\infty$ for any configuration $\bx \in \Rdn$. 

The Gibbs density $\exp(-\frac{2n}{\sigma^2} \intene_n(\bx))$ naturally associated with the energy function $\intene_n$ is never integrable on $\Rdn$, because Assumption~\eqref{ass:TI} implies that $\intene_n(\bx)$ is invariant under the translations $(x_1, \ldots, x_n) \mapsto (x_1+\zeta, \ldots, x_n+\zeta)$, $\zeta \in \R^d$. However, introducing the linear subspace 
\begin{equation}\label{eq:Mdn}
  \Mdn := \{\tbx = (\tx_1, \ldots, \tx_n) \in \Rdn : \tx_1 + \cdots + \tx_n = 0\},
\end{equation}
and denoting by $\dd\tbx$ the Lebesgue measure on $\Mdn$, we get the following first result.

\begin{lem}[Finiteness of the partition function]\label{lem:tP}
  Let $\IntEne : \Ps(\R^d) \to [0,+\infty]$ be an energy functional satisfying Assumptions~\eqref{ass:TI}, \eqref{ass:F}, \eqref{ass:LSC} and~\eqref{ass:GC}. For all $n \geq 2$, we have
  \begin{equation}\label{eq:tZ}
    \tZ_n := \int_{\tbx\in\Mdn} \exp\left(-\frac{2n}{\sigma^2} \intene_n(\tbx)\right) \dd\tbx \in (0,+\infty),
  \end{equation}
  where the function $\intene_n$ is defined by~\eqref{eq:intene}.
\end{lem}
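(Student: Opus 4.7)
The proof has two parts: establishing $\tZ_n < +\infty$ (the nontrivial upper bound) and $\tZ_n > 0$ (essentially by positivity of the integrand). The key observation is that, because we restrict to $\tbx \in \Mdn$, the empirical measure $\pi_n(\tbx)$ is automatically centered, with mean $\frac{1}{n}\sum_{i=1}^n \tx_i = 0$, so it lies in $\tPs_\ell(\R^d)$ as soon as its $\ell$-th moment is finite.

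For the upper bound, the plan is to apply the growth control assumption~\eqref{ass:GC} pointwise. For any $\tbx \in \Mdn$, since $\pi_n(\tbx) \in \tPs_\ell(\R^d)$, one obtains
\begin{equation*}
  \intene_n(\tbx) = \IntEne[\pi_n(\tbx)] \geq \kappa_\ell \int_{x \in \R^d} |x|^\ell \dd\pi_n(\tbx)(x) = \frac{\kappa_\ell}{n}\sum_{i=1}^n |\tx_i|^\ell,
\end{equation*}
hence $\exp(-\tfrac{2n}{\sigma^2}\intene_n(\tbx)) \leq \exp(-\tfrac{2\kappa_\ell}{\sigma^2}\sum_{i=1}^n |\tx_i|^\ell)$. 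I would then parameterise $\Mdn$ by $(\tx_1,\ldots,\tx_{n-1}) \in (\R^d)^{n-1}$ with $\tx_n = -(\tx_1+\cdots+\tx_{n-1})$, which is an affine isomorphism from $\R^{d(n-1)}$ onto $\Mdn$ with constant Jacobian. Dropping the nonnegative contribution $|\tx_n|^\ell$ from the bound, the integral decouples into $n-1$ copies of $\int_{\R^d}\exp(-\tfrac{2\kappa_\ell}{\sigma^2}|x|^\ell)\dd x$, which is finite because $\ell \geq 1$.

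For the lower bound, I would first combine assumption~\eqref{ass:LSC} with the continuity of $\tbx \mapsto \pi_n(\tbx)$ into $\Ps(\R^d)$ to conclude that $\intene_n$ is lower semicontinuous on $\Mdn$, so that the integrand is Borel measurable. Assumption~\eqref{ass:F} then gives $\intene_n(\tbx) < +\infty$ for every $\tbx \in \Mdn$, because $\pi_n(\tbx)$ is finitely supported. The integrand is therefore strictly positive on $\Mdn$. Writing $\Mdn$ as the increasing union of the measurable sets $\{\intene_n \leq k\}$ for $k \in \N$, at least one of them has positive Lebesgue measure; on it the integrand is bounded below by $\exp(-\tfrac{2nk}{\sigma^2})$, which yields $\tZ_n > 0$.

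The only substantive obstacle is turning the abstract growth control~\eqref{ass:GC} into a concrete pointwise bound on $\intene_n$ that is integrable over the affine subspace $\Mdn$, and this is handled cleanly by the centering identity and the separation-of-variables argument above; the remainder is routine measure theory.
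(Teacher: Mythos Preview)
Your proposal is correct and follows essentially the same route as the paper: both arguments use Assumption~\eqref{ass:GC} on the centered empirical measure $\pi_n(\tbx)$ to bound $\intene_n(\tbx)\geq\frac{\kappa_\ell}{n}\sum_i|\tx_i|^\ell$, drop the $|\tx_n|^\ell$ term, and parametrise $\Mdn$ by $(\tx_1,\ldots,\tx_{n-1})$ to reduce to a product of finite one-body integrals. Your treatment of the lower bound via the exhaustion $\{\intene_n\leq k\}$ is a bit more explicit than the paper's, which simply observes that~\eqref{ass:F} and~\eqref{ass:LSC} make the integrand strictly positive and measurable, hence $\tZ_n\in(0,+\infty]$.
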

\begin{proof}
  The combination of Assumptions~\eqref{ass:F} and~\eqref{ass:LSC} ensures that the function $\tbx \mapsto \exp(-\frac{2n}{\sigma^2} \intene_n(\tbx))$ is positive and measurable, so that $\tZ_n$ is well-defined as an element of $(0,+\infty]$. Using~\eqref{eq:Mdn}, Assumption~\eqref{ass:GC} and the trivial bound
  \begin{equation*}
    \sum_{i=1}^n |\tx_i|^\ell \geq \sum_{i=1}^{n-1} |\tx_i|^\ell,
  \end{equation*}
  we get the inequality
  \begin{equation*}
    \tZ_n \leq \int_{\tbx\in\Mdn} \exp\left(-\frac{2\kappa_\ell}{\sigma^2} \sum_{i=1}^{n-1}|\tx_i|^\ell\right) \dd\tbx,
  \end{equation*}
  whose right-hand side is proven to be finite by using the parametrisation of $\tbx = (\tx_1, \ldots, \tx_n) \in \Mdn$ by $(\tx_1, \ldots, \tx_{n-1}) \in (\R^d)^{n-1}$.
\end{proof}

\begin{defi}[Gibbs measure]\label{defi:tP}
  Under the assumptions of Lemma~\ref{lem:tP}, we denote by $\tP_n$ the probability measure on $\Rdn$ with density
  \begin{equation}\label{eq:tp}
    \tp_n(\tbx) := \frac{1}{\tZ_n}\exp\left(-\frac{2n}{\sigma^2} \intene_n(\tbx)\right)
  \end{equation} 
  with respect to the Lebesgue measure $\dd\tbx$ on $\Mdn$.
\end{defi}

By definition, for all $n \geq 2$, the probability measure $\tP_n$ gives full weight to the subspace $\Mdn$.

\subsection{Two specific examples}\label{ss:examples} When $\intene_n$ is smooth enough to ensure the well-posedness of the system of stochastic differential equations
\begin{equation}\label{eq:sdeX}
  \dd X_i(t) = -n\nabla_{x_i} \intene_n\left(X_1(t), \ldots, X_n(t)\right)\dd t + \sigma \dd \beta_i(t),
\end{equation}
with $\beta_1, \ldots, \beta_n$ independent standard $\R^d$-valued Brownian motions, the Gibbs measure $\tP_n$ of Definition~\ref{defi:tP} is related to the long time behaviour of the diffusion process $(X_1(t), \ldots, X_n(t))_{t \geq 0}$. We first give two explicit examples of such processes, for which the energy functional satisfies Assumption~\eqref{ass:TI}.

\begin{ex}[\MV-model]
  Given a smooth, nonnegative and even \emph{interaction potential} $W : \R^d \to \R$, the energy functional
  \begin{equation}\label{eq:MVIntEne}
    \IntEne[\mu] = \frac{1}{2}\iint_{x,y \in \R^d} W(x-y)\dd\mu(x)\dd\mu(y)
  \end{equation}
  leads to the \emph{McKean-Vlasov particle system} without external potential
  \begin{equation*}
    \dd X_i(t) = -\frac{1}{n}\sum_{j=1}^n\nabla W\left(X_i(t)-X_j(t)\right)\dd t + \sigma \dd \beta_i(t).
  \end{equation*}
  This particle system arises, for instance, in the probabilistic approximation of the granular media equation~\cite{BenRoyTalVal98, BenRoyVal98, Mal03, CatGuiMal08}, for which the choice $W(x)=|x|^3$ is of particular physical interest~\cite{BenCagPul97, BenCagCarPul98}.
\end{ex}

\begin{ex}[\RB-model]
  In dimension $d=1$, given a $C^1$ and nonnegative function $B : [0,1] \to \R$ such that
  \begin{equation}\label{eq:RBti}
    B(0)=B(1)=0,
  \end{equation}
  the energy functional
  \begin{equation}\label{eq:RBIntEne}
    \IntEne[\mu] = \int_{x \in \R} B(F_\mu(x))\dd x,
  \end{equation}
  where $F_\mu$ denotes the cumulative distribution function of $\mu$, is associated with the system of \emph{rank-based interacting diffusions}
  \begin{equation}\label{eq:RBsde}
    \dd X_i(t) = \sum_{k=1}^n b_n(k)\ind{X_i(t) = X_{(k)}(t)}\dd t + \sigma \dd\beta_i(t),
  \end{equation}
  where for all $t \geq 0$, $X_{(1)}(t) \leq \cdots \leq X_{(n)}(t)$ denotes the order statistics of $X_1(t), \ldots, X_n(t)$, and
  \begin{equation*}
    b_n(k) := n\left(B\left(\frac{k}{n}\right)-B\left(\frac{k-1}{n}\right)\right).
  \end{equation*}
  This particle system serves as a model for large equity markets, and is also related to the probabilistic interpretation of nonlinear scalar conservation laws~\cite{Fer02, Rey17}. For the latter reason, we shall call $B$ a \emph{flux function}.
\end{ex}

\begin{rk}[Intersection between both classes of models]\label{rk:MVRB}
  Taking $d=1$ and $W(x)=|x|$ in the \MV-model yields the energy functional
  \begin{equation*}
    \IntEne[\mu] = \frac{1}{2}\iint_{x,y\in\R} |x-y|\dd\mu(x)\dd\mu(y) = \int_{x \in \R} F_\mu(x)(1-F_\mu(x))\dd x,
  \end{equation*}
  so that this model coincides with the \RB-model for $B(u)=u(1-u)$. 
\end{rk}

For both the \MV-model and the \RB-model, it is quickly observed that the centre of mass of the system
\begin{equation*}
  \Xi(t) := \frac{1}{n}\sum_{i=1}^n X_i(t)
\end{equation*}
is a Brownian motion in $\R^d$, which prevents $(X_1(t), \ldots, X_n(t))$ from converging to an equilibrium probability measure. Following the remark made in~\cite{Mal03} for the \MV-model and in~\cite{JouMal08, PalPit08} for the \RB-model, we define the diffusion process $(\tX_1(t), \ldots, \tX_n(t))_{t \geq 0}$ on the linear subspace $\Mdn$ by
\begin{equation}\label{eq:tX}
  \tX_i(t) := X_i(t) - \Xi(t),
\end{equation}
which describes the particle system \emph{seen from its centre of mass}. Under the assumptions of Lemma~\ref{lem:tP}, this process turns out to be reversible with respect to the Gibbs measure $\tP_n$.

\subsection{Free energy and large deviations} Under the assumptions of Lemma~\ref{lem:tP}, the central object of our study is the sequence of probability measures $\tPP_n$ defined by
\begin{equation}\label{eq:tPP}
  \tPP_n := \tP_n \circ \pi_n^{-1},
\end{equation}
which describe the distribution of the empirical measure of the particle system, seen from its centre of mass, at equilibrium. Notice that, for all $n \geq 2$, the restriction of $\pi_n$ to $\Mdn$ defines a continuous mapping from $\Mdn$ to either $\Ps(\R^d)$ or $\tPs_p(\R^d)$, for any $p \geq 1$; in particular, it is measurable for both the topology of weak convergence and the Wasserstein topology. As a consequence, for all $n \geq 2$, the probability measure $\tPP_n$ is well-defined on both the Borel $\sigma$-field of $\Ps(\R^d)$ and the Borel $\sigma$-field of $\tPs_p(\R^d)$. 

In order to study the large deviations of the sequence $\tPP_n$, we first introduce the following two functionals on $\Ps(\R^d)$.

\begin{defi}[Boltzmann's entropy]
  For all $\mu \in \Ps(\R^d)$, we let
  \begin{equation}\label{eq:defEnt}
    \Ent[\mu] := \int_{x \in \R^d} p(x) \log p(x) \dd x
  \end{equation}
  if $\mu \in \Ps_1(\R^d)$ and has a density $p(x)$ with respect to the Lebesgue measure on $\R^d$, and 
  \begin{equation*}
    \Ent[\mu] := +\infty
  \end{equation*}
  otherwise.
\end{defi}

\begin{rk}[On the moment condition]\label{rk:Ent}
  The requirement that $\mu \in \Ps_1(\R^d)$ ensures that the negative part of $p \log p$ is integrable~\cite[Remark~9.3.7, p.~212]{AmbGigSav08}, and therefore ensures that $\Ent[\mu]$ is well-defined as an element of $(-\infty,+\infty]$.
\end{rk}

\begin{defi}[Free energy]
  The \emph{free energy} $\FreEne$ associated with an energy functional $\IntEne : \Ps(\R^d) \to [0,+\infty]$ is defined by
  \begin{equation}\label{eq:FreEne}
    \FreEne[\mu] := \Ent[\mu] + \frac{2}{\sigma^2} \IntEne[\mu],
  \end{equation}
  for all $\mu \in \Ps(\R^d)$.
\end{defi}

\begin{rk}[Physical free energy]
  In statistical physics, $\sigma^2$ is usually assigned the value $2\kB T$, where $\kB$ is the Boltzmann constant and $T>0$ is the temperature, and Boltzmann's entropy is rather defined by $\Ent_{\mathrm{phys}} = -\kB \Ent$. Therefore, to be consistent with the classical definition of the free energy
  \begin{equation*}
    \FreEne_{\mathrm{phys}} = -T \Ent_{\mathrm{phys}} + \IntEne,
  \end{equation*}
  one should rather define the free energy to be worth $\frac{\sigma^2}{2} \Ent + \IntEne$. The difference with~\eqref{eq:FreEne} merely lies in the multiplicative constant, and we shall keep the latter definition as it alleviates some computations throughout the article.
\end{rk}

If the energy functional satisfies the assumptions of Lemma~\ref{lem:tP}, the free energy possesses the following properties.

\begin{lem}[Bounds on the free energy]\label{lem:C}
  Let $\IntEne : \Ps(\R^d) \to [0,+\infty]$ be an energy functional satisfying the assumptions of Lemma~\ref{lem:tP}. 
  \begin{enumerate}[label=(\roman*),ref=\roman*]
    \item\label{it:C:1} There exists $\mu \in \Ps(\R^d)$ such that $\FreEne[\mu] < +\infty$.
    \item\label{it:C:2} $\FreEne$ is bounded from below on $\Ps(\R^d)$.
  \end{enumerate}
\end{lem}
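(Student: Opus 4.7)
The plan is to treat the two claims separately: part~\eqref{it:C:1} by exhibiting an explicit measure, and part~\eqref{it:C:2} by reducing to centered measures via translation invariance and then applying a Gibbs-type lower bound for the entropy.

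For~\eqref{it:C:1}, I would take $\mu$ to be the uniform probability measure on the closed unit ball of $\R^d$. Its support being compact, Assumption~\eqref{ass:F} yields $\IntEne[\mu]<+\infty$; since its density is bounded with compact support, we also have $\mu \in \Ps_1(\R^d)$ and $\Ent[\mu] \in \R$, so that $\FreEne[\mu]<+\infty$.

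For~\eqref{it:C:2}, the first step is to dispose of degenerate cases. If $\mu \not\in \Ps_\ell(\R^d)$, Assumption~\eqref{ass:GC} forces $\IntEne[\mu]=+\infty$; if $\mu \in \Ps_\ell(\R^d)$ carries no Lebesgue density, then $\Ent[\mu]=+\infty$. In either case $\FreEne[\mu]=+\infty$ and the bound is trivial. Hence I may restrict to $\mu \in \Ps_\ell(\R^d)$ with a density. A direct change of variables shows that $\Ent$ is translation invariant, and so is $\IntEne$ by Assumption~\eqref{ass:TI}, so $\FreEne[\mu] = \FreEne[\Tc\mu]$; this reduces the problem to bounding $\FreEne[\tmu]$ from below for centered $\tmu \in \tPs_\ell(\R^d)$ with density. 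The core estimate is then a Gibbs-type entropy bound: introduce, for $a>0$, the probability measure $\nu_a$ on $\R^d$ with density $q_a(x) = c_a\exp(-a|x|^\ell)$, where $c_a>0$ is the normalizing constant. The nonnegativity of the relative entropy $H(\tmu\,|\,\nu_a)$ (i.e.\ Jensen applied to $t\mapsto t\log t$) yields
\begin{equation*}
\Ent[\tmu] \;\geq\; \int_{\R^d} \log q_a \, \dd\tmu \;=\; \log c_a - a\int_{\R^d}|x|^\ell\,\dd\tmu(x),
\end{equation*}
where the last integral is finite since $\tmu \in \Ps_\ell(\R^d)$. Combining this with Assumption~\eqref{ass:GC}, valid because $\tmu$ is centered, gives
\begin{equation*}
\FreEne[\tmu] \;\geq\; \log c_a + \left(\frac{2\kappa_\ell}{\sigma^2} - a\right)\int_{\R^d}|x|^\ell\,\dd\tmu(x).
\end{equation*}
Choosing $a = 2\kappa_\ell/\sigma^2$ cancels the moment term and produces the uniform lower bound $\FreEne[\tmu] \geq \log c_a$, proving~\eqref{it:C:2}.

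The main obstacle is identifying the correct comparison measure: since~\eqref{ass:GC} only controls the $\ell$-th moment of centered measures, one must both pass to $\Tc\mu$ and choose a reference density whose logarithm grows exactly like $-|x|^\ell$, so that the unknown moment of $\tmu$ can be absorbed into the interaction term. All remaining ingredients are standard.
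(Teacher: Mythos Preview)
Your proof is correct and follows essentially the same approach as the paper: for~\eqref{it:C:1} the paper also uses the uniform measure on a compact set (Remark~\ref{rk:Cfinite}), and for~\eqref{it:C:2} the paper likewise reduces to a centered measure by translation invariance, compares $\Ent$ to the relative entropy with respect to a reference measure $\nu^\eta$ of density proportional to $\exp(-2\eta|x|^\ell/\sigma^2)$, and uses Assumption~\eqref{ass:GC} to absorb the resulting $\ell$-th moment into the energy term (inequality~\eqref{eq:borneinfF}). The only cosmetic difference is that you pick the parameter so as to cancel the moment term exactly, whereas the paper keeps a strictly positive coefficient in front of it.
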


\begin{rk}\label{rk:Cfinite}
  It is easily checked that the uniform distribution on any compact set has a finite Boltzmann entropy, which by Assumption~\eqref{ass:F} yields the statement~\eqref{it:C:1} of Lemma~\ref{lem:C}.
\end{rk}

The statement~\eqref{it:C:2} of Lemma~\ref{lem:C} is proved in Subsection~\ref{ss:rate}.

Under the assumptions of Lemma~\ref{lem:C}, we may define
\begin{equation*}
  \FreEne_\star := \inf_{\mu \in \Ps(\R^d)} \FreEne[\mu] \in \R.
\end{equation*}
This quantity is sometimes referred to as \emph{Gibbs' free energy}~\cite{DawGar89}.

Before stating our first result, we introduce two further assumptions on the energy functional $\IntEne$:
\begin{enumerate}[label=(SH),ref=SH]
  \item\label{ass:SH} subhomogeneity: for all $\epsilon \in (0,1)$, for all $\bx \in \Rdn$, $(1-\epsilon)\intene_n(\bx) \geq \intene_n((1-\epsilon)\bx)$;
\end{enumerate}\begin{enumerate}[label=(CC),ref=CC]
  \item\label{ass:CC} chaos compatibility: for all $\mu \in \Ps(\R^d)$, if $(Y_n)_{n \geq 1}$ is a sequence of independent random variables with identical distribution $\mu$ on some probability space $(\Omega, \mathcal{A}, \Pr)$, then
  \begin{equation*}
    \lim_{n \to +\infty} \Exp\left[\intene_n(Y_1, \ldots, Y_n)\right] = \IntEne[\mu].
  \end{equation*}
\end{enumerate}

\begin{rk}[On Assumption~\eqref{ass:SH}]
  Unlike the remainder of the assumptions, Assumption~\eqref{ass:SH} is quite technical and is only employed once in the article, namely in the proof of the exponential estimates of Lemma~\ref{lem:expcontrol}. It may certainly be replaced by a variety of other similar assumptions, as long as they allow to obtain the same exponential estimates, but we believe that the present formulation achieves a reasonable balance between the generality of the models that it covers, and the relative simplicity of the computations that it requires to prove Lemma~\ref{lem:expcontrol}.
\end{rk}

\begin{rk}[On Assumptions~\eqref{ass:CC} and~\eqref{ass:LSC}]\label{rk:CCLSC}
  If $(Y_n)_{n \geq 1}$ is a sequence of independent random variables with identical distribution $\mu \in \Ps(\R^d)$, then by the Glivenko-Cantelli Lemma, the empirical measure $\pi_n(Y_1, \ldots, Y_n)$ converges to $\mu$ in $\Ps(\R^d)$, $\Pr$-almost surely. As a consequence, Assumption~\eqref{ass:LSC} and Fatou's Lemma yield
  \begin{equation*}
    \liminf_{n \to +\infty} \Exp\left[\intene_n(Y_1, \ldots, Y_n)\right] \geq \IntEne[\mu],
  \end{equation*}
  so that Assumption~\eqref{ass:CC} merely involves the limit superior of $\Exp[\intene_n(Y_1, \ldots, Y_n)]$.
\end{rk}

We are now ready to state the first main result of the article. We recall that, on a metric space, a \emph{good rate function} is a proper function with compact level sets, and refer to~\cite{DemZei10} for introductory material on large deviation principles.

\begin{theo}[LDP for $\tPP_n$ in Wasserstein spaces]\label{theo:Wass}
  Let $\IntEne : \Ps(\R^d) \to [0,+\infty]$ be an energy functional satisfying Assumptions~\eqref{ass:TI}, \eqref{ass:F}, \eqref{ass:LSC}, \eqref{ass:GC}, \eqref{ass:SH} and~\eqref{ass:CC}. If the index $\ell \geq 1$ given by Assumption~\eqref{ass:GC} is such that $\ell > 1$, then for all $p \in [1, \ell)$, the sequence $\tPP_n$ satisfies a large deviation principle on $\tPs_p(\R^d)$ with good rate function
  \begin{equation*}
    \tRate[\tmu] := \FreEne[\tmu] - \FreEne_\star.
  \end{equation*}
\end{theo}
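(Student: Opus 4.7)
My plan follows the strategy announced in the introduction: approximate the translation-invariant particle system by one with a small confining external potential, derive an LDP for the approximating system by classical Dawson--Gärtner-type techniques, and then pass to the vanishing-potential limit while exploiting translation invariance to recover $\tPP_n$.

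Concretely, fix a smooth nonnegative confining potential $V : \R^d \to \R$ with growth of order $|x|^\ell$ at infinity, and for $\epsilon > 0$ set $\IntEne^\epsilon[\mu] := \IntEne[\mu] + \epsilon \int V \dd\mu$. Translation invariance is then broken, so the Gibbs density proportional to $\exp(-(2n/\sigma^2)(\intene_n(\bx) + \epsilon \langle V, \pi_n(\bx)\rangle))$ is integrable on $\Rdn$; denote the resulting probability measure by $P_n^\epsilon$ and its push-forward by $\pi_n$ by $\PP_n^\epsilon$. The first step, which should be the content of Section~\ref{s:conf}, is to establish an LDP for $\PP_n^\epsilon$ on $\Ps_p(\R^d)$ with good rate function $\FreEne^\epsilon - \FreEne_\star^\epsilon$, where $\FreEne^\epsilon = \Ent + (2/\sigma^2)\IntEne^\epsilon$. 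The lower bound comes from a Sanov-type density argument with Gaussian-like tilts combined with chaos compatibility \eqref{ass:CC} to identify $\IntEne^\epsilon[\mu]$ as the limit of $\intene_n^\epsilon$; the upper bound uses lower semicontinuity \eqref{ass:LSC} and Varadhan's lemma, with exponential tightness in $\Ps_p(\R^d)$ secured by the growth of $V$ together with subhomogeneity \eqref{ass:SH}, which is invoked precisely to yield the exponential moment control (Lemma~\ref{lem:expcontrol}).

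The second step exploits translation invariance to connect $\tPP_n$ to $\PP_n^\epsilon$. Writing $\bx = \tbx + \zeta(1,\ldots,1)$ with $\tbx \in \Mdn$ and $\zeta \in \R^d$, the density of $P_n^\epsilon$ factorises on $\Mdn \times \R^d$ as $\exp(-(2n/\sigma^2)\intene_n(\tbx))$ times $\exp(-(2\epsilon/\sigma^2)\sum V(\tx_i+\zeta))$, the first factor being independent of $\zeta$. After composition with the continuous centering operator $\Tc : \Ps_p(\R^d) \to \tPs_p(\R^d)$, the contraction principle transfers the LDP for $\PP_n^\epsilon$ into an LDP for $\tPP_n^\epsilon := \Tc_\#\PP_n^\epsilon$ on $\tPs_p(\R^d)$ with rate $\FreEne[\tmu] + (2\epsilon/\sigma^2) \inf_\zeta \int V(x+\zeta)\dd\tmu(x) - \FreEne_\star^\epsilon + \text{const}$, where the translation invariance of $\FreEne$ has been used to eliminate $\zeta$ from the interaction part. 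A careful comparison of the normalising constants of $P_n^\epsilon$ with $\tZ_n$ from Lemma~\ref{lem:tP}, in which the $\zeta$-dependent factor is integrated out, shows on the one hand that $\tPP_n^\epsilon \to \tPP_n$ as $\epsilon \downarrow 0$, and on the other hand that $\FreEne_\star^\epsilon \to \FreEne_\star$.

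To conclude, exponential tightness of $\tPP_n$ on $\tPs_p(\R^d)$ is required: the growth control \eqref{ass:GC} yields an exponential bound on $\int |x|^\ell \dd\pi_n$ under $\tP_n$, and since $p < \ell$ the sublevel sets $\{\tmu : \int |x|^\ell \dd\tmu \leq M\}$ are relatively compact in $\tPs_p(\R^d)$, so that the assumption $\ell > 1$ with $p \in [1,\ell)$ plays a decisive role here. I expect the main technical obstacle to lie in making the passage $\epsilon \downarrow 0$ rigorous at the level of the LDP: convergence of $\tPP_n^\epsilon$ to $\tPP_n$ alone does not transfer LDPs, and one must instead extract matching upper and lower bounds by combining uniform-in-$\epsilon$ exponential tightness with the quantitative comparison of densities obtained from the factorisation above, sending $\epsilon \downarrow 0$ after $n \to +\infty$ and arguing that the $\epsilon$-dependent correction to the rate vanishes pointwise on sublevel sets of $\FreEne$.
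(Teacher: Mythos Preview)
Your proposal is correct and follows essentially the same route as the paper: confine with a potential of order $|x|^\ell$, obtain the LDP for the confined system (this is Proposition~\ref{prop:Dupuis}), contract via $\Tc$ (Corollary~\ref{cor:Contraction}), and then let the confinement vanish using the exponential tilting relation between $\tPP_n$ and $\tPP^\eta_n$ (Lemma~\ref{lem:PPetaPP}) together with Lemma~\ref{lem:expcontrol}. Two minor corrections: first, Assumption~\eqref{ass:SH} is \emph{not} used in the LDP for the confined system---that step needs only \eqref{ass:LSC} and \eqref{ass:CC} via the Dupuis--Ellis weak convergence approach---but is used solely in Lemma~\ref{lem:expcontrol} for the $\eta\downarrow 0$ passage; second, the paper's mechanism for that passage is not exponential tightness of $\tPP_n$ but rather a direct H\"older inequality applied to the tilted representation~\eqref{eq:tPPetatPP}, which separates $\tPP^\eta_n(\tilde B)$ from the $L^q$-norm of the tilting factor and lets one send first $n\to\infty$, then $\eta\downarrow 0$, then the H\"older exponent to $1$ (Lemmas~\ref{lem:ub}--\ref{lem:cvtRate}).
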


Notice that the large deviation principle holds only in Wasserstein topologies with order strictly smaller than the index $\ell$ of Assumption~\eqref{ass:GC}, which for the \MV-model coincides with the order of polynomial growth of the interaction potential $W$. Furthermore, since the Wasserstein topology is stronger than the topology of weak convergence, the Contraction Principle~\cite[Theorem~4.2.1, p.~126]{DemZei10} implies that under the assumptions of Theorem~\ref{theo:Wass}, the large deviation principle for $\tPP_n$ also holds on the space $\Ps(\R^d)$, with good rate function $\Rate$ defined by
\begin{equation}\label{eq:Rate}
  \Rate[\mu] := \begin{cases}
    \tRate[\mu] & \text{if $\mu \in \tPs_1(\R^d)$,}\\
    +\infty & \text{otherwise.}
  \end{cases}
\end{equation}

As far as the role of the topology in the large deviation principle is concerned, a parallel can be drawn with Sanov's Theorem. Indeed, let $\QQ_n$ denote the law of the empirical measure of independent random variables in $\R^d$, with identical distribution $\nu$, where $\nu$ has a density proportional to $\exp(-2|x|^\ell/\sigma^2)$, with $\ell \geq 1$. The standard Sanov Theorem~\cite[Theorem~6.2.10, p.~263]{DemZei10} asserts that the sequence $\QQ_n$ satisfies a large deviation principle on $\Ps(\R^d)$, and it was proved by Wang, Wang and Wu~\cite{WanWanWu10} that, if $\ell > 1$, then the large deviation principle actually holds on $\Ps_p(\R^d)$, for $p \in [1,\ell)$ --- but \emph{not} for $p=\ell$.

Keeping the analogy between Sanov's Theorem and Theorem~\ref{theo:Wass} in mind, one may therefore wonder, if Assumption~\eqref{ass:GC} in the latter theorem is only satisfied with $\ell=1$, whether the large deviation principle continues to hold on $\Ps(\R^d)$, with the rate function $\Rate$ defined by~\eqref{eq:Rate}, for want of holding in a Wasserstein topology. We show that the answer is negative, by exhibiting an example for which the level sets of the function $\Rate$ fail to be compact on $\Ps(\R^d)$, which prevents the large deviation principle from holding. As should be clear from the example, this is related to the lack of continuity of the centering operator $\Tc$ on $\Ps(\R^d)$.

\begin{ex}[Counter-example to Theorem~\ref{theo:Wass} when $\ell=1$]
  We assume that $d=1$ and take the energy functional
  \begin{equation*}
    \IntEne[\mu] = \frac{1}{2}\iint_{x,y \in \R} |x-y|\dd\mu(x)\dd\mu(y)
  \end{equation*}
  of Remark~\ref{rk:MVRB}. It will be checked in Subsection~\ref{ss:RB} that this energy functional satisfies the assumptions of Theorem~\ref{theo:Wass}, except that Assumption~\eqref{ass:GC} is only satisfied with $\ell=1$. This in fact occurs for \emph{any} instance of the \RB-model, and not only for the case $B(u)=u(1-u)$ corresponding to the energy functional chosen here.
  
  Let $\varphi$ be the density of the standard Gaussian distribution on $\R$, and for all $\theta \in (0,1)$, let us define the density
  \begin{equation*}
    p_\theta(x) := (1-\theta)\varphi(x+1) + \theta\varphi(x-(1-\theta)/\theta).
  \end{equation*}
  For all $\theta \in (0,1)$, the probability measure $\mu_\theta$ with density $p_\theta$ is centered, and we have
  \begin{equation*}
    \Ent[\mu_\theta] \leq \Ent[\varphi] < +\infty
  \end{equation*}
  due to the convexity of $r \mapsto r \log r$, while
  \begin{equation*}
    \begin{aligned}
      \IntEne[\mu_\theta] &= \frac{1}{2}\iint_{x,y \in \R^d} |x-y|\dd\mu_\theta(x)\dd\mu_\theta(y)\\
      & \leq \int_{x \in \R} |x|\dd\mu_\theta(x)\\
      & \leq \int_{x \in \R} |x|\varphi(x)\dd x + 2(1-\theta) \leq \sqrt{\frac{2}{\pi}} + 2,
    \end{aligned}
  \end{equation*}
  where we have used the triangle inequality twice. As a consequence, the collection $\{\mu_\theta, \theta \in (0,1)\}$ is contained in a level set of the rate function $\Rate$ defined by~\eqref{eq:Rate}. But on the other hand, $\mu_\theta$ converges weakly, when $\theta$ vanishes, to the Gaussian distribution centered in $-1$, at which $\Rate$ takes the value $+\infty$. Therefore the level sets of $\Rate$ are not closed, whence not compact, in $\Ps(\R^d)$. 
\end{ex}

\subsection{Large deviations in the quotient space} Let us denote by $\bPs(\R^d)$ the orbit space of the group action
\begin{equation*}
  \left\{\begin{array}{ccc}
    \R^d \times \Ps(\R^d) & \to & \Ps(\R^d)\\
    (y,\mu) & \mapsto & \tau_y\mu
  \end{array}\right.
\end{equation*}
and define 
\begin{equation*}
  \rho : \left\{\begin{array}{ccc}
    \Ps(\R^d) & \to & \bPs(\R^d)\\
    \mu & \mapsto & \{\tau_y\mu: y \in \R\}
  \end{array}\right.
\end{equation*}
the associated \emph{orbit map}. The space $\bPs(\R^d)$ is endowed with the quotient topology, which is defined as the strongest topology making the map $\rho$ continuous. It is proved in Appendix~\ref{s:app} that this topology is metrisable. 

If a functional $\GenFun$ on $\Ps(\R^d)$ is translation invariant, then it is constant on orbits and we may define the functional $\bGenFun$ on $\bPs(\R^d)$ by
\begin{equation*}
  \bGenFun[\rho(\mu)] := \GenFun[\mu],
\end{equation*}
for any $\mu \in \Ps(\R^d)$. Under the assumptions of Lemma~\ref{lem:C}, the functionals $\IntEne$, $\Ent$ and $\FreEne$ are translation invariant, and it is immediate that
\begin{equation*}
  \inf_{\bmu \in \bPs(\R^d)} \bFreEne[\bmu] = \FreEne_\star.
\end{equation*}

For all $n \geq 2$, we define the probability measure
\begin{equation}\label{eq:bPP}
  \bPP_n := \tPP_n \circ \rho^{-1}
\end{equation}
on the Borel $\sigma$-field of $\bPs(\R^d)$. The next theorem is the second main result of this article.

\begin{theo}[LDP for $\bPP_n$ in the quotient space]\label{theo:quot}
  Let $\IntEne : \Ps(\R^d) \to [0,+\infty]$ be an energy functional satisfying Assumptions~\eqref{ass:TI}, \eqref{ass:F}, \eqref{ass:LSC}, \eqref{ass:GC}, \eqref{ass:SH} and~\eqref{ass:CC}. The sequence $\bPP_n$ satisfies a large deviation principle on $\bPs(\R^d)$ with good rate function
  \begin{equation*}
    \bRate[\bmu] := \bFreEne[\bmu] - \FreEne_\star.
  \end{equation*}
\end{theo}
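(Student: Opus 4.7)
The plan is to combine Theorem~\ref{theo:Wass} in the case $\ell > 1$ with the general approximation strategy of Sections~\ref{s:conf}--\ref{s:pfmain} for the case $\ell = 1$. Before addressing the LDP bounds, I first verify that $\bRate$ is a good rate function on $\bPs(\R^d)$: lower semicontinuity of $\bFreEne$ on $\bPs(\R^d)$ is equivalent, by the defining property of the quotient topology, to lower semicontinuity of $\FreEne$ on $\Ps(\R^d)$, which follows from Assumption~\eqref{ass:LSC} and the standard lower semicontinuity of $\Ent$; compactness of level sets is obtained by picking centered representatives (well-defined since $\FreEne[\mu]<+\infty$ forces $\mu \in \Ps_\ell(\R^d) \subseteq \Ps_1(\R^d)$), using Assumption~\eqref{ass:GC} to bound their $\ell$-th moments, deducing tightness in $\Ps(\R^d)$, and projecting subsequential weak limits via $\rho$.

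When $\ell > 1$, the LDP follows directly from Theorem~\ref{theo:Wass} by contraction. Indeed, $\bPP_n$ is the push-forward of $\tPP_n$ under $\rho$, restricted to $\tPs_1(\R^d)$, and this restriction is continuous since the Wasserstein topology is finer than the topology of weak convergence, for which $\rho : \Ps(\R^d) \to \bPs(\R^d)$ is continuous by definition of the quotient topology. The contraction principle yields the LDP on $\bPs(\R^d)$ with rate function $\bmu \mapsto \inf\{\tRate[\tmu] : \tmu \in \tPs_1(\R^d),\ \rho(\tmu) = \bmu\}$, and by translation invariance of $\FreEne$, each orbit contains at most one centered representative, so this infimum reduces to $\bRate[\bmu]$.

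The general case $\ell \geq 1$, including the critical case $\ell = 1$ in which Theorem~\ref{theo:Wass} is unavailable, is handled by approximation. For each small $\alpha > 0$, introduce an auxiliary particle system whose energy is modified by a confining external potential (for instance $\alpha \int V\dd\mu$ with $V$ smooth and growing faster than linearly); this breaks translation invariance but produces an equilibrium measure on all of $\Rdn$ whose empirical distribution satisfies a classical Wasserstein-space LDP (available since the growth index of the perturbed system is at least as large as that of $V$). Pushing this LDP forward through $\rho$ yields an LDP for the projected sequence on $\bPs(\R^d)$, with an approximating rate function $\bRate_\alpha$. The argument is closed by letting $\alpha \to 0$, combining: (i) uniform-in-$\alpha$ exponential tightness on $\bPs(\R^d)$, obtained from the exponential estimates of Lemma~\ref{lem:expcontrol} (for which Assumption~\eqref{ass:SH} is used); (ii) a $\Gamma$-convergence statement $\bRate_\alpha \to \bRate$, exploiting that in the quotient space the external-potential contribution vanishes in the limit upon optimal recentering of representatives; and (iii) comparison of $\bPP_n$ with the quotient of the perturbed equilibrium measure, which exploits the translation invariance of the density $\tp_n$ on $\Mdn$.

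The principal obstacle is item (iii), together with the matching lower bound. Transferring a sharp lower bound from the approximating systems to the limit requires constructing, for any $\bmu \in \bPs(\R^d)$ with $\bFreEne[\bmu] < +\infty$, a suitable absolutely continuous perturbation of $\tp_n$ concentrating near the orbit of $\bmu$, and computing the free-energy cost of this perturbation to leading order in $n$. Here Assumption~\eqref{ass:CC} is crucial to identify the limiting energy as $\IntEne[\mu]$, and the absence of a natural Wasserstein framework when $\ell = 1$ means that the approximation must be carried out directly in the quotient space, relying on its metrisability (Appendix~\ref{s:app}) to translate quotient-space neighbourhoods of $\bmu$ into neighbourhoods in $\Ps(\R^d)$ of well-chosen representatives.
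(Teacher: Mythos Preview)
Your broad strategy---approximate by a confined system, contract to $\bPs(\R^d)$, then let the confinement vanish---is exactly the paper's. But your implementation for the critical case $\ell=1$ contains a genuine error, and the description of the lower bound misses the actual mechanism.

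The problem is your choice of a confining potential $V$ growing \emph{faster than linearly}. You motivate this by wanting a Wasserstein LDP for the perturbed system, but this choice destroys both ingredients you rely on. First, the exponential control of Lemma~\ref{lem:expcontrol} needs the interaction $\intene_n$ to dominate the confining term $\hat V^\alpha_n$ uniformly over $\Mdn$ for small $\alpha$; the proof uses Assumption~\eqref{ass:GC}, which only yields $\intene_n(\tbx)\geq\frac{\kappa_\ell}{n}\sum|\tx_i|^\ell$, whereas a faster-than-$|x|^\ell$ potential produces $\hat V^\alpha_n\sim\frac{\alpha}{n}\sum|\tx_i|^{\ell'}$ with $\ell'>\ell$, which cannot be absorbed regardless of how small $\alpha$ is. Second, the convergence of rate functions fails: there exist orbits $\bmu$ with $\bRate[\bmu]<+\infty$ (so every representative lies in $\Ps_\ell(\R^d)$) but with no representative in $\Ps_{\ell'}(\R^d)$, and for these $\bRate_\alpha[\bmu]=+\infty$ for every $\alpha>0$.

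The paper avoids this by taking $V^\eta(x)=\eta|x|^\ell$ with the \emph{same} $\ell$ as in~\eqref{ass:GC}. When $\ell=1$ this gives only a weak-topology LDP for $\PP^\eta_n$ (Proposition~\ref{prop:Dupuis}), but that is all you need: $\rho$ is continuous from $\Ps(\R^d)$ with the weak topology to $\bPs(\R^d)$ by definition of the quotient topology, so the Contraction Principle still yields an LDP for $\bPP^\eta_n$ on $\bPs(\R^d)$ (Corollary~\ref{cor:Contraction}). There is no need to pass through any Wasserstein space. Your case split $\ell>1$ versus $\ell=1$ is therefore unnecessary; the paper handles both uniformly.

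Finally, your description of the lower bound---constructing an absolutely continuous perturbation of $\tp_n$ concentrating near an orbit---is not what is needed and would be considerably harder than the actual argument. The paper uses the exact tilting identity~\eqref{eq:bPPetabPP} together with the pointwise lower bound $\hat V^\eta_n(\tbx)\geq\frac{\sigma^2}{2n}\log\frac{n^{d/\ell}}{z^\eta}$ (Lemma~\ref{lem:hatVeta-bounds}): this immediately gives $\bPP_n(\bar B)\geq\frac{\tZ^\eta_n}{\tZ_n}\cdot\frac{n^{d/\ell}}{z^\eta}\,\bPP^\eta_n(\bar B)$, after which the LDP lower bound for $\bPP^\eta_n$ and Lemma~\ref{lem:expcontrol} (for the ratio $\tZ^\eta_n/\tZ_n$) finish the job. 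No delicate construction is required.
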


Of course, in the case $\ell>1$, Theorem~\ref{theo:quot} can be obtained by contraction from Theorem~\ref{theo:Wass}, but we will not take advantage of this remark and we will rather prove both theorems simultaneously. 

\begin{rk}[Large deviations in $\bPs(\R^d)$]
  Let $\beta$ be a standard $\R^d$-valued Brownian motion, and consider the occupation measure
  \begin{equation*}
    L_t = \frac{1}{t} \int_{s=0}^t \delta_{\beta(s)}\dd s.
  \end{equation*}
  Because of the lack of ergodicity of the Brownian motion, the large deviations of $L_t$, when $t \to +\infty$, are not covered by the standard Donsker-Varadhan theory. Recently, Mukherjee and Varadhan~\cite{MukVar16} introduced a suitable compactification of the space $\bPs(\R^d)$, in which a large deviation principle can be stated for the orbit of $L_t$. This result also allows to get estimates on translation invariant functionals, such as probability measures on the space of sample paths with density proportional to
  \begin{equation*}
    \exp\left(t \iint_{x,y \in \R^d} W(x-y)\dd L_t(x)\dd L_t(y)\right)
  \end{equation*}
  with respect to the Wiener measure, for some interaction potential $W$.
  
  Although we rely on the same idea of working in the orbit space in order to compensate the lack of ergodicity of our original process, our topological construction is quite distinct. In particular, no compactification of the orbit space is required for Theorem~\ref{theo:quot} to hold.
\end{rk}

\subsection{Sketch of the proof of Theorems~\ref{theo:Wass} and~\ref{theo:quot}} Our two main theorems are proved simultaneously. In Section~\ref{s:conf}, we first state a large deviation principle for the law $\PP^\eta_n$ of the empirical measure of a system with energy functional $\IntEne$ and a confining functional $\PinEne^\eta$, the magnitude of which depends on a small parameter $\eta>0$. This result can be considered standard, and our proof closely follows the lines of~\cite[Theorem~1.5]{DupLasRam15}. For consistency when $\eta$ vanishes, we choose the external potential associated with $\PinEne^\eta$ to grow as $|x|^\ell$, where $\ell \geq 1$ is given by Assumption~\eqref{ass:GC}. As a result, the large deviation principle for $\PP^\eta_n$ holds on $\Ps(\R^d)$, and if $\ell>1$, on $\Ps_p(\R^d)$ for any $p \in [1,\ell)$. 

By contraction, we then obtain large deviation principles for the respective pushforward measures $\bPP^\eta_n$ and $\tPP^\eta_n$ of $\PP^\eta_n$ by $\rho$ and $\Tc$, respectively on $\bPs(\R^d)$, and if $\ell>1$, on $\tPs_p(\R^d)$ for any $p \in [1,\ell)$. The end of the proof, detailed in Section~\ref{s:pfmain}, then consists in checking that, when $\eta$ vanishes, $\bPP^\eta_n$ and $\tPP^\eta_n$ provide sufficiently good approximations of $\bPP_n$ and $\tPP_n$, at the level of large deviations. This part can be considered as the main original contribution of the article.

\subsection{Large deviations for the \texorpdfstring{\MV}{MV}-model and the \texorpdfstring{\RB}{RB}-model} We come back to the specific examples of the \MV-model and \RB-model introduced in Subsection~\ref{ss:examples}, and state large deviation principles for these models which come as corollaries of Theorems~\ref{theo:Wass} and~\ref{theo:quot}.

\subsubsection{\MV-model} Let $W : \R^d \to [0,+\infty)$ be an interaction potential which possesses the decomposition
\begin{equation}\label{eq:WsWf}
  W = W^\sharp + W^\flat,
\end{equation}
where the functions $W^\sharp : \R^d \to [0,+\infty)$ and $W^\flat : \R^d \to \R$ satisfy the following respective assumptions.
\begin{enumerate}[label=(\MV-$\sharp$), ref=\MV-$\sharp$]
  \item\label{ass:MVs} The function $W^\sharp$ is even, lower semicontinuous on $\R^d$, there exists $\ell \geq 1$ and $\kappa_\ell > 0$ such that, for all $x \in \R^d$, $W^\sharp(x) \geq 2\kappa_\ell|x|^\ell$, and for all $\epsilon \in (0,1)$, for all $x \in \R^d$, $(1-\epsilon)W^\sharp(x) \geq W^\sharp((1-\epsilon)x)$.
\end{enumerate}\begin{enumerate}[label=(\MV-$\flat$), ref=\MV-$\flat$]
  \item\label{ass:MVf} The function $W^\flat$ is even, continuous on $\R^d$ and, with $\ell \geq 1$ given by Assumption~\eqref{ass:MVs} on $W^\sharp$:
  \begin{itemize}
    \item if $\ell = 1$, then $W^\flat$ is bounded;
    \item if $\ell > 1$, then there exists $\ell' \in [0,\ell)$ such that $W^\flat(x)/(1+|x|^{\ell'})$ is bounded on $\R^d$.
  \end{itemize}
\end{enumerate}
Any polynomial function of $|x|$, with nonnegative but possibly fractional powers, degree larger or equal to $1$, and positive leading coefficient, satisfies this set of assumptions --- up to renormalisation of the constant term in order to ensure nonnegativity. This is in particular the case of the cubic potential $W(x)=|x|^3$ corresponding to the granular media equation~\cite{BenCagPul97, BenCagCarPul98}. However, singular potentials such as those involved in the particle approximation of the Keller-Segel equation~\cite{FouJou15,CatPed16}, or in the study of Coulomb gases~\cite{LebSer15}, do not satisfy our set of assumptions.

\begin{cor}[LDP for the \MV-model]\label{cor:MV}
  Let $W : \R^d \to [0,+\infty)$ be an interaction potential possessing the decomposition~\eqref{eq:WsWf}, with functions $W^\sharp$ and $W^\flat$ satisfying the respective Assumptions~\eqref{ass:MVs} and~\eqref{ass:MVf}. Let us define the energy functional $\IntEne$ by the identity~\eqref{eq:MVIntEne}. The sequence of associated probability measures $\tP_n$ is well-defined, and letting $\tPP_n$, $\bPP_n$ be defined by~\eqref{eq:tPP} and~\eqref{eq:bPP}, respectively, we have the following results.
  \begin{enumerate}[label=(\roman*),ref=\roman*]
    \item The sequence $\bPP_n$ satisfies a large deviation principle on $\bPs(\R^d)$ with good rate function $\bRate$ defined by Theorem~\ref{theo:quot}.
    \item If the index $\ell \geq 1$ of Assumptions~\eqref{ass:MVs} and~\eqref{ass:MVf} is such that $\ell>1$, then for all $p \in [1, \ell)$, the sequence $\tPP_n$ satisfies a large deviation principle on $\tPs_p(\R^d)$ with good rate function $\tRate$ defined by Theorem~\ref{theo:Wass}.
  \end{enumerate}
\end{cor}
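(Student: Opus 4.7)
The plan is to reduce the statement to a direct application of Theorems~\ref{theo:quot} and~\ref{theo:Wass}, by verifying that the energy functional $\IntEne$ defined by~\eqref{eq:MVIntEne} satisfies each of the six Assumptions~\eqref{ass:TI}, \eqref{ass:F}, \eqref{ass:LSC}, \eqref{ass:GC}, \eqref{ass:SH} and~\eqref{ass:CC}, with the same index $\ell \geq 1$ as given by Assumption~\eqref{ass:MVs} on $W^\sharp$. Once this verification is complete, the two statements of the corollary follow at once.

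Four of the six assumptions admit relatively routine verifications. Translation invariance~\eqref{ass:TI} is immediate from the fact that $W(x-y)$ depends only on $x-y$. Chaos compatibility~\eqref{ass:CC} reduces to the direct computation
\begin{equation*}
  \Exp\left[\intene_n(Y_1, \ldots, Y_n)\right] = \frac{n-1}{2n}\iint_{\R^d \times \R^d} W(x-y)\dd\mu(x)\dd\mu(y) + \frac{W(0)}{2n}
\end{equation*}
for $(Y_n)_{n \geq 1}$ iid with distribution $\mu$, which converges to $\IntEne[\mu]$ whether or not the integral is finite. Assumption~\eqref{ass:F} follows from the local boundedness of $W = W^\sharp + W^\flat$: $W^\flat$ is continuous by Assumption~\eqref{ass:MVf}, and $W^\sharp$ is lsc and finite-valued (with $W^\sharp(0)=0$, as one sees by taking $x=0$ in the subhomogeneity condition of~\eqref{ass:MVs}), so $W$ is integrable against $\mu \otimes \mu$ whenever $\mu$ has compact support. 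Lower semicontinuity~\eqref{ass:LSC} on $\Ps(\R^d)$ is obtained by splitting $\IntEne = \IntEne^\sharp + \IntEne^\flat$: the first contribution is lsc by the Portmanteau theorem applied to the lsc nonnegative integrand $(x,y) \mapsto W^\sharp(x-y)$, and the second is handled through the polynomial growth estimate of Assumption~\eqref{ass:MVf} combined with a uniform integrability argument.

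The growth control~\eqref{ass:GC} rests on the pointwise lower bound $W(x) \geq W^\sharp(x) \geq 2\kappa_\ell|x|^\ell$ provided by Assumption~\eqref{ass:MVs}. For any $\tmu \in \tPs_\ell(\R^d)$, Jensen's inequality applied to the convex function $|\cdot|^\ell$ (recall $\ell \geq 1$) combined with the centering condition gives
\begin{equation*}
  \IntEne[\tmu] \geq \kappa_\ell \iint |x-y|^\ell \dd\tmu(x)\dd\tmu(y) \geq \kappa_\ell \int_{\R^d} \left|x - \int y\dd\tmu(y)\right|^\ell \dd\tmu(x) = \kappa_\ell \int_{\R^d} |x|^\ell \dd\tmu(x).
\end{equation*}
The complementary fact $\IntEne[\mu]=+\infty$ for $\mu \not\in \Ps_\ell(\R^d)$ follows from the same pointwise lower bound via a standard triangle inequality argument.

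The main technical obstacle is the subhomogeneity~\eqref{ass:SH}. The $W^\sharp$-contribution is handled pointwise: from Assumption~\eqref{ass:MVs}, $(1-\epsilon)W^\sharp(x_i-x_j) \geq W^\sharp((1-\epsilon)(x_i-x_j))$, and summing over pairs $(i,j)$ yields the required inequality for the $W^\sharp$-part of $\intene_n$. For the $W^\flat$-contribution, the subhomogeneity defect $(1-\epsilon)W^\flat(x) - W^\flat((1-\epsilon)x)$ is controlled, thanks to Assumption~\eqref{ass:MVf}, by a quantity of growth at most $|x|^{\ell'}$ with $\ell' < \ell$ (or a constant when $\ell=1$), which by Young's inequality is absorbed by the positive slack produced by the pointwise subhomogeneity of the polynomially dominant $W^\sharp$. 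This gives the subhomogeneity of $\intene_n$ as a whole, and the corollary then follows directly from Theorems~\ref{theo:quot} and~\ref{theo:Wass}.
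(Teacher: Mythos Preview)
Your strategy---verify all six assumptions for the full energy functional $\IntEne$ and then invoke Theorems~\ref{theo:Wass} and~\ref{theo:quot}---does not go through, because in general $\IntEne$ fails both~\eqref{ass:GC} and~\eqref{ass:SH}. The paper takes a genuinely different route: it verifies the assumptions only for the functional $\IntEne^\sharp$ associated with $W^\sharp$ (Lemma~\ref{lem:MV-Ws}), obtains the large deviation principle for the corresponding sequences $\bPP^\sharp_n$, $\tPP^\sharp_n$, and then transfers it to $\bPP_n$, $\tPP_n$ by the Laplace--Varadhan Lemma, using that $\dd\tPP_n/\dd\tPP^\sharp_n \propto \exp(-\frac{2n}{\sigma^2}\IntEne^\flat)$ and checking the exponential moment condition~\eqref{eq:unifexpint}.

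For~\eqref{ass:GC}: your inequality $W(x) \geq W^\sharp(x)$ is equivalent to $W^\flat \geq 0$, which is nowhere assumed. In fact~\eqref{ass:GC} can fail outright for $\IntEne$: take $W^\sharp(x)=2|x|^2$ and a bounded, even, continuous $W^\flat$ with $W^\flat(x)=-2|x|^2$ for $|x|\le 1/2$, extended so that $W=W^\sharp+W^\flat\ge 0$. Then $W\equiv 0$ on $\{|x|\le 1/2\}$, and for the centered measure $\tmu=\frac12(\delta_{-a}+\delta_a)$ with $0<a\le 1/4$ one gets $\IntEne[\tmu]=0$ while $\int|x|^2\dd\tmu=a^2>0$.

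For~\eqref{ass:SH}: the ``positive slack'' from $W^\sharp$ that you invoke may simply not exist. Assumption~\eqref{ass:MVs} only gives $(1-\epsilon)W^\sharp(x)\ge W^\sharp((1-\epsilon)x)$ with possible equality; for $W^\sharp(x)=2\kappa_1|x|$ (so $\ell=1$) the slack is identically zero, and adding any constant $W^\flat\equiv c>0$ (bounded, so~\eqref{ass:MVf} is satisfied) yields $(1-\epsilon)W(x)-W((1-\epsilon)x)=-\epsilon c<0$. Even when the slack is nonzero, it is of order $|x|^\ell$ near the origin and therefore cannot absorb a $W^\flat$-defect of order $|x|^{\ell'}$ with $\ell'<\ell$; and already at $\bx=0$ the inequality $(1-\epsilon)\intene_n(0)\ge \intene_n(0)$ forces $W(0)=0$, i.e.\ $W^\flat(0)=0$, which is not assumed. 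These obstructions are exactly why the paper isolates $W^\sharp$ and handles $W^\flat$ as a tilting perturbation rather than as part of the energy functional.
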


The proof of Corollary~\ref{cor:MV} is presented in Subsection~\ref{ss:MV}. If $W^\flat \equiv 0$, then the energy functional $\IntEne$ actually satisfies the assumptions of Theorems~\ref{theo:Wass} and~\ref{theo:quot}, so that the result of Corollary~\ref{cor:MV} is straightforward. The case $W^\flat \not\equiv 0$ is treated as a perturbation of the previous case, thanks to the Laplace-Varadhan Lemma.

\subsubsection{\RB-model} Let $B : [0,1] \to [0,+\infty)$ be a $C^1$ flux function satisfying the condition~\eqref{eq:RBti}, which ensures that the energy functional $\IntEne$ defined by~\eqref{eq:RBIntEne} is not identically equal to $+\infty$. It is known~\cite{Rey17} that the condition 
\begin{equation}\label{eq:Oleinik}
  \forall u \in (0,1), \qquad B(u) > 0,
\end{equation}
which is called \emph{Oleinik's entropy condition} in the vocabulary of conservation laws, ensures the ergodicity of the centered particle system introduced in Subsection~\ref{ss:examples}. The combination of~\eqref{eq:RBti} and~\eqref{eq:Oleinik} implies that $B'(0) \geq 0 \geq B'(1)$, and the stronger condition
\begin{equation}\label{eq:Lax}
  B'(0) > 0 > B'(1),
\end{equation}
which is called \emph{Lax' entropy condition}, generally ensures better ergodic properties of both the particle system and its mean-field limit~\cite{JouMal08,JouRey13,JouRey15,Rey15}. Notice that if $B$ is assumed to be concave, then Oleinik's and Lax' conditions are equivalent, and hold as soon as $B$ is not identically zero.

We shall check in Subsection~\ref{ss:RB} that this set of conditions implies that the energy functional $\IntEne$ satisfies the assumptions of Theorem~\ref{theo:quot}, and in particular Assumption~\eqref{ass:GC} with $\ell=1$, which allows to define the sequence $\bPP_n$ associated with $\IntEne$ and leads to the following result.

\begin{cor}[LDP for the \RB-model]\label{cor:RB}
  Let $B : [0,1] \to [0,+\infty)$ be a $C^1$ flux function satisfying the conditions~\eqref{eq:RBti}, \eqref{eq:Oleinik} and~\eqref{eq:Lax}. Let $\IntEne$ be the energy functional associated with $B$ by~\eqref{eq:RBIntEne}. The sequence $\bPP_n$ associated with $\IntEne$ is well-defined, and it satisfies a large deviation principle on $\bPs(\R)$, with good rate function $\bRate$ given by Theorem~\ref{theo:quot}.
\end{cor}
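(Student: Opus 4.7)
The plan is to verify that the energy functional $\IntEne[\mu] = \int_\R B(F_\mu(x))\dd x$ satisfies the six assumptions of Theorem~\ref{theo:quot}, namely \eqref{ass:TI}, \eqref{ass:F}, \eqref{ass:LSC}, \eqref{ass:GC}, \eqref{ass:SH}, and \eqref{ass:CC}, whereupon the corollary is immediate.

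Four of these assumptions are direct. The identity $F_{\tau_y\mu}(\cdot) = F_\mu(\cdot - y)$ together with a change of variables yields \eqref{ass:TI}. When $\mu$ is supported in $[-R,R]$, the boundary condition~\eqref{eq:RBti} forces $B(F_\mu(x))$ to vanish for $|x|>R$ while continuity of $B$ on the compact $[0,1]$ bounds the integrand by $\|B\|_\infty$ on $[-R,R]$, giving \eqref{ass:F}. Weak convergence $\mu_n\to\mu$ implies pointwise convergence $F_{\mu_n}(x)\to F_\mu(x)$ at every continuity point of $F_\mu$, hence Lebesgue-almost everywhere, so Fatou's Lemma applied to the nonnegative integrand $B(F_{\mu_n}(x))$ yields \eqref{ass:LSC}. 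Finally, writing $F_{\pi_n((1-\epsilon)\bx)}(x) = F_{\pi_n(\bx)}(x/(1-\epsilon))$ and performing the change of variable $y=x/(1-\epsilon)$ gives $\intene_n((1-\epsilon)\bx) = (1-\epsilon)\intene_n(\bx)$, so \eqref{ass:SH} holds as an equality.

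The quantitative content of \eqref{ass:GC} with $\ell=1$ rests on the pointwise sandwich
\begin{equation*}
    c\,[u\wedge(1-u)] \leq B(u) \leq C\,[u\wedge(1-u)], \qquad u\in[0,1],
\end{equation*}
for some constants $c,C>0$. Indeed, by the $C^1$ hypothesis and~\eqref{eq:Lax}, the ratio $B(u)/[u\wedge(1-u)]$ extends continuously to $[0,1]$ with strictly positive endpoint values $B'(0)$ and $-B'(1)$, and by~\eqref{eq:Oleinik} it remains strictly positive on $(0,1)$, hence is bounded above and below. A Fubini computation then shows that for any $\mu\in\Ps(\R)$ with a median $m$,
\begin{equation*}
    \int_\R \min(F_\mu(x),1-F_\mu(x))\dd x = \int_\R |x-m|\dd\mu(x).
\end{equation*}
For a centered $\tmu\in\tPs_1(\R)$ the inequality $|m|=|\Exp_\tmu[X-m]|\leq\Exp_\tmu[|X-m|]$ combined with the triangle inequality gives $\int|x|\dd\tmu \leq 2\int|x-m|\dd\tmu$, hence $\IntEne[\tmu] \geq (c/2)\int|x|\dd\tmu$; moreover the same identity forces $\IntEne[\mu]=+\infty$ as soon as $\mu\not\in\Ps_1(\R)$.

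For chaos compatibility~\eqref{ass:CC}, the binomial distribution of $nF_{\pi_n(Y_1,\ldots,Y_n)}(x)$ yields almost sure convergence $F_{\pi_n(Y)}(x)\to F_\mu(x)$, and continuity of $B$ together with bounded convergence gives $\Exp[B(F_{\pi_n(Y)}(x))]\to B(F_\mu(x))$ pointwise in $x$. Combining the upper bound $B(u)\leq C\,[u\wedge(1-u)]$ with Jensen's inequality applied to the concave function $u\mapsto u\wedge(1-u)$ produces the majorant $\Exp[B(F_{\pi_n(Y)}(x))]\leq C\,\min(F_\mu(x),1-F_\mu(x))$, which is Lebesgue-integrable precisely when $\mu\in\Ps_1(\R)$; dominated convergence then yields \eqref{ass:CC} in that case, while the complementary case $\mu\not\in\Ps_1(\R)$ is covered by Remark~\ref{rk:CCLSC} since the limit superior required there is trivially bounded by $\IntEne[\mu]=+\infty$. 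The main obstacle is \eqref{ass:GC}: obtaining the sandwich bound on $B$ requires the full strength of both Lax' and Oleinik's conditions, and the passage from the median-centered functional $\int|x-m|\dd\tmu$ to the mean-centered $\int|x|\dd\tmu$ uses the standard inequality relating median and mean for integrable centered measures. Note that this yields $\ell=1$, which is precisely why Theorem~\ref{theo:Wass} cannot be invoked for the \RB-model and one must resort to the orbit-space formulation of Theorem~\ref{theo:quot}.
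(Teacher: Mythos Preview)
Your verification of the six assumptions is correct, and the overall strategy is exactly that of the paper: check \eqref{ass:TI}--\eqref{ass:CC} and invoke Theorem~\ref{theo:quot}. Your treatment of \eqref{ass:TI}, \eqref{ass:F}, \eqref{ass:LSC} matches the paper's, and for \eqref{ass:SH} you reach the same equality $(1-\epsilon)\intene_n(\bx)=\intene_n((1-\epsilon)\bx)$ via a CDF scaling argument rather than the paper's explicit formula $\intene_n(\bx) = -\tfrac{1}{n}\sum_k b_n(k)x_{(k)}$.

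For \eqref{ass:GC} and \eqref{ass:CC} you take a genuinely different route. The paper bounds $B(u)\geq 2\kappa\,u(1-u)$ with $\kappa=\inf_{u}\tfrac{B(u)}{2u(1-u)}$, recognises $\int F_\mu(1-F_\mu)$ as the \MV-energy with $W(x)=|x|$, and then appeals to Lemma~\ref{lem:MV-Ws} (the Jensen argument $\int|x|^\ell\dd\tmu \leq \iint|x-y|^\ell\dd\tmu\dd\tmu$) to obtain \eqref{ass:GC}. You instead sandwich $B$ by $u\wedge(1-u)$, identify $\int\min(F_\mu,1-F_\mu)$ with $\int|x-m|\dd\mu$ via Fubini, and compare median to mean; this is more self-contained as it avoids the detour through the \MV-model. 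For \eqref{ass:CC}, the paper exploits the Lipschitz bound $|\IntEne[\pi_n]-\IntEne[\mu]|\leq \|B'\|_\infty\,\Ws_1(\pi_n,\mu)$ and the known $L^1$ convergence of $\Ws_1(\pi_n,\mu)$, whereas you use the upper half of your sandwich together with Jensen for the concave map $u\mapsto u\wedge(1-u)$ to produce an $n$-uniform, $x$-integrable majorant and conclude by dominated convergence. Both arguments are clean; the paper's is shorter once one has Lemma~\ref{lem:MV-Ws} and the $\Ws_1$ identity in hand, while yours is independent of those auxiliary results and makes the role of Lax' condition in controlling the tails of $B(F_\mu)$ more transparent.
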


In mathematical finance, systems of rank-based interacting diffusions are employed to model the evolution of the logarithmic capitalisations of stocks on an equity market~\cite{Fer02,BanFerKar05,JouRey15}. In Subsection~\ref{ss:SPT}, we present an application of Corollary~\ref{cor:RB} to the study of atypical capital distribution in this framework.

\section{Large deviations with a small external potential}\label{s:conf}

Throughout this section, $\IntEne : \Ps(\R^d) \to [0,+\infty]$ is an energy functional satisfying Assumptions~\eqref{ass:TI}, \eqref{ass:F}, \eqref{ass:LSC}, \eqref{ass:GC} and~\eqref{ass:CC}, and $\ell \geq 1$ is the index given by Assumption~\eqref{ass:GC}. We do not repeat these assumptions in the statements of our results.

We first introduce a few notations. For all $\eta > 0$, we define
\begin{equation}\label{eq:Veta}
  V^\eta(x) := \eta |x|^\ell,
\end{equation}
for all $x \in \R^d$, and let
\begin{equation}\label{eq:PinEneeta}
  \PinEne^\eta[\mu] := \int_{x \in \R^d} V^\eta(x)\dd\mu(x) \in [0,+\infty]
\end{equation}
for all $\mu \in \Ps(\R^d)$, as well as
\begin{equation*}
  V^\eta_n(\bx) := \PinEne^\eta[\pi_n(\bx)]
\end{equation*}
for all $\bx \in \Rdn$. Let 
\begin{equation}\label{eq:zeta}
  z^\eta := \int_{x \in \R^d} \exp\left(-\frac{2}{\sigma^2}V^\eta(x)\right) \dd x \in (0,+\infty),
\end{equation}
and let $\nu^\eta$ be the probability measure on $\R^d$ with density $(z^\eta)^{-1}\exp(-2V^\eta/\sigma^2)$ with respect to the Lebesgue measure on $\R^d$. 

\subsection{Relative entropy and Sanov's Theorem} We recall that the relative entropy of $\mu \in \Ps(\R^d)$ with respect to $\nu \in \Ps(\R^d)$ is defined by 
\begin{equation}\label{eq:RelEnt}
  \RelEnt[\mu|\nu] := \begin{cases}
    \displaystyle\int_{\R^d} \frac{\dd \mu}{\dd \nu} \log\left(\frac{\dd \mu}{\dd \nu}\right) \dd \nu & \text{if $\mu \ll \nu$,}\\
    +\infty & \text{otherwise}.
  \end{cases}
\end{equation}

The following lemma is straightforward.

\begin{lem}[From Boltzmann's entropy to relative entropy]\label{lem:entrelent}
  For all $\mu \in \Ps(\R^d)$,
  \begin{equation}\label{eq:entrelent}
    \RelEnt[\mu|\nu^\eta] = \Ent[\mu] + \frac{2}{\sigma^2}\PinEne^\eta[\mu] + \log z^\eta.
  \end{equation}
\end{lem}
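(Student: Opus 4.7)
The statement is an identity, so the approach is a direct computation of the Radon--Nikodym derivative $\dd\mu/\dd\nu^\eta$, with separate handling of the pathological cases where one or both sides are infinite. I would split the argument according to whether $\mu$ admits a density with respect to Lebesgue measure on $\R^d$.

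\emph{Main case.} Assume $\mu$ has a density $p$ with respect to Lebesgue measure, and moreover $\mu \in \Ps_1(\R^d)$. Since $\nu^\eta$ also has density $(z^\eta)^{-1}\exp(-2V^\eta/\sigma^2)$, which is strictly positive on $\R^d$, the measure $\mu$ is absolutely continuous with respect to $\nu^\eta$ and
\begin{equation*}
  \frac{\dd\mu}{\dd\nu^\eta}(x) = z^\eta\, p(x)\, \exp\left(\frac{2}{\sigma^2} V^\eta(x)\right).
\end{equation*}
Taking the logarithm and integrating against $\mu$, the definition~\eqref{eq:RelEnt} yields
\begin{equation*}
  \RelEnt[\mu|\nu^\eta] = \log z^\eta + \int_{\R^d} p(x) \log p(x) \dd x + \frac{2}{\sigma^2} \int_{\R^d} V^\eta(x) \dd\mu(x),
\end{equation*}
which is exactly~\eqref{eq:entrelent}. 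The additive splitting is legitimate because, as recalled in Remark~\ref{rk:Ent}, the negative part of $p \log p$ is integrable as soon as $\mu \in \Ps_1(\R^d)$, while $V^\eta \geq 0$, so each of the three summands is well-defined in $(-\infty, +\infty]$.

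\emph{Pathological cases.} If $\mu$ has no Lebesgue density, then $\mu$ is not absolutely continuous with respect to $\nu^\eta$, so $\RelEnt[\mu|\nu^\eta]=+\infty$ by definition, while $\Ent[\mu] = +\infty$ as well; the identity becomes $+\infty = +\infty$. If $\mu$ has a density but $\mu \notin \Ps_1(\R^d)$, then $\Ent[\mu] = +\infty$ by convention; since $\ell \geq 1$ we also have $\int |x|^\ell \dd\mu = +\infty$ (controlling $|x|$ by $|x|^\ell + 1$ outside the unit ball), so $\PinEne^\eta[\mu] = +\infty$ and the right-hand side of~\eqref{eq:entrelent} is $+\infty$.

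\emph{Potential obstacle.} The only mildly delicate point is to check that $\RelEnt[\mu|\nu^\eta] = +\infty$ in this last sub-case, since one cannot rely on Remark~\ref{rk:Ent} any more. I would close this loophole with the standard entropy inequality: for any $\alpha > 0$ small enough that $\int \exp(\alpha|x|) \dd\nu^\eta(x) < +\infty$ (possible because $V^\eta(x) = \eta |x|^\ell$ with $\ell \geq 1$),
\begin{equation*}
  \alpha \int_{\R^d} |x| \dd\mu(x) \leq \RelEnt[\mu|\nu^\eta] + \log \int_{\R^d} \exp(\alpha|x|) \dd\nu^\eta(x).
\end{equation*}
Since the left-hand side is infinite while the last term on the right is finite, $\RelEnt[\mu|\nu^\eta] = +\infty$. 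This completes the verification in all cases and yields the lemma.
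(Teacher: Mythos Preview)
Your proof is correct and is essentially what the paper leaves implicit: the paper states the lemma as ``straightforward'' without proof, and only remarks afterwards that the identity holds in $[0,+\infty]$ with the convention that $\RelEnt[\mu|\nu^\eta]=+\infty$ whenever $\mu \notin \Ps_\ell(\R^d)$. Your careful case analysis, including the Donsker--Varadhan argument to force $\RelEnt[\mu|\nu^\eta]=+\infty$ when $\mu \notin \Ps_1(\R^d)$, fills in exactly the details the paper omits.
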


The identity~\eqref{eq:entrelent} holds in $[0,+\infty]$, in the sense that if $\mu \not\in \Ps_\ell(\R^d)$, then $\RelEnt[\mu|\nu]=+\infty$; while if $\mu \in \Ps_\ell(\R^d)$, then $\Ent[\mu]$ and $\RelEnt[\mu|\nu]$ are simultaneously finite or equal to $+\infty$.

With the notations introduced above, let us define 
\begin{equation*}
  Q^\eta_n := (\nu^\eta)^{\otimes n}, \qquad \QQ^\eta_n := Q^\eta_n \circ \pi_n^{-1}.
\end{equation*}

\begin{prop}[Sanov's Theorem]\label{prop:Sanov}
  For all $\eta > 0$, the sequence $\QQ^\eta_n$ satisfies a large deviation principle on $\Ps(\R^d)$, with good rate function $\RelEnt[\cdot|\nu^\eta]$. If $\ell > 1$, the large deviation principle holds on $\Ps_p(\R^d)$ for all $p \in [1, \ell)$, with the same rate function.
\end{prop}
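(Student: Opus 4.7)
The plan is to invoke two classical results, one after the other. The first assertion---the large deviation principle on $\Ps(\R^d)$ equipped with the topology of weak convergence---is precisely the standard Sanov Theorem applied to the i.i.d.\ sequence of common law $\nu^\eta$, as stated for instance in \cite[Theorem~6.2.10]{DemZei10}. No verification is required since $\nu^\eta$ is a bona fide probability measure on $\R^d$ and the relative entropy $\RelEnt[\cdot|\nu^\eta]$ is automatically a good rate function for this topology.

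For the strengthening to $\Ps_p(\R^d)$ endowed with the Wasserstein topology of order $p$ when $\ell>1$ and $p \in [1,\ell)$, I would quote directly the refinement of Sanov's Theorem due to Wang, Wang and Wu~\cite{WanWanWu10}. Their result asserts that the LDP persists on $\Ps_p(\R^d)$ provided the reference measure has a finite exponential moment of order $p$, i.e.\ there exists $\alpha>0$ such that
\begin{equation*}
  \int_{x \in \R^d} \exp\left(\alpha |x|^p\right) \dd\nu^\eta(x) < +\infty.
\end{equation*}
By the explicit form of $\nu^\eta$, namely $\dd\nu^\eta/\dd x = (z^\eta)^{-1}\exp(-2\eta|x|^\ell/\sigma^2)$ with $z^\eta$ given by~\eqref{eq:zeta}, the integral above is
\begin{equation*}
  \frac{1}{z^\eta}\int_{x \in \R^d} \exp\left(\alpha |x|^p - \frac{2\eta}{\sigma^2}|x|^\ell\right) \dd x,
\end{equation*}
which is finite for every $\alpha>0$ as soon as $p<\ell$, because the term $|x|^\ell$ dominates $\alpha|x|^p$ at infinity. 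This is the only point that needs checking.

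The rate function on the finer space is then necessarily $\RelEnt[\cdot|\nu^\eta]$ again. Indeed, the inclusion $\Ps_p(\R^d) \hookrightarrow \Ps(\R^d)$ is continuous from the Wasserstein topology to the weak topology, so the uniqueness of the rate function in an LDP, combined with the compatibility of the two LDPs along this inclusion, forces the two rate functions to coincide on $\Ps_p(\R^d)$. There is no substantive difficulty; the proposition is essentially a quotation, the sole technical content being the verification of the exponential moment bound that unlocks the Wang--Wang--Wu refinement.
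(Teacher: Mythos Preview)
Your proposal is correct and matches the paper's approach exactly: the paper simply cites \cite[Theorem~6.2.10]{DemZei10} for the LDP on $\Ps(\R^d)$ and \cite{WanWanWu10} for the extension to $\Ps_p(\R^d)$, without further argument. Your explicit verification of the exponential moment condition $\int \exp(\alpha|x|^p)\,\dd\nu^\eta(x)<+\infty$ for $p<\ell$ is a useful addition that the paper leaves implicit.
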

The statement of the large deviation principle on $\Ps(\R^d)$ is the usual formulation of Sanov's Theorem~\cite[Theorem~6.2.10, p.~263]{DemZei10}. Its extension to $\Ps_p(\R^d)$ is due to Wang, Wang and Wu~\cite{WanWanWu10}.

\subsection{Large deviations in the interacting case} Owing to Assumption~\eqref{ass:F}, we have
\begin{equation*}
  Z^\eta_n := \int_{\bx \in \Rdn} \exp\left(-\frac{2n}{\sigma^2}\left(V^\eta_n(\bx) + \intene_n(\bx)\right)\right) \dd\bx \in (0,+\infty),
\end{equation*}
and we denote by $P^\eta_n$ the probability measure on $\Rdn$ with density
\begin{equation}\label{eq:peta}
  p^\eta_n(\bx) := \frac{1}{Z^\eta_n}  \exp\left(-\frac{2n}{\sigma^2}\left(V^\eta_n(\bx) + \intene_n(\bx)\right)\right)
\end{equation}
with respect to the Lebesgue measure on $\Rdn$. We finally let 
\begin{equation}\label{eq:PPeta}
  \PP^\eta_n := P^\eta_n \circ \pi^{-1}_n,
\end{equation}
and define the free energy functional $\FreEne^\eta$ by
\begin{equation}\label{eq:FreEneeta}
  \FreEne^\eta[\mu] := \Ent[\mu] + \frac{2}{\sigma^2}\left(\PinEne^\eta[\mu] + \IntEne[\mu]\right).
\end{equation}

Large deviation principles for equilibrium mean-field systems with an external potential may be considered to be standard results in the literature~\cite{Leo87,DawGar89,ChaGozZit14,DupLasRam15}. We however give a complete proof of the next statement, which is adapted to our assumptions on the energy functional $\IntEne$, and follows closely the arguments of Dupuis, Laschos and Ramanan~\cite[Theorem~1.5]{DupLasRam15}.

\begin{prop}[LDP for the sequence $\PP^\eta_n$]\label{prop:Dupuis}
  For all $\eta > 0$, the sequence $\PP^\eta_n$ satisfies a large deviation principle on $\Ps(\R^d)$ with good rate function
  \begin{equation*}
    \Rate^\eta[\mu] := \FreEne^\eta[\mu] - \FreEne^\eta_\star,
  \end{equation*}
  where
  \begin{equation*}
    \FreEne^\eta_\star := \inf_{\mu \in \Ps(\R^d)} \FreEne^\eta[\mu] \in \R.
  \end{equation*}
  If $\ell > 1$, the large deviation principle holds on $\Ps_p(\R^d)$ for all $p \in [1, \ell)$, with the same rate function.
\end{prop}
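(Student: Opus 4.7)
The plan is to realise $P^\eta_n$ as an exponential tilting of the product measure $Q^\eta_n = (\nu^\eta)^{\otimes n}$ and then transfer Sanov's theorem (Proposition~\ref{prop:Sanov}) through a Varadhan-type analysis, adapted to the lower semicontinuity of $\IntEne$. Concretely, the density of $P^\eta_n$ with respect to $Q^\eta_n$ is
\begin{equation*}
  \frac{\dd P^\eta_n}{\dd Q^\eta_n}(\bx) = \frac{(z^\eta)^n}{Z^\eta_n}\exp\left(-\frac{2n}{\sigma^2}\intene_n(\bx)\right),
\end{equation*}
because the external potential exactly builds up the product structure of $Q^\eta_n$. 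By Lemma~\ref{lem:entrelent}, for every $\mu \in \Ps(\R^d)$,
\begin{equation*}
  \RelEnt[\mu|\nu^\eta] + \frac{2}{\sigma^2}\IntEne[\mu] = \FreEne^\eta[\mu] + \log z^\eta,
\end{equation*}
so that the rate function $\RelEnt[\cdot|\nu^\eta]$ of Sanov plus the tilting functional $\tfrac{2}{\sigma^2}\IntEne$ produces $\FreEne^\eta$ up to an additive constant. Taking $\phi\equiv 0$ identifies $\lim_n \tfrac{1}{n}\log(Z^\eta_n/(z^\eta)^n) = -\FreEne^\eta_\star$, which in particular yields $\FreEne^\eta_\star \in \R$ thanks to Lemma~\ref{lem:C}.

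For the upper bound, I would proceed as follows. For any closed set $F \subset \Ps(\R^d)$,
\begin{equation*}
  \PP^\eta_n(F) = \frac{(z^\eta)^n}{Z^\eta_n} \int_{\pi_n(\bx) \in F} \exp\left(-\frac{2n}{\sigma^2}\intene_n(\bx)\right)\dd Q^\eta_n(\bx).
\end{equation*}
Since $\IntEne$ is lower semicontinuous and nonnegative, for every $M<\infty$ the set $\{\IntEne \leq M\}$ is closed, and a routine truncation plus Laplace/Varadhan argument applied to Sanov's LDP (which is valid because $\IntEne$ is bounded below) gives
\begin{equation*}
  \limsup_{n} \frac{1}{n}\log \PP^\eta_n(F) \leq -\inf_{\mu \in F}\left(\RelEnt[\mu|\nu^\eta] + \frac{2}{\sigma^2}\IntEne[\mu]\right) + \FreEne^\eta_\star + \log z^\eta = -\inf_{\mu \in F}\Rate^\eta[\mu].
\end{equation*}
Goodness of the rate function follows from Assumption~\eqref{ass:GC}: it forces $\FreEne^\eta[\mu]$ to control $\int |x|^\ell \dd\mu$, which combined with the coercivity of $\Ent$ on tight sets yields relative compactness of sublevel sets in $\Ps(\R^d)$ (and in $\Ps_p(\R^d)$ for $p<\ell$); lower semicontinuity of $\Rate^\eta$ is inherited from that of $\Ent$, $\PinEne^\eta$ and $\IntEne$.

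The lower bound is the delicate step, because $\IntEne$ is only lower semicontinuous and so Varadhan's classical lemma does not apply directly. Given an open set $O$ and any $\mu \in O$ with $\FreEne^\eta[\mu]<\infty$, the standard tilt is to replace $\nu^\eta$ by $\mu$ itself: letting $Q^\mu_n = \mu^{\otimes n}$ and changing measure,
\begin{equation*}
  \PP^\eta_n(O) \geq \frac{(z^\eta)^n}{Z^\eta_n} \int_{\pi_n(\bx) \in O} \exp\left(-\frac{2n}{\sigma^2}\intene_n(\bx) - n\log\frac{\dd\mu^{\otimes n}}{\dd(\nu^\eta)^{\otimes n}}\right) \dd Q^\mu_n(\bx),
\end{equation*}
so by Jensen on the set $\{\pi_n(\bx)\in O\}$ (whose probability under $\mu^{\otimes n}$ tends to one by the law of large numbers in $\Ps(\R^d)$), I get a lower bound in terms of $\Exp[\intene_n(Y_1,\dots,Y_n)]$ and $\RelEnt[\mu|\nu^\eta]$. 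Assumption~\eqref{ass:CC} (chaos compatibility) is precisely what allows the replacement
\begin{equation*}
  \frac{1}{n}\Exp[n\intene_n(Y_1,\dots,Y_n)] = \Exp[\intene_n(Y_1,\dots,Y_n)] \xrightarrow[n\to\infty]{} \IntEne[\mu],
\end{equation*}
which yields $\liminf_n \tfrac{1}{n}\log\PP^\eta_n(O) \geq -\Rate^\eta[\mu]$; optimising over $\mu \in O$ gives the desired lower bound. A standard truncation to measures with finite relative entropy handles the case $\Rate^\eta[\mu]=\infty$.

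Finally, when $\ell>1$, the same scheme goes through on $\Ps_p(\R^d)$ for $p\in[1,\ell)$: Wang--Wang--Wu's version of Sanov gives the Sanov LDP on $\Ps_p(\R^d)$, the upper bound extends because Wasserstein-closed sets are weak-closed, and exponential tightness in Wasserstein-$p$ is provided by the uniform Gaussian-like tails of $\nu^\eta$ (since $\ell>p$), combined with the Gibbs bound on $P^\eta_n$ coming from Assumption~\eqref{ass:GC}. The main obstacle I anticipate is the lower bound: ensuring that the chaos-compatibility argument can be combined with an open neighbourhood in the Wasserstein topology, which will require first localising to $\mu$ with compact support (where $\IntEne[\mu]<\infty$ by Assumption~\eqref{ass:F}) and then approximating a general $\mu$ with $\FreEne^\eta[\mu]<\infty$ by compactly supported measures along which both $\Ent$ and $\IntEne$ are continuous from above.
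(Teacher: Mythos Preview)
Your approach is essentially the same as the paper's: both realise $\PP^\eta_n$ as an exponential tilting of $\QQ^\eta_n$, use the lower semicontinuity and nonnegativity of $\IntEne$ for the upper bound, and invoke Assumption~\eqref{ass:CC} with $R_n=\mu^{\otimes n}$ for the lower bound, with the Wasserstein extension coming from Wang--Wang--Wu. The paper packages this via the Laplace principle and the Donsker--Varadhan representation formula (following Dupuis--Ellis), whereas you work directly with the LDP upper and lower bounds; these are equivalent formulations.

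Two small points are worth correcting. First, your claim that ``Wasserstein-closed sets are weak-closed'' is backwards: the Wasserstein topology is \emph{stronger}, so it is weak-closed sets that are Wasserstein-closed, not conversely. This slip is harmless because once you have Sanov on $\Ps_p(\R^d)$ via Wang--Wang--Wu, the entire Varadhan-type argument runs intrinsically on $\Ps_p(\R^d)$ and no comparison with the weak topology is needed. Second, in your direct lower bound you condition on $\{\pi_n\in O\}$ and then need $\Exp[\intene_n\mathbf{1}_{\pi_n\in O}]$ and the restricted log-density average to have the right limits; this can be made to work (nonnegativity of $\intene_n$ gives the correct inequality for the energy term, and the log-density term converges in $L^1$ by the law of large numbers), but the paper's Laplace-principle route sidesteps this entirely by bounding the full integral via the representation formula with $R_n=\mu_\epsilon^{\otimes n}$. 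Relatedly, your proposed compact-support approximation for the Wasserstein lower bound is unnecessary: since $\FreEne^\eta[\mu]<\infty$ forces $\mu\in\Ps_\ell(\R^d)\subset\Ps_p(\R^d)$, the empirical measure of i.i.d.\ samples from $\mu$ converges to $\mu$ almost surely in $\Ps_p(\R^d)$ by the strong law of large numbers on $p$-th moments, and the argument goes through directly.
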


Notice that the same arguments as in Remark~\ref{rk:Cfinite} show that $\FreEne^\eta_\star < +\infty$. On the other hand, combining Lemma~\ref{lem:entrelent} with~\eqref{eq:FreEneeta} yields
\begin{equation*}
  \FreEne^\eta[\mu] = \RelEnt[\mu|\nu^\eta] + \frac{2}{\sigma^2}\IntEne[\mu] - \log z^\eta,
\end{equation*}
so that the nonnegativity of both the relative entropy and the energy functional ensure that $\FreEne^\eta_\star > -\infty$.

We may now proceed to the proof of Proposition~\ref{prop:Dupuis}.

\begin{proof}
  The proof relies on the so-called \emph{weak convergence approach to large deviations} developed by Dupuis and Ellis~\cite{DupEll97}. Throughout the proof, we use the notation $\Pst(\R^d)$ to refer to either of the topological spaces $\Ps(\R^d)$ or $\Ps_p(\R^d)$, if $\ell > 1$ and $p \in [1,\ell)$. We recall that both spaces are Polish.
  
  As a first step, we invoke~\cite[Theorem~1.2.3, p.~7]{DupEll97} to reduce the proof of Proposition~\ref{prop:Dupuis} to the verification of the following two facts:
  \begin{enumerate}[label=(\roman*), ref=\roman*]
    \item the function $\Rate^\eta$ has compact level sets on $\Pst(\R^d)$;
    \item for any continuous and bounded functional $\GenFun: \Pst(\R^d) \to \R$, the \emph{Laplace principle}
    \begin{equation}\label{eq:Laplace}
      \lim_{n \to +\infty} -\frac{1}{n} \log \int_{\mu \in \Pst(\R^d)} \exp\left(-n \GenFun[\mu]\right) \dd\PP^\eta_n[\mu] = \inf_{\mu \in \Pst(\R^d)} \left\{\GenFun[\mu]+\Rate^\eta[\mu]\right\}
    \end{equation}
    holds.
  \end{enumerate}

  \emph{Proof of~(i).} Using Lemma~\ref{lem:entrelent}, we rewrite
  \begin{equation}\label{eq:refrate}
    \Rate^\eta[\mu] = \RelEnt[\mu|\nu^\eta] + \frac{2}{\sigma^2}\IntEne[\mu] - (\FreEne^\eta_\star+\log z^\eta),
  \end{equation}
  so that it suffices to show that $\RelEnt[\cdot|\nu^\eta] + \frac{2}{\sigma^2}\IntEne$ has compact level sets. As a consequence of Proposition~\ref{prop:Sanov}, $\RelEnt[\cdot|\nu^\eta]$ is a good rate function on $\Pst(\R^d)$ and therefore has compact level sets. Since the functional $\IntEne$ is nonnegative and satisfies Assumption~\eqref{ass:LSC}, then any level set of $\RelEnt[\cdot|\nu^\eta] + \frac{2}{\sigma^2}\IntEne$ is a closed subset of a level set of $\RelEnt[\cdot|\nu^\eta]$, and therefore is compact.
  
  \emph{Reformulation of~\eqref{eq:Laplace}.} Let us first remark that, on account of the definitions of $\PP^\eta_n$ and $\QQ^\eta_n$,
  \begin{equation}\label{eq:dPPdQQ}
    \frac{\dd \PP^\eta_n}{\dd \QQ^\eta_n}[\mu] \propto \exp\left(-\frac{2n}{\sigma^2}\IntEne[\mu]\right).
  \end{equation}
  As a consequence, the prelimit in~\eqref{eq:Laplace} rewrites
  \begin{equation}\label{eq:Laplace2}
    \begin{aligned}
      & -\frac{1}{n} \log \int_{\mu \in \Pst(\R^d)} \exp\left(-n \GenFun[\mu]\right) \dd\PP^\eta_n[\mu]\\
      & \qquad= -\frac{1}{n} \log \int_{\mu \in \Pst(\R^d)} \exp\left(-n \left(\GenFun[\mu]+\frac{2}{\sigma^2}\IntEne[\mu]\right)\right) \dd\QQ^\eta_n[\mu]\\
      & \qquad\quad + \frac{1}{n} \log \int_{\mu \in \Pst(\R^d)} \exp\left(-\frac{2n}{\sigma^2}\IntEne[\mu]\right) \dd\QQ^\eta_n[\mu],
    \end{aligned}
  \end{equation}
  so that it suffices to compute the limit of the first term in the right-hand side, and deduce the limit of the second by taking $\GenFun \equiv 0$. The computation of such quantities is typically the object of Varadhan's Lemma, which cannot be directly applied here since the functional $\IntEne$ is not assumed to be continuous and bounded.
  
  \emph{Lower bound in the Laplace principle.} Using the fact that $\IntEne$ is bounded from below and satisfies Assumption~\eqref{ass:LSC}, the combination of Proposition~\ref{prop:Sanov} with the variant of Varadhan's Lemma~\cite[Lemma~4.3.6, p.~138]{DemZei10} provides the lower bound
  \begin{equation}\label{eq:lowbLaplace}
    \begin{aligned}
      & \liminf_{n \to +\infty} -\frac{1}{n}\log \int_{\mu\in\Pst(\R^d)} \exp\left(-n \left(\GenFun[\mu]+\frac{2}{\sigma^2}\IntEne[\mu]\right)\right) \dd\QQ^\eta_n[\mu]\\
      & \qquad \geq \inf_{\mu \in \Pst(\R^d)} \left\{\GenFun[\mu] + \frac{2}{\sigma^2} \IntEne[\mu] + \RelEnt[\mu|\nu^\eta]\right\}.
    \end{aligned}
  \end{equation}
  
  \emph{Upper bound in the Laplace principle.} In order to obtain an upper bound of the same order as~\eqref{eq:lowbLaplace}, we first introduce a few notations. For all $\bx \in \Rdn$, we define
  \begin{equation*}
    \Psi_n(\bx) := -n\left(\GenFun[\pi_n(\bx)]+\frac{2}{\sigma^2}\IntEne[\pi_n(\bx)]\right),
  \end{equation*}
  and for all $M \geq 0$,
  \begin{equation*}
    \Psi^M_n(\bx) := \max(\Psi_n(\bx),-M).
  \end{equation*}
  The function $\Psi^M_n$ is measurable and bounded on $\Rdn$, so that the \emph{representation formula}~\cite[Proposition~1.4.2, p.~27]{DupEll97} --- or dually the Donsker-Varadhan variational characterisation of the relative entropy~\cite[Lemma~1.4.3, p.~29]{DupEll97} --- show that, for all probability measures $R_n$ on $\Rdn$,
  \begin{equation*}
    \RelEnt[R_n|Q^\eta_n] \geq \int_{\bx \in \Rdn} \Psi^M_n(\bx) \dd R_n(\bx) - \log \int_{\bx \in \Rdn} \exp\left(\Psi^M_n(\bx)\right) \dd Q^\eta_n(\bx),
  \end{equation*}
  where the definition of the relative entropy of probability measures on $\Rdn$ is the same as~\eqref{eq:RelEnt} for probability measures on $\R^d$. Using the trivial bound $\Psi^M_n(\bx) \geq \Psi_n(\bx)$ on the one hand, and the fact that since $\Psi_n$ is bounded from above on $\Rdn$, the Dominated Convergence Theorem yields
  \begin{equation*}
    \lim_{M \to +\infty} \int_{\bx \in \Rdn} \exp\left(\Psi^M_n(\bx)\right) \dd Q^\eta_n(\bx) = \int_{\bx \in \Rdn} \exp\left(\Psi_n(\bx)\right) \dd Q^\eta_n(\bx)
  \end{equation*}
  on the other hand, we deduce that
  \begin{equation*}
    \RelEnt[R_n|Q^\eta_n] \geq \int_{\bx \in \Rdn} \Psi_n(\bx) \dd R_n(\bx) - \log \int_{\bx \in \Rdn} \exp\left(\Psi_n(\bx)\right) \dd Q^\eta_n(\bx),
  \end{equation*}
  which rewrites
  \begin{equation}\label{eq:RnQn}
    \begin{aligned}
      & -\frac{1}{n}\log \int_{\mu \in \Pst(\R^d)} \exp\left(-n \left(\GenFun[\mu]+\frac{2}{\sigma^2}\IntEne[\mu]\right)\right) \dd\QQ^\eta_n[\mu]\\
      & \qquad \leq \int_{\bx\in\Rdn} \left(\GenFun[\pi_n(\bx)]+\frac{2}{\sigma^2}\IntEne[\pi_n(\bx)]\right)\dd R_n(\bx) + \frac{1}{n}\RelEnt[R_n|Q^\eta_n].
    \end{aligned}
  \end{equation}
  Let $\epsilon > 0$, and let $\mu_\epsilon \in \Pst(\R^d)$ be such that
  \begin{equation*}
    \GenFun[\mu_\epsilon] + \frac{2}{\sigma^2} \IntEne[\mu_\epsilon] + \RelEnt[\mu_\epsilon|\nu^\eta] \leq \inf_{\mu \in \Pst(\R^d)} \left\{\GenFun[\mu] + \frac{2}{\sigma^2} \IntEne[\mu] + \RelEnt[\mu|\nu^\eta]\right\} + \epsilon.
  \end{equation*}
  We evaluate the right-hand side of~\eqref{eq:RnQn} with $R_n = \mu_\epsilon^{\otimes n}$. On the one hand, it is easily seen that
  \begin{equation*}
    \frac{1}{n}\RelEnt[R_n|Q^\eta_n] = \RelEnt[\mu_\epsilon|\nu^\eta],
  \end{equation*}
  while on the other hand,
  \begin{equation*}
    \begin{aligned}
      & \int_{\bx \in \Rdn} \left(\GenFun[\pi_n(\bx)]+\frac{2}{\sigma^2}\IntEne[\pi_n(\bx)]\right)\dd R_n(\bx)\\
      & \qquad = \Exp\left[\GenFun[\pi_n(Y_1, \ldots, Y_n)] + \frac{2}{\sigma^2}\intene_n(Y_1, \ldots, Y_n)\right],
    \end{aligned}
  \end{equation*}
  where $Y_1, \ldots, Y_n$ are independent random variables in $\R^d$ with identical distribution $\mu_\epsilon$ on some probability space $(\Omega,\mathcal{A},\Pr)$. By Assumption~\eqref{ass:CC},
  \begin{equation*}
    \lim_{n \to +\infty} \Exp\left[\frac{2}{\sigma^2}\intene_n(Y_1, \ldots, Y_n)\right] = \IntEne[\mu_\epsilon],
  \end{equation*}
  whereas to justify the convergence of $\Exp[\GenFun[\pi_n(Y_1, \ldots, Y_n)]]$ to $\GenFun[\mu_\epsilon]$, we now show that
  \begin{equation}\label{eq:GlivenkoCantelli}
    \lim_{n \to +\infty} \pi_n(Y_1, \ldots, Y_n) = \mu_\epsilon, \qquad \text{$\Pr$-almost surely in $\Pst(\R^d)$,}
  \end{equation}
  and conclude by the Dominated Convergence Theorem using the fact that $\GenFun$ is continuous and bounded on $\Pst(\R^d)$. 
  \begin{itemize}
    \item If $\Pst(\R^d)$ refers to the topological space $\Ps(\R^d)$, then~\eqref{eq:GlivenkoCantelli} is the Glivenko-Cantelli Lemma. 
    \item If $\ell>1$ and $\Pst(\R^d)$ refers to the topological space $\Ps_p(\R^d)$, with $p \in [1,\ell)$, then by the strong Law of Large Numbers,
    \begin{equation*}
      \lim_{n \to +\infty} \frac{1}{n}\sum_{i=1}^n |Y_i|^p = \int_{x \in \R^d} |x|^p\mu_\epsilon(\dd x), \qquad \text{$\Pr$-almost surely,}
    \end{equation*}
    which, combined with the Glivenko-Cantelli Lemma, implies the $\Pr$-almost sure convergence in $\Ps_p(\R^d)$ of $\pi_n(Y_1, \ldots, Y_n)$ to $\mu_\epsilon$~\cite[Definition~6.8, p.~96]{Vil09}.
  \end{itemize}
  As a consequence, we finally get
  \begin{equation}\label{eq:upbLaplace}
    \begin{aligned}
      & \limsup_{n \to +\infty} -\frac{1}{n}\log \int_{\mu \in \Pst(\R^d)} \exp\left(-n \left(\GenFun[\mu]+\frac{2}{\sigma^2}\IntEne[\mu]\right)\right) \dd\QQ^\eta_n[\mu]\\
      & \qquad \leq  \left\{\GenFun[\mu_\epsilon] + \frac{2}{\sigma^2} \IntEne[\mu_\epsilon] + \RelEnt[\mu_\epsilon|\nu^\eta]\right\}\\
      & \qquad \leq  \inf_{\mu \in \Pst(\R^d)} \left\{\GenFun[\mu] + \frac{2}{\sigma^2} \IntEne[\mu] + \RelEnt[\mu|\nu^\eta]\right\} + \epsilon.
    \end{aligned}
  \end{equation}
  
  \emph{Conclusion of the proof.} Letting $\epsilon \dto 0$ in~\eqref{eq:upbLaplace} and combining the latter inequality with~\eqref{eq:lowbLaplace}, we conclude that
  \begin{equation*}
    \begin{aligned}
      & \lim_{n \to +\infty} -\frac{1}{n}\log \int_{\mu \in \Pst(\R^d)} \exp\left(-n \left(\GenFun[\mu]+\frac{2}{\sigma^2}\IntEne[\mu]\right)\right) \dd\QQ^\eta_n[\mu]\\
      & \qquad = \inf_{\mu \in \Pst(\R^d)} \left\{\GenFun[\mu] + \frac{2}{\sigma^2} \IntEne[\mu] + \RelEnt[\mu|\nu^\eta]\right\}, 
    \end{aligned}
  \end{equation*}
  so that, taking~\eqref{eq:Laplace2} into account,
  \begin{equation*}
    \begin{aligned}
      & \lim_{n \to +\infty} -\frac{1}{n} \log \int_{\mu\in\Pst(\R^d)} \exp\left(-n \GenFun[\mu]\right) \dd\PP^\eta_n[\mu]\\
      & = \inf_{\mu \in \Pst(\R^d)} \left\{\GenFun[\mu] + \frac{2}{\sigma^2} \IntEne[\mu] + \RelEnt[\mu|\nu^\eta]\right\} - \inf_{\mu \in \Pst(\R^d)} \left\{\frac{2}{\sigma^2} \IntEne[\mu] + \RelEnt[\mu|\nu^\eta]\right\}.
    \end{aligned}
  \end{equation*}
  By~\eqref{eq:refrate}, the right-hand side above rewrites $\inf_{\mu \in \Pst(\R^d)} \{\GenFun[\mu] + \Rate^\eta[\mu]\}$, which yields~\eqref{eq:Laplace} and completes the proof.
\end{proof}

\subsection{The measures \texorpdfstring{$\bPP^\eta_n$}{.} and \texorpdfstring{$\tPP^\eta_n$}{.}} Let us define the functional $\vartheta : \Ps(\R^d) \to [0,+\infty]$ by
\begin{equation*}
  \vartheta[\mu] := \inf_{y \in \R^d} \int_{x \in \R^d} |x+y|^\ell \dd\mu(x).
\end{equation*}
Notice that $\vartheta[\mu] < +\infty$ if and only if $\mu \in \Ps_\ell(\R^d)$, and that $\vartheta$ is translation invariant. 

For all $\eta > 0$, we define the probability measures $\bPP^\eta_n$ and $\tPP^\eta_n$, respectively on the Borel $\sigma$-fields of the topological spaces $\bPs(\R^d)$ and $\tPs_p(\R^d)$, for any $p \geq 1$, by the identities
\begin{equation}\label{eq:bPPetatPPeta}
  \bPP^\eta_n := \PP^\eta_n \circ \rho^{-1}, \qquad \tPP^\eta_n := \PP^\eta_n \circ \Tc^{-1}.
\end{equation}

Since the operators $\rho : \Ps(\R^d) \to \bPs(\R^d)$ and $\Tc : \Ps_p(\R^d) \to \tPs_p(\R^d)$ are continuous, the following result is obtained from Proposition~\ref{prop:Dupuis} by means of the Contraction Principle~\cite[Theorem~4.2.1, p.~126]{DemZei10}.

\begin{cor}[LDP for $\bPP^\eta_n$ and $\tPP^\eta_n$]\label{cor:Contraction}
  For all $\eta > 0$, the sequence $\bPP^\eta_n$ satisfies a large deviation principle on $\bPs(\R^d)$ with good rate function
  \begin{equation*}
    \bRate^\eta[\bmu] := \bEnt[\bmu] + \frac{2}{\sigma^2}\left(\eta \bar{\vartheta}[\bmu] + \bIntEne[\bmu]\right) - \FreEne^\eta_\star.
  \end{equation*}
  In addition, if $\ell > 1$, then for all $p \in [1,\ell)$, for all $\eta > 0$, the sequence $\tPP^\eta_n$ satisfies a large deviation principle on $\tPs_p(\R^d)$ with good rate function
  \begin{equation*}
    \tRate^\eta[\tmu] := \Ent[\tmu] + \frac{2}{\sigma^2}\left(\eta \vartheta[\tmu] + \IntEne[\tmu]\right) - \FreEne^\eta_\star.
  \end{equation*}
\end{cor}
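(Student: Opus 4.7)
The plan is to derive both large deviation principles by a direct application of the Contraction Principle~\cite[Theorem~4.2.1, p.~126]{DemZei10} to the LDP for $\PP^\eta_n$ established in Proposition~\ref{prop:Dupuis}. The two ingredients required are the continuity of the contracting maps together with the explicit identification of the contracted rate functions.

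The first ingredient is already at hand. The orbit map $\rho : \Ps(\R^d) \to \bPs(\R^d)$ is continuous by the very definition of the quotient topology, while the continuity of the centering operator $\Tc : \Ps_p(\R^d) \to \tPs_p(\R^d)$ was noted upon its introduction in Section~\ref{s:main}. Applying the Contraction Principle to Proposition~\ref{prop:Dupuis} therefore furnishes an LDP for $\bPP^\eta_n = \PP^\eta_n \circ \rho^{-1}$ on $\bPs(\R^d)$ and, under the hypothesis $\ell > 1$, an LDP for $\tPP^\eta_n = \PP^\eta_n \circ \Tc^{-1}$ on $\tPs_p(\R^d)$ for every $p \in [1,\ell)$, each with good rate function given by the infimum of $\Rate^\eta$ over the relevant preimage.

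It then remains to identify these infima. For $\tmu \in \tPs_p(\R^d)$, the key observation is that $\Tc^{-1}(\tmu) = \{\tau_y\tmu : y \in \R^d\}$: any $\mu$ with $\Tc\mu = \tmu$ satisfies $\mu = \tau_{\xi(\mu)}\tmu$ by the definition of $\Tc$, and conversely $\tau_y\tmu$ has mean $y$ and hence centres back to $\tmu$. Combining the translation invariance of $\Ent$ and of $\IntEne$ (the latter by Assumption~\eqref{ass:TI}) with the elementary identity $\PinEne^\eta[\tau_y\tmu] = \eta \int_{x \in \R^d} |x+y|^\ell \dd\tmu(x)$, the contracted rate function becomes
\begin{equation*}
\Ent[\tmu] + \frac{2}{\sigma^2}\left(\IntEne[\tmu] + \eta \inf_{y \in \R^d} \int_{x \in \R^d}|x+y|^\ell \dd\tmu(x)\right) - \FreEne^\eta_\star,
\end{equation*}
which equals $\tRate^\eta[\tmu]$ by the very definition of $\vartheta$.

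The argument for $\rho$ is entirely parallel: the preimage $\rho^{-1}(\bmu)$ is by construction a translation orbit, and the same translation invariance of $\Ent$, $\IntEne$ and $\vartheta$ permits passing to the quotient in each functional, yielding $\bRate^\eta[\bmu]$. No step in this reasoning presents a genuine obstacle; the corollary is essentially a bookkeeping consequence of Proposition~\ref{prop:Dupuis}. The only mild point to verify is the identification of $\Tc^{-1}(\tmu)$ with the full translation orbit of $\tmu$, which however is immediate once one notes that the centering operation selects the unique centered representative within each orbit.
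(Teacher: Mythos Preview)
Your proof is correct and follows the same approach as the paper: both apply the Contraction Principle from Proposition~\ref{prop:Dupuis} using the continuity of $\rho$ and $\Tc$. Your version is in fact more complete, since you spell out the identification of the contracted rate functions (via the description of the fibres $\Tc^{-1}(\tmu)$ and $\rho^{-1}(\bmu)$ as translation orbits, together with the translation invariance of $\Ent$ and $\IntEne$), whereas the paper leaves this computation implicit.
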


\subsection{Alternative expression for \texorpdfstring{$\tPP^\eta_n$}{.}} We denote by $\tc_n$ the orthogonal projection of $\Rdn$ onto the subspace $\Mdn$, and for all $\eta > 0$, we define the probability measure $\tP^\eta_n$ on $\Rdn$ by
\begin{equation}\label{eq:tPeta}
  \tP^\eta_n := P^\eta_n \circ \tc_n^{-1}.
\end{equation}
Notice that $\tP^\eta_n(\Mdn)=1$. We also define the function $\hat{V}_n^\eta : \Mdn \to \R$ by the identity
\begin{equation}\label{eq:hatVeta}
  \exp\left(-\frac{2n}{\sigma^2}\hat{V}_n^\eta(\tbx)\right) = \int_{\zeta \in \R^d} \exp\left(-\frac{2n}{\sigma^2}V^\eta_n(\tbx+\vec{\zeta})\right)\dd\zeta,
\end{equation}
where for all $\zeta \in \R^d$, we denote by $\vec{\zeta} = (\zeta, \ldots, \zeta)$ the corresponding element of $\Rdn$. 

\begin{lem}[Relation between $\tPP^\eta_n$ and $\tP^\eta_n$]\label{lem:tPPeta-alt}
  For all $\eta > 0$,
  \begin{equation}\label{eq:tZeta}
    \tZ^\eta_n := \int_{\tbx\in\Mdn} \exp\left(-\frac{2n}{\sigma^2}\left(\hat{V}_n^\eta(\tbx) + W_n(\tbx)\right)\right)\dd\tbx \in (0,+\infty),
  \end{equation}
  and the probability measure $\tP^\eta_n$ defined by~\eqref{eq:tPeta} possesses the density
  \begin{equation*}
    \tp^\eta_n(\tbx) := \frac{1}{\tZ^\eta_n} \exp\left(-\frac{2n}{\sigma^2}\left(\hat{V}_n^\eta(\tbx) + W_n(\tbx)\right)\right)
  \end{equation*}
  with respect to the Lebesgue measure $\dd\tbx$ on $\Mdn$. Besides, the probability measure $\tPP^\eta_n$ defined by~\eqref{eq:bPPetatPPeta} satisfies
  \begin{equation}\label{eq:tPPetatPeta}
    \tPP^\eta_n = \tP^\eta_n \circ \pi_n^{-1}.
  \end{equation}
\end{lem}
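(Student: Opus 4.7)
The plan is to prove both parts of the lemma by working in the orthogonal decomposition $\Rdn = \Mdn \oplus \Delta$, where $\Delta := \{\vec\zeta : \zeta \in \R^d\}$ is the diagonal subspace. Every $\bx \in \Rdn$ admits a unique decomposition $\bx = \tbx + \vec\zeta$ with $\tbx = \tc_n(\bx) \in \Mdn$ and $\zeta = \tfrac{1}{n}\sum_i x_i \in \R^d$. Parametrising $\Delta$ by $\zeta \in \R^d$, Fubini on this decomposition yields a positive constant $c_n$ (depending only on $n$ and $d$, in fact $c_n = n^{d/2}$) such that $\dd\bx = c_n \,\dd\tbx\,\dd\zeta$.

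First I would establish the form of the density of $\tP^\eta_n$. For a bounded measurable test function $f$ on $\Mdn$, by definition of $\tP^\eta_n$ and using the change of variables above,
\begin{equation*}
  \int_{\Mdn} f(\tbx)\dd\tP^\eta_n(\tbx) = \frac{c_n}{Z^\eta_n}\int_{\Mdn} f(\tbx) \int_{\R^d} \exp\left(-\frac{2n}{\sigma^2}\bigl(V^\eta_n(\tbx+\vec\zeta) + W_n(\tbx+\vec\zeta)\bigr)\right)\dd\zeta\,\dd\tbx.
\end{equation*}
The crucial ingredient is the translation invariance of $W_n$: since $W_n(\bx) = \IntEne[\pi_n(\bx)]$ and $\pi_n(\bx+\vec\zeta) = \tau_\zeta \pi_n(\bx)$, Assumption~\eqref{ass:TI} gives $W_n(\tbx+\vec\zeta) = W_n(\tbx)$. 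The $W_n$-factor pulls out of the inner integral, which, by the very definition~\eqref{eq:hatVeta} of $\hat V^\eta_n$, equals $\exp(-\tfrac{2n}{\sigma^2}\hat V^\eta_n(\tbx))$. Taking $f \equiv 1$ identifies $\tZ^\eta_n = Z^\eta_n/c_n$, which is strictly positive and finite because $Z^\eta_n \in (0,+\infty)$; comparison of the two expressions then gives the stated density $\tp^\eta_n$.

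For the second statement, the key algebraic identity is the intertwining
\begin{equation*}
  \pi_n \circ \tc_n = \Tc \circ \pi_n \qquad \text{on } \Rdn,
\end{equation*}
which holds because $\pi_n(\bx - \vec\xi) = \tau_{-\xi}\pi_n(\bx)$ with $\xi = \tfrac{1}{n}\sum_i x_i$ being precisely the first moment of $\pi_n(\bx)$. Composing~\eqref{eq:tPeta} and~\eqref{eq:bPPetatPPeta} then gives
\begin{equation*}
  \tP^\eta_n \circ \pi_n^{-1} = P^\eta_n \circ \tc_n^{-1} \circ \pi_n^{-1} = P^\eta_n \circ (\Tc\circ\pi_n)^{-1} = \PP^\eta_n\circ\Tc^{-1} = \tPP^\eta_n,
\end{equation*}
as claimed.

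I expect the main subtlety to be the bookkeeping of the Jacobian $c_n$ in the orthogonal decomposition, but this is inessential: the density $\tp^\eta_n$ is determined by its functional form together with the normalisation, so the constant $\tZ^\eta_n$ is forced to equal $Z^\eta_n/c_n$ regardless of the explicit value of $c_n$. A secondary point to check is that $\hat V^\eta_n(\tbx)$ is well-defined and finite for every $\tbx \in \Mdn$; this is immediate since $V^\eta_n(\tbx+\vec\zeta) = \tfrac{\eta}{n}\sum_i|\tx_i+\zeta|^\ell$ is continuous in $\zeta$ and grows like $\eta|\zeta|^\ell$ at infinity, ensuring integrability of the exponential on $\R^d$.
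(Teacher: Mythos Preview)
Your proposal is correct and follows essentially the same approach as the paper: both use the orthogonal decomposition $\bx = \tbx + \vec\zeta$, invoke Assumption~\eqref{ass:TI} to pull $W_n$ out of the $\zeta$-integral, and identify the remaining integral via the definition of $\hat V^\eta_n$; the intertwining $\pi_n\circ\tc_n = \Tc\circ\pi_n$ is used identically for~\eqref{eq:tPPetatPeta}. The only cosmetic difference is that the paper tests against indicator functions of Borel sets while you test against bounded measurable $f$, and the paper writes the Jacobian explicitly as $\sqrt{n}^d$ where you call it $c_n$.
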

\begin{proof}
  Let $B$ be a Borel subset of $\Rdn$. By~\eqref{eq:tPeta} and~\eqref{eq:peta},
  \begin{equation*}
    \tP^\eta_n(B) = \frac{1}{Z^\eta_n} \int_{\bx\in\Rdn} \ind{\tc_n(\bx)\in B} \exp\left(-\frac{2n}{\sigma^2}\left(V^\eta_n(\bx) + \intene_n(\bx)\right)\right)\dd\bx.
  \end{equation*}
  Any $\bx \in \Rdn$ admits the orthogonal decomposition $\bx = \tbx + \vec{\zeta}$, with $\tbx = \tc_n(\bx) \in \Mdn$ and $\vec{\zeta} = (\zeta, \ldots, \zeta)$ for some $\zeta \in \R^d$. As a consequence, $\tP^\eta_n(B)$ rewrites
  \begin{equation*}
    \begin{aligned}
      & \frac{\sqrt{n}^d}{Z^\eta_n} \iint_{\tbx\in\Mdn,\zeta\in\R^d} \ind{\tbx\in B} \exp\left(-\frac{2n}{\sigma^2}\left(V^\eta_n(\tx_i+\vec{\zeta}) + \intene_n(\tbx+\vec{\zeta})\right)\right)\dd\tbx\dd\zeta\\
      & \qquad = \frac{\sqrt{n}^d}{Z^\eta_n} \int_{\tbx\in\Mdn} \ind{\tbx\in B} \exp\left(-\frac{2n}{\sigma^2}\left(\hat{V}^\eta(\tbx) + \intene_n(\tbx)\right)\right)\dd\tbx,
    \end{aligned}
  \end{equation*}
  where we have used the fact that $\intene_n(\tbx+\vec{\zeta})=\intene_n(\tbx)$, thanks to Assumption~\eqref{ass:TI}, and the definition~\eqref{eq:hatVeta} of $\hat{V}_n^\eta$. This shows~\eqref{eq:tZeta} and the fact that $\tP^\eta_n$ possesses the density $\tp^\eta_n(\tbx)$. Last,~\eqref{eq:tPPetatPeta} follows from the elementary relation $\Tc \circ \pi_n = \pi_n \circ \tc_n$ on $\Rdn$.
\end{proof}

In Section~\ref{s:pfmain}, we shall rely on the following bounds on the function $\hat{V}_n^\eta$.

\begin{lem}[Bounds on $\hat{V}_n^\eta$]\label{lem:hatVeta-bounds}
  Let $n \geq 2$ and $\eta > 0$. For all $\tbx \in \Mdn$,
  \begin{equation*}
    \frac{\sigma^2}{2n}\log\frac{n^{d/\ell}}{z^\eta} \leq \hat{V}_n^\eta(\tbx) \leq \frac{2^{\ell-1}\eta}{n}\sum_{i=1}^n |\tx_i|^\ell + \frac{\sigma^2}{2n}\log\frac{(2^{\ell-1}n)^{d/\ell}}{z^\eta},
  \end{equation*}
  where we recall the definition~\eqref{eq:zeta} of $z^\eta$.
\end{lem}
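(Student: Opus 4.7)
The plan is to estimate the integral in the defining identity~\eqref{eq:hatVeta} from above and below by purely Gaussian-type integrals over $\R^d$ that can be reduced, via a simple change of variable, to the normalising constant $z^\eta$ defined in~\eqref{eq:zeta}. The two bounds on $\hat V_n^\eta$ then follow by taking $-\frac{\sigma^2}{2n}\log$.

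Recall that $V_n^\eta(\bx)=\PinEne^\eta[\pi_n(\bx)]=\frac{\eta}{n}\sum_{i=1}^n|x_i|^\ell$. For the upper bound on $\hat V_n^\eta$, I would use the convexity inequality $|\tx_i+\zeta|^\ell\leq 2^{\ell-1}(|\tx_i|^\ell+|\zeta|^\ell)$, valid since $\ell\geq 1$, to bound $V_n^\eta(\tbx+\vec\zeta)$ from above. Substituting into~\eqref{eq:hatVeta} gives
\begin{equation*}
  \exp\left(-\frac{2n}{\sigma^2}\hat V_n^\eta(\tbx)\right)\geq \exp\left(-\frac{2\cdot 2^{\ell-1}\eta}{\sigma^2}\sum_{i=1}^n|\tx_i|^\ell\right)\int_{\zeta\in\R^d}\exp\left(-\frac{2n\cdot 2^{\ell-1}\eta}{\sigma^2}|\zeta|^\ell\right)\dd\zeta,
\end{equation*}
and the change of variable $u=(2^{\ell-1}n)^{1/\ell}\zeta$ evaluates the remaining integral as $z^\eta/(2^{\ell-1}n)^{d/\ell}$. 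Taking $-\frac{\sigma^2}{2n}\log$ on both sides yields the announced upper bound.

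For the lower bound on $\hat V_n^\eta$, the key observation is that the constraint $\tbx\in\Mdn$ means $\sum_i\tx_i=0$, so Jensen's inequality applied to the convex map $x\mapsto|x|^\ell$ gives
\begin{equation*}
  \frac{1}{n}\sum_{i=1}^n|\tx_i+\zeta|^\ell\geq \left|\frac{1}{n}\sum_{i=1}^n(\tx_i+\zeta)\right|^\ell=|\zeta|^\ell,
\end{equation*}
whence $V_n^\eta(\tbx+\vec\zeta)\geq \eta|\zeta|^\ell$. Plugging this into~\eqref{eq:hatVeta} and performing the change of variable $u=n^{1/\ell}\zeta$ bounds the defining integral from above by $z^\eta/n^{d/\ell}$, and taking $-\frac{\sigma^2}{2n}\log$ gives the claimed lower bound. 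No step is really difficult here; the only mild subtlety is remembering to use the centering condition $\sum_i\tx_i=0$ (without which the lower bound would fail) and keeping track of the scaling factors $n^{d/\ell}$ and $(2^{\ell-1}n)^{d/\ell}$ in the changes of variable.
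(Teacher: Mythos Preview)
Your proof is correct and follows essentially the same approach as the paper: the upper bound via the convexity inequality $|\tx_i+\zeta|^\ell\leq 2^{\ell-1}(|\tx_i|^\ell+|\zeta|^\ell)$ and the lower bound via Jensen's inequality using the centering condition $\sum_i\tx_i=0$. You make the scaling changes of variable explicit where the paper leaves them implicit, but the argument is otherwise identical.
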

\begin{proof}
  The upper bound follows from the convexity inequality
  \begin{equation*}
    |\tx_i + \zeta|^\ell \leq 2^{\ell-1}|\tx_i|^\ell + 2^{\ell-1}|\zeta|^\ell, \qquad 1 \leq i \leq n,
  \end{equation*}
  while the lower bound follows from Jensen's inequality
  \begin{equation*}
    |\zeta|^\ell = \left|\frac{1}{n}\sum_{i=1}^n \tx_i+\zeta\right|^\ell \leq \frac{1}{n}\sum_{i=1}^n |\tx_i+\zeta|^\ell,
  \end{equation*}
  since $\tbx \in \Mdn$.
\end{proof}

\section{Proof of Theorems~\ref{theo:Wass} and~\ref{theo:quot}}\label{s:pfmain}

This section is dedicated to the proof of the large deviation principles contained in Theorems~\ref{theo:Wass} and~\ref{theo:quot}. We first check in Subsection~\ref{ss:rate} that, under the respective assumptions of these theorems, the functionals $\tRate$ and $\bRate$ are good rate functions. In Subsection~\ref{ss:expcomp}, we obtain auxiliary results on the respective approximation of $\tPP_n$ and $\bPP_n$ by the measures $\tPP^\eta_n$ and $\bPP^\eta_n$ introduced in Section~\ref{s:conf}. These results allow us to prove large deviation upper and lower bounds in Subsection~\ref{ss:bounds}, thereby completing the proof of Theorems~\ref{theo:Wass} and~\ref{theo:quot}.

\subsection{Rate functions}\label{ss:rate} The purpose of this subsection is to prove the following result.

\begin{lem}[Goodness of rate functions]\label{lem:goodrate}
  Under the assumptions of Lemma~\ref{lem:C}, the functional $\bFreEne$ has compact level sets on $\bPs(\R^d)$, and if the index $\ell \geq 1$ given by Assumption~\eqref{ass:GC} is such that $\ell > 1$, then for all $p \in [1, \ell)$, the functional $\FreEne$ has compact level sets on $\tPs_p(\R^d)$.
\end{lem}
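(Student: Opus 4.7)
The plan is to compare $\mathscr{F}$ to the relative entropy with respect to a well-chosen reference measure, and then invoke the goodness of the relative entropy as a rate function (via Sanov's theorem, Proposition~\ref{prop:Sanov}).

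The key preliminary computation is the following. Pick any $\eta\in(0,\kappa_\ell)$ and let $\nu^\eta$ be the reference probability measure with density proportional to $\exp(-\frac{2}{\sigma^2}\eta|x|^\ell)$, as in Section~\ref{s:conf}. By Lemma~\ref{lem:entrelent},
\begin{equation*}
  \mathscr{S}[\tmu] = \RelEnt[\tmu|\nu^\eta] - \frac{2\eta}{\sigma^2}\int_{\R^d}|x|^\ell\dd\tmu(x) - \log z^\eta
\end{equation*}
for every $\tmu\in\tPs_\ell(\R^d)$. Combining this identity with Assumption~\eqref{ass:GC}, which yields $\IntEne[\tmu]\geq\kappa_\ell\int|x|^\ell\dd\tmu$ for $\tmu$ centered, gives the pointwise lower bound
\begin{equation*}
  \FreEne[\tmu] \;\geq\; \RelEnt[\tmu|\nu^\eta] \;+\; \frac{2(\kappa_\ell-\eta)}{\sigma^2}\int_{\R^d}|x|^\ell\dd\tmu(x) \;-\; \log z^\eta \;\geq\; -\log z^\eta
\end{equation*}
for all $\tmu\in\tPs_\ell(\R^d)$. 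Extending this to arbitrary $\mu\in\Ps(\R^d)$ by translation invariance of $\FreEne$ (using that $\FreEne[\mu]<+\infty$ forces $\mu\in\Ps_\ell(\R^d)$ by~\eqref{ass:GC}, so that $\Tc\mu$ is well-defined and $\FreEne[\mu]=\FreEne[\Tc\mu]$) simultaneously establishes statement~\eqref{it:C:2} of Lemma~\ref{lem:C}.

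Now I would exploit the same bound to bound level sets. For any $C\in\R$ and any centered $\tmu$ with $\FreEne[\tmu]\leq C$, the nonnegativity of the moment term (since $\eta<\kappa_\ell$) yields $\RelEnt[\tmu|\nu^\eta]\leq C+\log z^\eta$. For the Wasserstein statement, assume $\ell>1$ and $p\in[1,\ell)$: by the Wang--Wang--Wu version of Sanov's theorem (Proposition~\ref{prop:Sanov}), $\RelEnt[\cdot|\nu^\eta]$ is a good rate function on $\Ps_p(\R^d)$, so its sublevel set $\{\mu\in\Ps_p(\R^d):\RelEnt[\mu|\nu^\eta]\leq C+\log z^\eta\}$ is compact in $\Ps_p(\R^d)$; intersecting with the closed subset $\tPs_p(\R^d)$ yields a compact set in $\tPs_p(\R^d)$ containing the level set $\{\FreEne\leq C\}$. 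Lower semicontinuity of $\FreEne$ on $\tPs_p(\R^d)$ (coming from LSC of Boltzmann's entropy on $\Ps(\R^d)$, Assumption~\eqref{ass:LSC} on $\IntEne$, and the fact that the Wasserstein topology is stronger than the weak topology) then forces $\{\FreEne\leq C\}$ to be closed, hence compact.

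For the quotient-space statement, I would argue similarly but with the weak topology, which bypasses the need for $\ell>1$. By translation invariance, $\{\bar\mu\in\bPs(\R^d):\bFreEne[\bar\mu]\leq C\}=\rho(\{\tmu\in\tPs_\ell(\R^d):\FreEne[\tmu]\leq C\})$, and by the bound above this is contained in $\rho(L)$ with $L:=\{\mu\in\Ps(\R^d):\RelEnt[\mu|\nu^\eta]\leq C+\log z^\eta\}$. Sanov's theorem in weak topology makes $L$ compact in $\Ps(\R^d)$, and the orbit map $\rho$ being continuous, $\rho(L)$ is compact in $\bPs(\R^d)$. Finally, $\bFreEne$ is lower semicontinuous on $\bPs(\R^d)$ because $\bFreEne\circ\rho=\FreEne$ is lower semicontinuous on $\Ps(\R^d)$ and the quotient topology is by definition the strongest topology making $\rho$ continuous; hence the level set is closed in $\bPs(\R^d)$ and, being contained in the compact set $\rho(L)$, is compact.

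The main subtlety is the $\ell=1$ case, where one cannot hope for compactness of level sets in any Wasserstein topology (as the counterexample following Theorem~\ref{theo:Wass} already shows that $\tPs_1(\R^d)$ is not the right ambient space). The quotient construction is precisely what converts the translation slack --- otherwise fatal --- into a harmless fiber of the orbit map, so the combinatorics of passing from the compact $L\subset\Ps(\R^d)$ to a compact $\rho(L)\subset\bPs(\R^d)$ is where the argument genuinely uses the quotient structure.
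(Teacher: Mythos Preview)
Your overall strategy---bounding $\FreEne$ from below by the relative entropy $\RelEnt[\cdot|\nu^\eta]$ and then invoking Sanov for compactness---is sound and in fact slicker than the paper's route, which establishes sequential compactness via Prohorov and a separate metrisability lemma. The key inequality you derive is exactly the paper's inequality~\eqref{eq:borneinfF}.

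There is, however, a genuine gap in the closedness step. You assert that Boltzmann's entropy $\Ent$ is lower semicontinuous on $\Ps(\R^d)$ with the weak topology; this is \emph{false}. A concrete counterexample: take $p_n=(1-\tfrac{1}{n})\varphi+\tfrac{1}{n}q_n$ with $\varphi$ the standard Gaussian density and $q_n$ the Laplace density of scale $e^{n^2}$. Then $\mu_n\in\Ps_1(\R)$, $\mu_n\to\varphi\,\dd x$ weakly, but by convexity $\Ent[\mu_n]\leq\Ent[\varphi]-n+o(1)\to-\infty$, so LSC fails. Hence your justification for the closedness of $\{\bFreEne\leq C\}$ in $\bPs(\R^d)$---which you correctly reduce to LSC of $\FreEne$ on $\Ps(\R^d)$---breaks down.

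For the Wasserstein statement the fix is minor: on $\Ps_p(\R^d)$ with $p\geq1$, first moments are continuous, so writing $\Ent[\mu]=\RelEnt[\mu|\nu]-c\int|x|\dd\mu-\log z$ for a Laplace reference $\nu$ gives LSC of $\Ent$ there, and your argument goes through. For the quotient statement you need more: the paper devotes Lemma~\ref{lem:lsPs} to proving closedness of $\{\FreEne\leq a\}$ in $\Ps(\R^d)$, passing to centered representatives $\tmu_n$, using the moment control~\eqref{eq:momentcontrol} and an $L^1$-compactness argument for the densities \`a la Jordan--Kinderlehrer--Otto. An alternative compatible with your setup: once you have $\sup_n\int|x|^\ell\dd\tmu_n\leq a'$, the identity $\Ent[\tmu_n]=\RelEnt[\tmu_n|\nu^\eta]-\tfrac{2\eta}{\sigma^2}\int|x|^\ell\dd\tmu_n-\log z^\eta$ and LSC of relative entropy give $\liminf_n\Ent[\tmu_n]\geq\Ent[\tilde\nu]-\tfrac{2\eta}{\sigma^2}a'$ for every $\eta>0$; letting $\eta\downarrow0$ yields the missing LSC along moment-bounded sequences. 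Either way, this step is not for free and should be supplied.
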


Combining the results of Lemmas~\ref{lem:C} and~\ref{lem:goodrate}, we conclude that, under the respective assumptions of Theorems~\ref{theo:Wass} and~\ref{theo:quot}, the functionals $\tRate$ and $\bRate$ are good rate functions, respectively on $\tPs_p(\R^d)$ and $\bPs(\R^d)$. We first state an auxiliary result.

\begin{lem}[Level sets on $\Ps(\R^d)$]\label{lem:lsPs}
  Under the assumptions of Lemma~\ref{lem:C}, for all $a \in \R$, the set
  \begin{equation*}
    A := \left\{\mu \in \Ps(\R^d) : \FreEne[\mu] \leq a\right\}
  \end{equation*}
  is closed in $\Ps(\R^d)$. Besides, letting $\ell \geq 1$ be given by Assumption~\eqref{ass:GC}, we have $A \subset \Ps_\ell(\R^d)$ and there exists $a' \in \R$ such that
  \begin{equation}\label{eq:momentcontrol}
    \forall \mu \in A, \qquad \int_{x \in \R^d} |x|^\ell \dd\Tc\mu(x) \leq a'.
  \end{equation}
\end{lem}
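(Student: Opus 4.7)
The plan is to first establish the inclusion $A \subset \Ps_\ell(\R^d)$ and the moment bound, and then deduce closedness via a lower semicontinuity argument that exploits the nonnegativity of the relative entropy to circumvent the translation invariance of $\FreEne$. The inclusion is immediate from Assumption~\eqref{ass:GC}: if $\mu \not\in \Ps_\ell(\R^d)$, then $\IntEne[\mu] = +\infty$, whence $\FreEne[\mu] = +\infty > a$.

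For the moment bound, I would combine the translation invariance of $\FreEne$ (which follows from Assumption~\eqref{ass:TI} together with the invariance of the Lebesgue measure), the growth lower bound $\IntEne[\Tc\mu] \geq \kappa_\ell \int |x|^\ell \dd\Tc\mu$ from Assumption~\eqref{ass:GC}, and the nonnegativity of the relative entropy $\RelEnt[\Tc\mu|\nu^{\kappa_\ell/2}]$, which through Lemma~\ref{lem:entrelent} reads $\Ent[\Tc\mu] \geq -(\kappa_\ell/\sigma^2)\int|x|^\ell \dd\Tc\mu - \log z^{\kappa_\ell/2}$. Summing these bounds, the $\ell$-th moment terms partially cancel and yield
\begin{equation*}
  \FreEne[\mu] = \FreEne[\Tc\mu] \geq \frac{\kappa_\ell}{\sigma^2}\int_{x\in\R^d}|x|^\ell \dd\Tc\mu(x) - \log z^{\kappa_\ell/2},
\end{equation*}
from which one reads both the claimed moment bound with $a' = (\sigma^2/\kappa_\ell)(a+\log z^{\kappa_\ell/2})$ and, as a byproduct, statement~\eqref{it:C:2} of Lemma~\ref{lem:C}.

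To prove the closedness of $A$ in $\Ps(\R^d)$, let $\mu_n \to \mu$ weakly with $\mu_n \in A$. The uniform central moment bound $\int|x|^\ell \dd\Tc\mu_n \leq a'$ combined with the tightness of $(\mu_n)$ forces the means $\bar\xi_n := \int x \dd\mu_n(x)$ to remain bounded: otherwise, selecting $R_0$ with $\inf_n \mu_n(B(0,R_0)) \geq 1/2$ and using $|x-\bar\xi_n| \geq |\bar\xi_n|-R_0$ on $B(0,R_0)$ along a subsequence with $|\bar\xi_n| \to +\infty$ would force $\int|x-\bar\xi_n|^\ell \dd\mu_n \to +\infty$, contradicting the uniform bound. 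By the triangle inequality one then obtains $K := \sup_n \int|x|^\ell \dd\mu_n < +\infty$. The key observation is that Lemma~\ref{lem:entrelent} rewrites, for every $\eta > 0$,
\begin{equation*}
  \FreEne[\mu] + \frac{2\eta}{\sigma^2}\int_{x\in\R^d}|x|^\ell \dd\mu(x) = \RelEnt[\mu|\nu^\eta] + \frac{2}{\sigma^2}\IntEne[\mu] - \log z^\eta,
\end{equation*}
and the right-hand side is lower semicontinuous on $\Ps(\R^d)$ with its weak topology, since $\RelEnt[\cdot|\nu^\eta]$ is lower semicontinuous by the Donsker-Varadhan variational characterisation and $\IntEne$ is lower semicontinuous by Assumption~\eqref{ass:LSC}. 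Taking the liminf along $(\mu_n)$, using $\int|x|^\ell \dd\mu_n \leq K$ and $\int|x|^\ell \dd\mu \leq K$ (by Fatou), one obtains $\FreEne[\mu] + (2\eta/\sigma^2)\int|x|^\ell \dd\mu \leq a + 2\eta K/\sigma^2$ for every $\eta > 0$; letting $\eta \dto 0$ then yields $\FreEne[\mu] \leq a$.

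The delicate step is the confinement of the means $\bar\xi_n$: since $\Tc$ is not continuous on $\Ps(\R^d)$, one cannot directly pass from weak convergence of $\mu_n$ to any form of convergence of $\Tc\mu_n$, and weak convergence alone is too coarse for $\FreEne$ to be lower semicontinuous (entropy may drop in the limit when mass escapes to infinity). The interplay between tightness of $(\mu_n)$ and the uniform central moment bound from Step~2 is precisely what forbids such an escape and thereby enables the $\eta$-absorption argument to bypass the unavailable convergence of the full $\ell$-th moment.
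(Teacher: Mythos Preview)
Your proof is correct. Steps~1 and~2 (the inclusion $A \subset \Ps_\ell(\R^d)$ and the central moment bound) coincide with the paper's argument, with the same choice $\eta = \kappa_\ell/2$ and the same constant $a'$.

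The closedness argument differs. The paper works with the centered measures $\tmu_n := \Tc\mu_n$: using that $\FreEne[\tmu_n]$ and the $\ell$-th moments of $\tmu_n$ are uniformly bounded, it invokes an $L^1$-weak compactness argument for the densities (citing~\cite{JorKinOtt98}) to extract a subsequential limit $\tilde\nu$ with $\liminf_n \FreEne[\tmu_n] \geq \FreEne[\tilde\nu]$, and then identifies $\rho(\mu) = \rho(\tilde\nu)$ via continuity of the orbit map, so that $\FreEne[\mu] = \FreEne[\tilde\nu]$ by translation invariance. Your route instead stays on the uncentered side: you first show that the means $\bar\xi_n$ are bounded (combining tightness of $(\mu_n)$ with the uniform central moment bound), whence the uncentered $\ell$-th moments are uniformly bounded by some $K$, and then exploit that $\FreEne + \frac{2\eta}{\sigma^2}\int|x|^\ell\,\dd\cdot = \RelEnt[\cdot|\nu^\eta] + \frac{2}{\sigma^2}\IntEne - \log z^\eta$ is genuinely lower semicontinuous on $\Ps(\R^d)$; letting $\eta \downarrow 0$ absorbs the moment penalty. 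Your argument is more self-contained (it avoids the external $L^1$-compactness reference and the orbit map), while the paper's version ties in more directly with the quotient-space viewpoint used later. Both are valid; yours is arguably the more elementary of the two.
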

\begin{proof}
  Since, by Remark~\ref{rk:Ent}, neither $\Ent[\mu]$ nor $\IntEne[\mu]$ can take the value $-\infty$, any $\mu \in A$ satisfies $\Ent[\mu] < +\infty$ and $\IntEne[\mu] < +\infty$, which by Assumption~\eqref{ass:GC} ensures that $A \subset \Ps_\ell(\R^d)$.
  
  Let us now fix $\mu \in A$ and define $\tmu = \Tc\mu \in \tPs_\ell(\R^d)$. For all $\eta > 0$, we recall the definitions of $z^\eta$ and $\nu^\eta$ from Section~\ref{s:conf}. By the translation invariance of $\FreEne$, Lemma~\ref{lem:entrelent} and the definition~\eqref{eq:PinEneeta} of $\PinEne^\eta$,
  \begin{equation*}
    \FreEne[\mu] = \FreEne[\tmu] = \RelEnt[\tmu|\nu^\eta] + \frac{2}{\sigma^2}\left(\IntEne[\tmu]-\eta\int_{x \in \R^d}|x|^\ell\dd\tmu(x)\right) - \log z^\eta.
  \end{equation*}
  Using the fact that the relative entropy is nonnegative and then Assumption~\eqref{ass:GC}, we deduce that
  \begin{equation}\label{eq:borneinfF}
    \FreEne[\mu] \geq \frac{2}{\sigma^2}(\kappa_\ell-\eta)\int_{x \in \R^d}|x|^\ell\dd\tmu(x) - \log z^\eta,
  \end{equation}
  so that taking $\eta = \kappa_\ell/2$ and recalling that $\mu \in A$ yields
  \begin{equation*}
    \int_{x \in \R^d}|x|^\ell\dd\tmu(x) \leq \frac{\sigma^2}{\kappa_\ell}\left(a + \log z^\eta\right) =: a',
  \end{equation*}
  which provides~\eqref{eq:momentcontrol}.
  
  In order to show that $A$ is closed in $\Ps(\R^d)$, let us take a sequence $(\mu_n)_{n \geq 1}$ in $A$, which converges to some $\mu$ in $\Ps(\R^d)$, and prove that
  \begin{equation*}
    \liminf_{n \to +\infty} \FreEne[\mu_n] \geq \FreEne[\mu]
  \end{equation*}
  which implies $\mu \in A$. As a first step, we note that, according to the first part of the proof, $\mu_n \in \Ps_\ell(\R^d)$ for all $n \geq 1$, which allows us to define $\tmu_n = \Tc\mu_n$ and notice that $\FreEne[\tmu_n] = \FreEne[\mu_n]$; besides, by~\eqref{eq:momentcontrol}, the sequence of $\ell$-th order moments of $\tmu_n$ is bounded. Since the functional $\IntEne$ is nonnegative, the sequence $\FreEne[\tmu_n]$ is also bounded. Denoting by $\tp_n$ the density of $\tmu_n$, we then obtain from standard arguments~\cite[pp.~7-8]{JorKinOtt98} the existence of a probability density $\tilde{q}$ toward which $\tp_n$ converges weakly in $L^1(\R^d)$, at least along a subsequence, and such that
  \begin{equation*}
    \liminf_{n \to +\infty} \FreEne[\tmu_n] \geq \FreEne[\tilde{\nu}],
  \end{equation*}
  where we denote by $\tilde{\nu}$ the probability measure with density $\tilde{q}$. Finally, since the orbit map $\rho : \Ps(\R^d) \to \bPs(\R^d)$ is continuous, the series of identities
  \begin{equation*}
    \rho(\mu) = \lim_{n \to +\infty} \rho(\mu_n) = \lim_{n \to +\infty} \rho(\tmu_n) = \rho(\tilde{\nu})
  \end{equation*}
  in $\bPs(\R^d)$ implies that $\FreEne[\mu] = \FreEne[\tilde{\nu}]$, whence the conclusion.
\end{proof}

The inequality~\eqref{eq:borneinfF} shows that $\FreEne$ is bounded from below on $\Ps(\R^d)$, which proves the statement~\eqref{it:C:2} of Lemma~\ref{lem:C}. We may now complete the proof of Lemma~\ref{lem:goodrate}.

\begin{proof}[Proof of Lemma~\ref{lem:goodrate}]
  We fix $a \in \R$ and first prove that the set
  \begin{equation*}
    \bar{A} := \left\{\bmu \in \bPs(\R^d) : \bFreEne[\bmu] \leq a\right\}
  \end{equation*}
  is compact in $\bPs(\R^d)$. By Lemma~\ref{lem:openclosedsets}, this set is closed if and only if $\rho^{-1}(\bar{A})$ is closed in $\Ps(\R^d)$, which is the case since $\rho^{-1}(\bar{A})$ is easily seen to coincide with the set $A$ of Lemma~\ref{lem:lsPs}. We now proceed to show that this set is sequentially compact. Let $(\bmu_n)_{n \geq 1}$ be a sequence of elements of $\bar{A}$. By Lemma~\ref{lem:lsPs}, for all $n \geq 1$ there exists $\tmu_n \in A \cap \tPs_\ell(\R^d)$ such that $\rho(\tmu_n) = \bmu_n$, and we have the moment control
  \begin{equation}\label{eq:momentcontrol2}
    \forall n \geq 1, \qquad \int_{x \in \R^d} |x|^\ell\dd\tmu_n(x) \leq a'
  \end{equation}
  given by~\eqref{eq:momentcontrol}. Markov's inequality implies that the sequence $(\tmu_n)_{n \geq 1}$ is tight, so that by Prohorov's Theorem~\cite[Theorem~5.1, p.~59]{Bil99}, it possesses a converging subsequence. The continuity of the map $\rho$ then ensures that the sequence $(\bmu_n)_{n \geq 1}$ possesses a converging subsequence as well, which shows the sequential compactness of $\bar{A}$. Since we prove in Lemma~\ref{lem:bdP} that the quotient topology on $\bPs(\R^d)$ is metrisable,~\cite[Theorem~B.2, p.~345]{DemZei10} allows us to conclude that $\bar{A}$ is compact and obtain the first part of Lemma~\ref{lem:goodrate}.
  
  We now assume that $\ell > 1$, fix $p \in [1,\ell)$, and prove that the set
  \begin{equation*}
    \tilde{A} := \left\{\tmu \in \tPs_p(\R^d) : \FreEne[\tmu] \leq a\right\} = A \cap \tPs_p(\R^d)
  \end{equation*}
  is compact in $\tPs_p(\R^d)$. Since the Wasserstein topology is stronger than the topology of weak convergence, Lemma~\ref{lem:lsPs} implies that $A$ is closed in $\Ps_p(\R^d)$, and therefore $\tilde{A}$ is closed in $\tPs_p(\R^d)$. Now for all sequences $(\tmu_n)_{n \geq 1}$ of elements of $\tilde{A}$, the moment control~\eqref{eq:momentcontrol2} ensures that $(\tmu_n)_{n \geq 1}$ possesses a subsequence, that we still index by $n$ for convenience, which converges to some $\mu$ in $\Ps(\R^d)$. To prove that the convergence actually holds in $\tPs_p(\R^d)$, we remark that since $p < \ell$, the moment control~\eqref{eq:momentcontrol2} also ensures the uniform integrability of the $p$-th order moment of $\tmu_n$, so that by~\cite[Definition~6.8, p.~96]{Vil09}, $\tmu_n$ converges to $\mu$ in $\tPs_p(\R^d)$, therefore $\tilde{A}$ is sequentially compact in $\tPs_p(\R^d)$. By~\cite[Theorem~B.2, p.~345]{DemZei10} again, we conclude that $\tilde{A}$ is compact in $\tPs_p(\R^d)$, whence the second part of Lemma~\ref{lem:goodrate}.
\end{proof}

\subsection{Exponential comparisons}\label{ss:expcomp} This subsection contains two auxiliary results which will be used in the proof of the large deviation upper and lower bounds.

\begin{lem}[Exponential tilting of $\tP^\eta_n$]\label{lem:PPetaPP}
  Let $\IntEne : \Ps(\R^d) \to [0,+\infty]$ be an energy functional satisfying Assumptions~\eqref{ass:TI}, \eqref{ass:F}, \eqref{ass:LSC} and \eqref{ass:GC}, and let $\eta > 0$.
  \begin{enumerate}[label=(\roman*),ref=\roman*]
    \item For all $p \geq 1$, for all Borel sets $\tilde{B}$ of $\tPs_p(\R^d)$,
    \begin{equation}\label{eq:tPPetatPP:0}
      \tPP^\eta_n(\tilde{B}) = \int_{\tbx \in \Mdn} \ind{\pi_n(\tbx) \in \tilde{B}}\dd\tP^\eta_n(\tbx),
    \end{equation}
    and
    \begin{equation}\label{eq:tPPetatPP}
      \tPP_n(\tilde{B}) = \frac{\tZ^\eta_n}{\tZ_n}\int_{\tbx \in \Mdn} \ind{\pi_n(\tbx) \in \tilde{B}} \exp\left(\frac{2n}{\sigma^2}\hat{V}^\eta_n(\tbx)\right)\dd\tP^\eta_n(\tbx).
    \end{equation}
    \item For all Borel sets $\bar{B}$ of $\bPs(\R^d)$,
    \begin{equation}\label{eq:bPPetabPP:0}
      \bPP^\eta_n(\bar{B}) = \int_{\tbx \in \Mdn} \ind{\rho(\pi_n(\tbx)) \in \bar{B}} \dd\tP^\eta_n(\tbx),
    \end{equation}
    and
    \begin{equation}\label{eq:bPPetabPP}
      \bPP_n(\bar{B}) = \frac{\tZ^\eta_n}{\tZ_n}\int_{\tbx \in \Mdn} \ind{\rho(\pi_n(\tbx)) \in \bar{B}} \exp\left(\frac{2n}{\sigma^2}\hat{V}^\eta_n(\tbx)\right)\dd\tP^\eta_n(\tbx).
    \end{equation}
  \end{enumerate}
\end{lem}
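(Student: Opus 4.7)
The proof is a bookkeeping exercise built on two ingredients: the alternative expression $\tPP^\eta_n = \tP^\eta_n \circ \pi_n^{-1}$ obtained in Lemma~\ref{lem:tPPeta-alt}, and the elementary fact that the two densities $\tp_n$ and $\tp^\eta_n$ on $\Mdn$ share the same $\intene_n(\tbx)$-dependent factor, so that their ratio involves only the confining term $\hat{V}^\eta_n$ and the partition functions $\tZ_n$, $\tZ^\eta_n$.

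For part~(i), the identity~\eqref{eq:tPPetatPP:0} is a direct unpacking of~\eqref{eq:tPPetatPeta}, using that $\tP^\eta_n$ gives full weight to $\Mdn$. To obtain~\eqref{eq:tPPetatPP}, I would start from the representation
\begin{equation*}
  \tPP_n(\tilde{B}) = \int_{\tbx \in \Mdn} \ind{\pi_n(\tbx) \in \tilde{B}}\, \tp_n(\tbx)\, \dd\tbx,
\end{equation*}
which follows from Definition~\ref{defi:tP} together with~\eqref{eq:tPP}, and then compare the two densities furnished by Definition~\ref{defi:tP} and Lemma~\ref{lem:tPPeta-alt}: the $\intene_n(\tbx)$-contribution cancels in the ratio, leaving
\begin{equation*}
  \tp_n(\tbx) = \frac{\tZ^\eta_n}{\tZ_n}\exp\left(\frac{2n}{\sigma^2}\hat{V}^\eta_n(\tbx)\right)\tp^\eta_n(\tbx).
\end{equation*}
Substituting this expression in the above integral and recognising $\tp^\eta_n(\tbx)\dd\tbx = \dd\tP^\eta_n(\tbx)$ yields~\eqref{eq:tPPetatPP}.

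For part~(ii), the only new point is the translation invariance of $\rho \circ \pi_n$: since $\pi_n(\bx + \vec{\zeta}) = \tau_\zeta \pi_n(\bx)$ and $\rho$ identifies translates, one has $\rho(\pi_n(\bx)) = \rho(\pi_n(\tc_n(\bx)))$ for every $\bx \in \Rdn$. Unfolding $\bPP^\eta_n = P^\eta_n \circ \pi_n^{-1} \circ \rho^{-1}$ as an integral over $\Rdn$, applying this identity to the integrand, and using the definition $\tP^\eta_n = P^\eta_n \circ \tc_n^{-1}$ from~\eqref{eq:tPeta} to re-express the resulting integral against $\tP^\eta_n$ on $\Mdn$ gives~\eqref{eq:bPPetabPP:0}. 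The identity~\eqref{eq:bPPetabPP} then follows by exactly the same change-of-density step as for~\eqref{eq:tPPetatPP}, starting from the analogous representation $\bPP_n(\bar{B}) = \int_{\Mdn} \ind{\rho(\pi_n(\tbx)) \in \bar{B}}\, \tp_n(\tbx)\, \dd\tbx$ coming from $\bPP_n = \tPP_n \circ \rho^{-1}$ and~\eqref{eq:tPP}. No real obstacle is anticipated; the entire statement is an organised manipulation of pushforwards, densities, and the translation action, and the most delicate point is simply keeping track of which measures live on $\Rdn$ and which on $\Mdn$.
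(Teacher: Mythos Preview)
Your proposal is correct and follows essentially the same approach as the paper: both use the identity $\tPP^\eta_n = \tP^\eta_n \circ \pi_n^{-1}$ from Lemma~\ref{lem:tPPeta-alt}, the density comparison between $\tp_n$ and $\tp^\eta_n$ (the paper writes this directly at the level of integrals rather than as a pointwise ratio, but the content is identical), and the translation-invariance identity $\rho \circ \pi_n = \rho \circ \pi_n \circ \tc_n$ to reduce the $\bPs(\R^d)$ statements to $\tP^\eta_n$.
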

\begin{proof}
  We first address the proof of the identities~\eqref{eq:tPPetatPP:0} and~\eqref{eq:bPPetabPP:0}. The equality~\eqref{eq:tPPetatPP:0} is a straightforward consequence of the definition~\eqref{eq:bPPetatPPeta} of $\tPP^\eta_n$. To check the validity of~\eqref{eq:bPPetabPP:0}, we recall that the respective definitions~\eqref{eq:bPPetatPPeta} and~\eqref{eq:PPeta} of $\bPP^\eta_n$ and $\PP^\eta_n$ yield
  \begin{equation}\label{eq:PPetaPP:0}
    \bPP^\eta_n = \PP^\eta_n \circ \rho^{-1} = P^\eta_n \circ \pi_n^{-1} \circ \rho^{-1}.
  \end{equation}
  Besides, since for all $\bx \in \Rdn$,
  \begin{equation*}
    \pi_n(\tc_n(\bx)) = \tau_\xi\pi_n(\bx), \qquad \xi := \frac{1}{n}\sum_{i=1}^n x_i, 
  \end{equation*}
  we have $\rho \circ \pi_n = \rho \circ \pi_n \circ \tc_n$ on $\Rdn$. Hence we may substitute $\pi_n^{-1} \circ \rho^{-1}$ with $\tc_n^{-1} \circ \pi_n^{-1} \circ \rho^{-1}$ in~\eqref{eq:PPetaPP:0} to obtain
  \begin{equation*}
    \bPP^\eta_n = P^\eta_n \circ \tc_n^{-1} \circ \pi_n^{-1} \circ \rho^{-1} = \tP^\eta_n \circ \pi_n^{-1} \circ \rho^{-1},
  \end{equation*}
  thanks to~\eqref{eq:tPeta}. This equality immediately leads to~\eqref{eq:bPPetabPP:0}.
  
  We now address the proof of~\eqref{eq:tPPetatPP} and~\eqref{eq:bPPetabPP}. For all $p \geq 1$, for all Borel sets $\tilde{B}$ of $\tPs_p(\R^d)$, \eqref{eq:tPP} yields
  \begin{equation*}
    \tPP_n(\tilde{B}) = \frac{1}{\tZ_n}\int_{\tbx \in \Mdn} \ind{\pi_n(\tbx) \in \tilde{B}} \exp\left(-\frac{2n}{\sigma^2}W_n(\tbx)\right)\dd\tbx,
  \end{equation*}
  so that~\eqref{eq:tPPetatPP} follows from Lemma~\ref{lem:tPPeta-alt}. Likewise, for all Borel sets $\bar{B}$ of $\bPs(\R^d)$,~\eqref{eq:bPPetabPP} is obtained by the same chain of arguments, but starting with~\eqref{eq:bPP} in place of~\eqref{eq:tPP}.
\end{proof}

\begin{lem}[Exponential moment control]\label{lem:expcontrol}
  Let $\IntEne : \Ps(\R^d) \to [0,+\infty]$ be an energy functional satisfying Assumptions~\eqref{ass:TI}, \eqref{ass:F}, \eqref{ass:LSC}, \eqref{ass:GC} and~\eqref{ass:SH}. For all $q \in [1,+\infty)$,
  \begin{equation}\label{eq:expcontrol}
    \limsup_{\eta \dto 0} \limsup_{n \to +\infty} \frac{1}{n} \log \int_{\tbx \in \Mdn} \exp\left(\frac{2nq}{\sigma^2}\hat{V}^\eta_n(\tbx)\right)\dd\tP^\eta_n(\tbx) \leq 0.
  \end{equation}
\end{lem}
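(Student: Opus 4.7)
The strategy is to use Lemma~\ref{lem:tPPeta-alt} to rewrite the integral in~\eqref{eq:expcontrol} as
\[
\int_{\tbx\in\Mdn}\exp\left(\frac{2nq}{\sigma^2}\hat{V}^\eta_n(\tbx)\right)\dd\tP^\eta_n(\tbx) = \frac{1}{\tZ^\eta_n}\int_{\tbx\in\Mdn}\exp\left(\frac{2n(q-1)}{\sigma^2}\hat{V}^\eta_n(\tbx) - \frac{2n}{\sigma^2}W_n(\tbx)\right)\dd\tbx,
\]
so that the problem reduces to bounding the numerator from above and the denominator $\tZ^\eta_n$ from below. The plan is to do so in such a way that the partition function $\tZ_n$ of Lemma~\ref{lem:tP} appears in both bounds with symmetric multiplicative weights and thus cancels in the ratio, avoiding any direct identification of $\lim_n\frac{1}{n}\log\tZ^\eta_n$.

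In both estimates I first apply the upper bound from Lemma~\ref{lem:hatVeta-bounds} to replace $\hat V^\eta_n(\tbx)$ by a multiple of $\sum_i|\tx_i|^\ell$ up to a subexponential prefactor in $n$, and then Assumption~\eqref{ass:GC}, which on $\Mdn$ provides $\sum_i|\tx_i|^\ell\leq nW_n(\tbx)/\kappa_\ell$ since $\pi_n(\tbx)$ is centered. For the numerator, this reduces the integrand to the form $\exp(-2n\beta W_n/\sigma^2)$ with $\beta := 1 - 2^{\ell-1}(q-1)\eta/\kappa_\ell$, which lies in $(0,1)$ for $\eta$ small enough. Now the subhomogeneity Assumption~\eqref{ass:SH}, via the change of variables $\tby=\beta\tbx$, gives $W_n(\tbx)=W_n(\tby/\beta)\geq W_n(\tby)/\beta$, so that together with the Jacobian one obtains an upper bound of order $\beta^{-d(n-1)}\tZ_n$. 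For the denominator, the same reduction delivers an integrand $\exp(-2n\gamma W_n/\sigma^2)$ with $\gamma := 1 + 2^{\ell-1}\eta/\kappa_\ell > 1$, and the \emph{opposite} scaling direction in~\eqref{ass:SH}, via $\tby=\gamma\tbx$, yields $W_n(\tbx)=W_n(\tby/\gamma)\leq W_n(\tby)/\gamma$ and hence a lower bound of order $\gamma^{-d(n-1)}\tZ_n$. The factor $\tZ_n$ then cancels, and the left-hand side of~\eqref{eq:expcontrol} is bounded by a subexponential prefactor times $(\gamma/\beta)^{d(n-1)}$, whose $\frac{1}{n}\log$ converges to $d\log(\gamma/\beta)$ as $n\to\infty$; since $\beta,\gamma\to 1$ as $\eta\downarrow 0$, the iterated limsup is nonpositive.

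The main obstacle is engineering this cancellation of $\tZ_n$: the two bounds must be expressible in terms of the same reference partition function, which forces Assumption~\eqref{ass:SH} to be invoked in two opposite scaling regimes (a dilation by $1/\beta>1$ on the numerator side and a contraction by $1/\gamma<1$ on the denominator side), each of which becomes trivial as $\eta\downarrow 0$. An alternative route would proceed via the Laplace asymptotics $\frac{1}{n}\log\tZ^\eta_n\to-\FreEne^\eta_\star$ provided by Proposition~\ref{prop:Dupuis} together with the continuity of $\eta\mapsto\FreEne^\eta_\star$ at $\eta=0$, but the scaling argument is more direct and makes transparent why Assumption~\eqref{ass:SH} is the appropriate structural hypothesis here.
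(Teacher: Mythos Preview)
Your argument is correct and follows a genuinely different route from the paper's.

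The paper's proof proceeds by splitting the domain of integration according to whether $\frac{1}{n}\sum_i|\tx_i|^\ell$ is smaller or larger than $1$. On the bounded region the upper bound of Lemma~\ref{lem:hatVeta-bounds} gives a direct estimate; on the complement, a somewhat delicate comparison (Step~1) shows that $(1-q)\hat V^\eta_n+W_n\geq \hat V^{\eta(1-\epsilon)}_n+(1-\epsilon)W_n$, after which Assumption~\eqref{ass:SH} and the change of variables $\tby=(1-\epsilon)\tbx$ bound the resulting integral by $(1-\epsilon)^{-dn}\tZ^\eta_n$, so that the denominator $\tZ^\eta_n$ cancels exactly.

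Your approach avoids the domain splitting entirely: you bound both numerator and denominator by scaled copies of the \emph{unconfined} partition function $\tZ_n$, so that $\tZ_n$ cancels in the ratio. This is more transparent and slightly shorter; it also makes the role of Assumption~\eqref{ass:SH} very explicit, since it is invoked once with contraction factor $\beta\in(0,1)$ (for the numerator) and once with contraction factor $1/\gamma\in(0,1)$ (for the lower bound on the denominator). Note that, despite your phrasing ``opposite scaling direction'', both applications are instances of the \emph{same} inequality $tW_n(\bx)\geq W_n(t\bx)$ for $t\in(0,1)$, merely applied at different points; no reverse inequality is needed. The paper's approach, by contrast, compares $\tZ^\eta_n$ with itself and therefore uses \eqref{ass:SH} only once, at the price of the preliminary Step~1 estimate and the domain decomposition. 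Either argument proves the lemma under the stated hypotheses.
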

\begin{proof}
  Let us fix $q \in [1,+\infty)$ and $\epsilon \in (0,1)$. The proof is divided in 3 steps.
  
  \emph{Step~1.} In this step, we construct $\eta_0 > 0$, depending on $\epsilon$, such that for all $\eta \leq \eta_0$, there exists $n_0 \geq 2$ which depends on $\eta$ such that, for all $n \geq n_0$, for all $\tbx \in \Mdn$, if
  \begin{equation}\label{eq:expcontrol:normel}
    \frac{1}{n}\sum_{i=1}^n |\tx_i|^\ell \geq 1,
  \end{equation}
  then
  \begin{equation}\label{eq:expcontrol:epsilon}
    (1-q)\hat{V}^\eta_n(\tbx)+\intene_n(\tbx) \geq \hat{V}^{\eta(1-\epsilon)}_n(\tbx)+(1-\epsilon)\intene_n(\tbx).
  \end{equation}
  We first rewrite~\eqref{eq:expcontrol:epsilon} under the equivalent formulation
  \begin{equation*}
    \epsilon\intene_n(\tbx) \geq (q-1)\hat{V}^\eta_n(\tbx) + \hat{V}^{\eta(1-\epsilon)}_n(\tbx).
  \end{equation*}
  On the one hand, the upper bound of Lemma~\ref{lem:hatVeta-bounds} yields, for all $\tbx \in \Mdn$,
  \begin{equation*}
    (q-1)\hat{V}^\eta_n(\tbx) + \hat{V}^{\eta(1-\epsilon)}_n(\tbx) \leq 2^{\ell-1}\eta(q-\epsilon)\frac{1}{n}\sum_{i=1}^n|\tx_i|^\ell + \alpha(n,\eta,\epsilon),
  \end{equation*}
  with
  \begin{equation*}
    \alpha(n,\eta,\epsilon) := (q-1)\frac{\sigma^2}{2n}\log\frac{(2^{\ell-1}n)^{d/\ell}}{z^\eta} + \frac{\sigma^2}{2n}\log\frac{(2^{\ell-1}n)^{d/\ell}}{z^{\eta(1-\epsilon)}};
  \end{equation*}
  on the other hand, Assumption~\eqref{ass:GC} yields, for all $\tbx \in \Mdn$,
  \begin{equation*}
    \intene_n(\tbx) \geq \frac{\kappa_\ell}{n}\sum_{i=1}^n|\tx_i|^\ell.
  \end{equation*}
  We deduce that~\eqref{eq:expcontrol:epsilon} holds as soon as
  \begin{equation}\label{eq:expcontrol:epsilon:2}
    \left(\epsilon\kappa_\ell - 2^{\ell-1}\eta(q-\epsilon)\right)\frac{1}{n}\sum_{i=1}^n|\tx_i|^\ell \geq \alpha(n,\eta,\epsilon).
  \end{equation}
  With the latter condition at hand, let us define
  \begin{equation*}
    \eta_0 = \frac{\epsilon\kappa_\ell}{2^\ell(q-\epsilon)}
  \end{equation*}
  and notice that, for all $\eta \leq \eta_0$,
  \begin{equation*}
    \lim_{n \to +\infty} \alpha(n,\eta,\epsilon) = 0
  \end{equation*}
  so that there exists $n_0 \geq 2$, depending on $\eta$, such that, for all $n \geq n_0$, 
  \begin{equation*}
    \alpha(n,\eta,\epsilon) \leq \frac{\epsilon\kappa_\ell}{2},
  \end{equation*}
  and therefore
  \begin{equation*}
    \left(\epsilon\kappa_\ell - 2^{\ell-1}\eta(q-\epsilon)\right) \geq \frac{\epsilon\kappa_\ell}{2} \geq \alpha(n,\eta,\epsilon).
  \end{equation*}
  As a conclusion, for all $n \geq n_0$, if $\tbx \in \Mdn$ satisfies~\eqref{eq:expcontrol:normel} then~\eqref{eq:expcontrol:epsilon:2} holds, which leads to~\eqref{eq:expcontrol:epsilon}.
  
  \emph{Step~2.} Let us fix $\eta \leq \eta_0$ and $n \geq n_0$, where $\eta_0$ and $n_0$ are given by Step~1. In this step, we give an upper bound on 
  \begin{equation*}
    I^\eta_n := \int_{\tbx \in \Mdn} \exp\left(\frac{2nq}{\sigma^2}\hat{V}^\eta_n(\tbx)\right)\dd\tP^\eta_n(\tbx)
  \end{equation*}
  by studying this integral separately on the domains
  \begin{equation*}
    D^\ell_{n,d} := \left\{\tbx \in \Mdn : \frac{1}{n}\sum_{i=1}^n |\tx_i|^\ell < 1\right\}
  \end{equation*}
  and on its complement. By the upper bound of Lemma~\ref{lem:hatVeta-bounds},
  \begin{equation*}
    \int_{\tbx \in D^\ell_{n,d}} \exp\left(\frac{2nq}{\sigma^2}\hat{V}^\eta_n(\tbx)\right)\dd\tP^\eta_n(\tbx) \leq \left(\frac{(2^{\ell-1}n)^{d/\ell}}{z^\eta}\right)^q\exp\left(\frac{2^\ell nq\eta}{\sigma^2}\right).
  \end{equation*}
  On the other hand, using Lemma~\ref{lem:tPPeta-alt} and Step~1 we obtain the chain of inequalities
  \begin{equation}\label{eq:expcontrol:Dc}
    \begin{aligned}
      & \int_{\tbx \in \Mdn \setminus D^\ell_{n,d}} \exp\left(\frac{2nq}{\sigma^2}\hat{V}^\eta_n(\tbx)\right)\dd\tP^\eta_n(\tbx)\\
      & \qquad = \frac{1}{\tZ^\eta_n}\int_{\tbx \in \Mdn  \setminus D^\ell_{n,d}} \exp\left(-\frac{2n}{\sigma^2}\left((1-q)\hat{V}^\eta_n(\tbx)+\intene_n(\tbx)\right)\right)\dd\tbx\\
      & \qquad \leq \frac{1}{\tZ^\eta_n}\int_{\tbx \in \Mdn  \setminus D^\ell_{n,d}} \exp\left(-\frac{2n}{\sigma^2}\left(\hat{V}^{\eta(1-\epsilon)}_n(\tbx)+(1-\epsilon)\intene_n(\tbx)\right)\right)\dd\tbx\\
      & \qquad \leq \frac{1}{\tZ^\eta_n}\int_{\tbx \in \Mdn} \exp\left(-\frac{2n}{\sigma^2}\left(\hat{V}^{\eta(1-\epsilon)}_n(\tbx)+(1-\epsilon)\intene_n(\tbx)\right)\right)\dd\tbx.
    \end{aligned}
  \end{equation}
  By Assumption~\eqref{ass:SH}, $(1-\epsilon)\intene_n(\tbx) \geq \intene_n((1-\epsilon)\tbx)$. We now derive a similar bound for $\hat{V}^{\eta(1-\epsilon)}_n(\tbx)$. The definition~\eqref{eq:hatVeta} yields
  \begin{equation*}
    \begin{aligned}
      & \hat{V}^\eta_n((1-\epsilon)\tbx) = -\frac{\sigma^2}{2n} \log \int_{\zeta\in\R^d} \exp\left(-\frac{2\eta}{\sigma^2}\sum_{i=1}^n |(1-\epsilon)\tx_i+\zeta|^\ell\right)\dd\zeta\\
      & \qquad \leq -\frac{\sigma^2 d}{2n}\log(1-\epsilon) -\frac{\sigma^2}{2n} \log \int_{\xi\in\R^d} \exp\left(-\frac{2\eta(1-\epsilon)}{\sigma^2}\sum_{i=1}^n|\tx_i+\xi|^\ell\right)\dd\xi\\
      & \qquad = -\frac{\sigma^2 d}{2n}\log(1-\epsilon) + \hat{V}^{\eta(1-\epsilon)}_n(\tbx),
    \end{aligned}
  \end{equation*}
  where we have performed the change of variable $\zeta=(1-\epsilon)\xi$ and used the fact that $(1-\epsilon)^\ell \leq 1-\epsilon$. Thus,
  \begin{equation*}
    \begin{aligned}
      & \int_{\tbx \in \Mdn} \exp\left(-\frac{2n}{\sigma^2}\left(\hat{V}^{\eta(1-\epsilon)}_n(\tbx)+(1-\epsilon)\intene_n(\tbx)\right)\right)\dd\tbx\\
      & \leq \int_{\tbx \in \Mdn} \exp\left(-\frac{2n}{\sigma^2}\left(\hat{V}^{\eta}_n((1-\epsilon)\tbx)+\intene_n((1-\epsilon)\tbx)\right) -d\log(1-\epsilon)\right)\dd\tbx\\
      & \leq \frac{1}{(1-\epsilon)^{dn}}\int_{\tby \in \Mdn} \exp\left(-\frac{2n}{\sigma^2}\left(\hat{V}^{\eta}_n(\tby)+\intene_n(\tby)\right)\right)\dd\tby = \frac{\tZ^\eta_n}{(1-\epsilon)^{dn}},
    \end{aligned}
  \end{equation*}
  thanks to the change of variable $\tby = (1-\epsilon)\tbx$. Injecting this inequality at the end of~\eqref{eq:expcontrol:Dc}, we obtain
  \begin{equation*}
    \int_{\tbx \in \Mdn \setminus D^\ell_{n,d}} \exp\left(\frac{2nq}{\sigma^2}\hat{V}^\eta_n(\tbx)\right)\dd\tP^\eta_n(\tbx) \leq \frac{1}{(1-\epsilon)^{dn}},
  \end{equation*}
  so that we may conclude this step by stating that
  \begin{equation*}
    I^\eta_n \leq \left(\frac{(2^{\ell-1}n)^{d/\ell}}{z^\eta}\right)^q\exp\left(\frac{2^\ell nq\eta}{\sigma^2}\right) + \frac{1}{(1-\epsilon)^{dn}}.
  \end{equation*}
  
  \emph{Step~3.} We complete the proof by studying the asymptotic behaviour of $I^\eta_n$. By Step~2 and the standard asymptotic subadditivity argument,
  \begin{equation*}
    \begin{aligned}
      & \limsup_{n \to +\infty} \frac{1}{n} \log I^\eta_n\\
      & \leq \limsup_{n \to +\infty} \frac{1}{n} \log\left(\left(\frac{(2^{\ell-1}n)^{d/\ell}}{z^\eta}\right)^q\exp\left(\frac{2^\ell nq\eta}{\sigma^2}\right) + \frac{1}{(1-\epsilon)^{dn}}\right)\\
      & \leq \limsup_{n \to +\infty} \frac{1}{n} \log\left(\left(\frac{(2^{\ell-1}n)^{d/\ell}}{z^\eta}\right)^q\exp\left(\frac{2^\ell nq\eta}{\sigma^2}\right)\right) + \limsup_{n \to +\infty} \frac{1}{n} \log \frac{1}{(1-\epsilon)^{dn}}\\
      & = \frac{2^\ell q\eta}{\sigma^2} - d\log(1-\epsilon),
    \end{aligned}
  \end{equation*}
  from which we then deduce that
  \begin{equation*}
    \limsup_{\eta \dto 0} \limsup_{n \to +\infty} \frac{1}{n} \log I^\eta_n \leq - d\log(1-\epsilon).
  \end{equation*}
  We may now complete the proof of~\eqref{eq:expcontrol} by letting $\epsilon$ vanish.
\end{proof}

\subsection{Large deviation upper and lower bounds}\label{ss:bounds} In this subsection, we complete the proof of Theorems~\ref{theo:Wass} and~\ref{theo:quot} by addressing the large deviation upper and lower bounds.

\begin{lem}[Large deviation upper bound]\label{lem:ub}
  Let $\IntEne : \Ps(\R^d) \to [0,+\infty]$ be an energy functional satisfying Assumptions~\eqref{ass:TI}, \eqref{ass:F}, \eqref{ass:LSC}, \eqref{ass:GC}, \eqref{ass:SH} and~\eqref{ass:CC}.
  \begin{enumerate}[label=(\roman*),ref=\roman*]
    \item For all closed sets $\bar{B}$ of $\bPs(\R^d)$,
    \begin{equation*}
      \limsup_{n \to +\infty} \frac{1}{n} \log \bPP_n(\bar{B}) \leq -\inf_{\bmu \in \bar{B}} \bRate[\bmu].
    \end{equation*}
    \item If the index $\ell \geq 1$ given by Assumption~\eqref{ass:GC} is such that $\ell > 1$, then for all $p \in [1,\ell)$, for all closed sets $\tilde{B}$ of $\tPs_p(\R^d)$,
    \begin{equation*}
      \limsup_{n \to +\infty} \frac{1}{n} \log \tPP_n(\tilde{B}) \leq -\inf_{\tmu \in \tilde{B}} \tRate[\tmu].
    \end{equation*}
  \end{enumerate}
\end{lem}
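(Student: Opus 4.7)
The plan is to prove both statements by the same argument, comparing $\bPP_n$ (resp.\ $\tPP_n$) to the confined measure $\bPP^\eta_n$ (resp.\ $\tPP^\eta_n$) for a small regularising parameter $\eta > 0$, and then letting $\eta \dto 0$. Since Lemma~\ref{lem:expcontrol} and the LDP for $\bPP^\eta_n$ and $\tPP^\eta_n$ in Corollary~\ref{cor:Contraction} are available in both the quotient and the Wasserstein settings, the only change needed to go from~(i) to~(ii) is to replace $\rho$ by $\Tc$ and the identity~\eqref{eq:bPPetabPP} by~\eqref{eq:tPPetatPP}. I describe the argument for~(i).

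First I would apply identity~\eqref{eq:bPPetabPP} from Lemma~\ref{lem:PPetaPP}, and H\"older's inequality with conjugate exponents $p,q \in (1,+\infty)$: since $\ind{\rho(\pi_n(\tbx))\in\bar{B}}$ equals its $p$-th power and its $\tP^\eta_n$-integral is $\bPP^\eta_n(\bar{B})$ by~\eqref{eq:bPPetabPP:0},
\begin{equation*}
\bPP_n(\bar{B}) \leq \frac{\tZ^\eta_n}{\tZ_n}\,\bPP^\eta_n(\bar{B})^{1/p}\,\left(\int_{\tbx\in\Mdn}\exp\left(\frac{2nq}{\sigma^2}\hat{V}^\eta_n(\tbx)\right)\dd\tP^\eta_n(\tbx)\right)^{1/q}.
\end{equation*}
The lower bound of Lemma~\ref{lem:hatVeta-bounds} gives $\tZ^\eta_n \leq (z^\eta/n^{d/\ell})\tZ_n$, so $\tfrac{1}{n}\log(\tZ^\eta_n/\tZ_n) \to 0$ as $n\to +\infty$. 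Taking $\tfrac{1}{n}\log$ and sending $n\to+\infty$, the large deviation upper bound for $\bPP^\eta_n$ of Corollary~\ref{cor:Contraction} controls the first integral factor, and Lemma~\ref{lem:expcontrol} the second, so that
\begin{equation*}
\limsup_{n\to+\infty}\frac{1}{n}\log \bPP_n(\bar{B}) \leq -\frac{1}{p}\inf_{\bmu\in\bar{B}}\bRate^\eta[\bmu] + \frac{1}{q}L_n(\eta,q),
\end{equation*}
where $\limsup_{\eta\dto 0}\limsup_{n\to+\infty} L_n(\eta,q) \leq 0$ for every fixed $q\geq 1$.

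Next I would pass to the limit $\eta \dto 0$. Comparing rate functions, since $\bar\vartheta \geq 0$ we have $\bRate^\eta[\bmu] \geq \bRate[\bmu] + (\FreEne_\star - \FreEne^\eta_\star)$, and it thus suffices to prove that $\FreEne^\eta_\star \to \FreEne_\star$ as $\eta\dto 0$. The inequality $\FreEne^\eta \geq \FreEne$ gives $\liminf_\eta \FreEne^\eta_\star \geq \FreEne_\star$. For the reverse inequality, I would pick $\mu \in \Ps(\R^d)$ with $\FreEne[\mu] \leq \FreEne_\star + \epsilon$; by Lemma~\ref{lem:lsPs}, $\Tc\mu \in \tPs_\ell(\R^d)$, hence by the translation invariance of $\FreEne$ and the identity $\FreEne^\eta[\Tc\mu] = \FreEne[\mu] + \tfrac{2\eta}{\sigma^2}\int |x|^\ell\dd\Tc\mu(x)$, we get $\limsup_\eta \FreEne^\eta_\star \leq \FreEne[\mu] \leq \FreEne_\star + \epsilon$. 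Sending $\epsilon \dto 0$ concludes.

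Combining these ingredients and finally letting $q \uto +\infty$ (so $p \dto 1$) while using that the residual term $L(q)/q$ stays non-positive, I would obtain $\limsup_n \tfrac{1}{n}\log\bPP_n(\bar{B}) \leq -\inf_{\bar{B}}\bRate$, which is~(i). Part~(ii) is handled identically using~\eqref{eq:tPPetatPP} and the Wasserstein LDP of Corollary~\ref{cor:Contraction}. I expect the main obstacle to be the careful bookkeeping of the three successive limits ($n\to+\infty$, then $\eta\dto 0$, then $q\uto+\infty$) and, analytically, the convergence $\FreEne^\eta_\star \to \FreEne_\star$, since no minimizer of $\FreEne$ is assumed to exist and one has to approximate via centered near-minimizers to exploit the $\ell$-th moment bound from Lemma~\ref{lem:lsPs}.
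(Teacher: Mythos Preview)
Your proposal is correct and follows essentially the same route as the paper's proof: express $\bPP_n(\bar B)$ via the tilting identity~\eqref{eq:bPPetabPP}, control $\tZ^\eta_n/\tZ_n$ with the lower bound of Lemma~\ref{lem:hatVeta-bounds}, split the remaining integral by H\"older, invoke Corollary~\ref{cor:Contraction} on the $\bPP^\eta_n(\bar B)$ factor and Lemma~\ref{lem:expcontrol} on the moment factor, then send $\eta\dto 0$ and finally the H\"older exponent to its extreme value. The only cosmetic difference is in the $\eta\dto 0$ step: the paper packages it as a separate Lemma~\ref{lem:cvtRate} showing $\lim_{\eta\dto 0}\inf_{B'}\Rate'^\eta=\inf_{B'}\Rate'$ for arbitrary $B'$, whereas you use the pointwise inequality $\bRate^\eta\geq\bRate+(\FreEne_\star-\FreEne^\eta_\star)$ together with $\FreEne^\eta_\star\to\FreEne_\star$; both arguments are the same computation, and your centering trick via $\Tc\mu$ to make the $\ell$-th moment finite is exactly what is hidden in the paper's proof of Lemma~\ref{lem:cvtRate}.
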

\begin{proof}
  We shall prove both statements at once. Let $B'$ refer to either $\bar{B} \subset \bPs(\R^d)$ or $\tilde{B} \subset \tPs_p(\R^d)$, $\PP'_n$ (resp. $\PP'^{\eta}_n$) refer to either $\bPP_n$ (resp. $\bPP^\eta_n$) or $\tPP_n$ (resp. $\tPP^\eta_n$), and so on. By Lemma~\ref{lem:PPetaPP}, for all $\eta > 0$, for all $n \geq 2$,
  \begin{equation}\label{eq:PPetaPP:pf}
    \begin{aligned}
      & \frac{1}{n}\log \PP'_n(B')\\
      & = \frac{1}{n}\log \frac{\tZ^\eta_n}{\tZ_n} + \frac{1}{n}\log \int_{\tbx \in \Mdn} \ind{\pi_n(\tbx) \in B''}\exp\left(\frac{2n}{\sigma^2}\hat{V}^\eta_n(\tbx)\right)\dd\tP^\eta_n(\tbx),
    \end{aligned}
  \end{equation}
  where $B''$ refers to either $\rho^{-1}(\bar{B})$ or $\tilde{B}$.
  
  By~\eqref{eq:tZeta} and the lower bound of Lemma~\ref{lem:hatVeta-bounds},
  \begin{equation*}
    \frac{\tZ^\eta_n}{\tZ_n} = \int_{\tbx \in \Mdn} \exp\left(-\frac{2n}{\sigma^2}\hat{V}^\eta_n(\tbx)\right)\dd\tP_n(\tbx) \leq z^\eta n^{-d/\ell},
  \end{equation*}
  whence
  \begin{equation*}
    \limsup_{n \to +\infty} \frac{1}{n}\log \frac{\tZ^\eta_n}{\tZ_n} \leq 0,
  \end{equation*}
  for any $\eta > 0$.

  Let us now fix $q,q' \in (1,+\infty)$ such that $1/q + 1/q'=1$. By Hölder's inequality, for all $\eta > 0$,
  \begin{equation*}
    \begin{aligned}
      & \log \int_{\tbx \in \Mdn} \ind{\pi_n(\tbx) \in B''}\exp\left(\frac{2n}{\sigma^2}\hat{V}^\eta_n(\tbx)\right)\dd\tP^\eta_n(\tbx)\\
      & \qquad \leq \frac{1}{q'} \log\int_{\tbx \in \Mdn} \ind{\pi_n(\tbx) \in B''}\dd\tP^\eta_n(\tbx)\\
      & \qquad \quad + \frac{1}{q} \log\int_{\tbx \in \Mdn} \exp\left(\frac{2nq}{\sigma^2}\hat{V}^\eta_n(\tbx)\right)\dd\tP^\eta_n(\tbx).
    \end{aligned}
  \end{equation*}
  By Lemma~\ref{lem:PPetaPP}, for all $\eta > 0$,
  \begin{equation*}
    \int_{\tbx \in \Mdn} \ind{\pi_n(\tbx) \in B''}\dd\tP^\eta_n(\tbx) = \PP'^\eta_n(B'),
  \end{equation*}
  and by Corollary~\ref{cor:Contraction},
  \begin{equation*}
    \limsup_{n \to +\infty} \frac{1}{n}\log \PP'^\eta_n(B') \leq -\inf_{\mu' \in B'} \Rate'^\eta[\mu'],
  \end{equation*}
  with $\Rate'^\eta$ referring to either $\bRate^\eta$ or $\tRate^\eta$. Using Lemma~\ref{lem:expcontrol}, we thus deduce that
  \begin{equation*}
    \begin{aligned}
      & \limsup_{n \to +\infty} \frac{1}{n}\log \PP'_n(B')\\
      & \qquad \leq \limsup_{\eta \dto 0} \limsup_{n \to +\infty} \frac{1}{n}\log \int_{\tbx \in \Mdn} \ind{\pi_n(\tbx) \in B''}\exp\left(\frac{2n}{\sigma^2}\hat{V}^\eta_n(\tbx)\right)\dd\tP^\eta_n(\tbx)\\
      & \qquad \leq \frac{1}{q'}\limsup_{\eta \dto 0} \left\{-\inf_{\mu' \in B'} \Rate'^\eta[\mu']\right\},
    \end{aligned}
  \end{equation*}
  from which we deduce that
  \begin{equation*}
    \limsup_{n \to +\infty} \frac{1}{n}\log \PP'_n(B') \leq \frac{1}{q'} \left\{-\inf_{\mu' \in B'} \Rate'[\mu']\right\},
  \end{equation*}
  thanks to Lemma~\ref{lem:cvtRate} stated below. Since $q'$ is arbitrarily close to $1$, the proof is completed.
\end{proof}

\begin{lem}[Large deviation lower bound]\label{lem:lb}
  Let $\IntEne : \Ps(\R^d) \to [0,+\infty]$ be an energy functional satisfying Assumptions~\eqref{ass:TI}, \eqref{ass:F}, \eqref{ass:LSC}, \eqref{ass:GC}, \eqref{ass:SH} and~\eqref{ass:CC}. 
  \begin{enumerate}[label=(\roman*),ref=\roman*]
    \item For all open sets $\bar{B}$ of $\bPs(\R^d)$,
    \begin{equation*}
      \liminf_{n \to +\infty} \frac{1}{n} \log \bPP_n(\bar{B}) \geq -\inf_{\bmu \in \bar{B}} \bRate[\bmu].
    \end{equation*}
    \item If the index $\ell \geq 1$ given by Assumption~\eqref{ass:GC} is such that $\ell > 1$, then for all $p \in [1,\ell)$, for all open sets $\tilde{B}$ of $\tPs_p(\R^d)$,
    \begin{equation*}
      \liminf_{n \to +\infty} \frac{1}{n} \log \tPP_n(\tilde{B}) \geq -\inf_{\tmu \in \tilde{B}} \tRate[\tmu].
    \end{equation*}
  \end{enumerate}
\end{lem}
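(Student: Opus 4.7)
The plan is to prove both statements simultaneously, following the overall strategy of Lemma~\ref{lem:ub} but replacing the Hölder inequality by a pointwise lower bound of the exponential tilt. Throughout, let $B'$ denote either an open set $\bar B \subset \bPs(\R^d)$ or an open set $\tilde B \subset \tPs_p(\R^d)$, let $B''$ stand for $\rho^{-1}(\bar B)$ or $\tilde B$ respectively, and let $\PP'_n$, $\PP'^\eta_n$, $\Rate'$, $\Rate'^\eta$ denote the corresponding measures and rate functions.

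Combining the identities of Lemma~\ref{lem:PPetaPP} with the lower bound of Lemma~\ref{lem:hatVeta-bounds}, I would first obtain the pointwise comparison
\begin{equation*}
  \PP'_n(B') \geq \frac{\tZ^\eta_n}{\tZ_n}\cdot\frac{n^{d/\ell}}{z^\eta}\cdot \PP'^\eta_n(B').
\end{equation*}
Taking logarithms, dividing by $n$, and noting that the middle factor is negligible as $n\to+\infty$, this yields
\begin{equation*}
  \liminf_{n\to+\infty} \frac{1}{n}\log\PP'_n(B') \geq \liminf_{n\to+\infty} \frac{1}{n}\log\frac{\tZ^\eta_n}{\tZ_n} - \inf_{\mu'\in B'}\Rate'^\eta[\mu'],
\end{equation*}
where the second term comes from the LDP lower bound for $\PP'^\eta_n$ provided by Corollary~\ref{cor:Contraction}.

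The partition-function ratio is then controlled by Lemma~\ref{lem:expcontrol}. Specialising the identities of Lemma~\ref{lem:PPetaPP} to $\tilde B = \tPs_p(\R^d)$ (or $\bar B = \bPs(\R^d)$) yields
\begin{equation*}
  \Exp_{\tP^\eta_n}\left[\exp\left(\frac{2n}{\sigma^2}\hat V^\eta_n\right)\right] = \frac{\tZ_n}{\tZ^\eta_n},
\end{equation*}
so that Lemma~\ref{lem:expcontrol} with $q=1$ gives $\liminf_{\eta\dto 0}\liminf_{n\to+\infty}\frac{1}{n}\log(\tZ^\eta_n/\tZ_n) \geq 0$.

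It only remains to pass to the limit $\eta\dto 0$ in the rate function. For any $\mu'\in B'$ with $\Rate'[\mu']<+\infty$, Assumption~\eqref{ass:GC} forces $\mu'$ to have a finite $\ell$-th moment, so that the perturbative term $\frac{2\eta}{\sigma^2}\vartheta[\mu']$ appearing in $\Rate'^\eta[\mu']$ vanishes with $\eta$; a short density argument—using that compactly supported measures provide admissible approximants with finite $\FreEne$ (Remark~\ref{rk:Cfinite} and Assumption~\eqref{ass:F}) and finite $\vartheta$—also shows $\FreEne^\eta_\star\to\FreEne_\star$. Hence $\Rate'^\eta[\mu']\to\Rate'[\mu']$ pointwise on $\{\Rate'<+\infty\}$. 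Extracting a sequence $\eta_k\dto 0$ along which the preceding liminf and the $\Rate'^{\eta_k}[\mu']$ are simultaneously close to their limits, and then taking the supremum over $\mu'\in B'$, completes the argument. The principal delicacy lies in the careful interchange of the limits $\eta\dto 0$ and $n\to+\infty$, the role of Lemma~\ref{lem:expcontrol} being precisely to ensure that the partition-function correction $\frac{1}{n}\log(\tZ^\eta_n/\tZ_n)$ does not spoil the rate.
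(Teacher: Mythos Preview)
Your proposal is correct and follows essentially the same route as the paper: the pointwise lower bound on $\hat V^\eta_n$ from Lemma~\ref{lem:hatVeta-bounds}, the identification of $\tZ_n/\tZ^\eta_n$ with the exponential moment so that Lemma~\ref{lem:expcontrol} with $q=1$ controls the partition-function ratio, and the LDP lower bound for $\PP'^\eta_n$ from Corollary~\ref{cor:Contraction} are exactly the ingredients the paper uses. The only cosmetic difference is the final $\eta\downarrow 0$ step: the paper packages the convergence $\inf_{\mu'\in B'}\Rate'^\eta[\mu']\to\inf_{\mu'\in B'}\Rate'[\mu']$ into a separate Lemma~\ref{lem:cvtRate} (whose proof is precisely the pointwise argument you sketch), whereas you reconstruct it inline; your phrasing about ``extracting a sequence $\eta_k$'' is a little more convoluted than needed, since the inequality holds for every $\eta>0$ and one can simply take $\limsup_{\eta\downarrow 0}$ of $\inf_{\mu'\in B'}\Rate'^\eta[\mu']$ and bound it by $\Rate'[\nu']$ for each fixed $\nu'\in B'$.
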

\begin{proof}
  We shall prove both statements at once, and use the same shortcut notations as in the proof of Lemma~\ref{lem:ub}. Once again, we start from the fact that $\PP'_n(B')$ satisfies the identity~\eqref{eq:PPetaPP:pf}. Noting that
  \begin{equation*}
    \frac{\tZ^\eta_n}{\tZ_n} = \frac{1}{\displaystyle\int_{\tbx \in \Mdn} \exp\left(\frac{2n}{\sigma^2}\hat{V}^\eta_n(\tbx)\right)\dd\tP^\eta_n(\tbx)}
  \end{equation*}
  and then using Lemma~\ref{lem:expcontrol} with $q=1$, we first obtain
  \begin{equation*}
    \liminf_{\eta \dto 0} \liminf_{n \to +\infty} \frac{1}{n}\log \frac{\tZ^\eta_n}{\tZ_n} \geq 0.
  \end{equation*}
  We now combine the lower bound of Lemma~\ref{lem:hatVeta-bounds} with Lemma~\ref{lem:PPetaPP} to write
  \begin{equation*}
    \int_{\tbx \in \Mdn} \ind{\pi_n(\tbx) \in B''}\exp\left(\frac{2n}{\sigma^2}\hat{V}^\eta_n(\tbx)\right)\dd\tP^\eta_n(\tbx) \geq \frac{n^{d/\ell}}{z^\eta} \PP_n'^\eta(B'),
  \end{equation*} 
  from which we deduce that
  \begin{equation*}
    \liminf_{n \to +\infty} \frac{1}{n}\log\PP'_n(B') \geq \liminf_{\eta \dto 0} \left\{-\inf_{\mu' \in B'} \Rate'^\eta[\mu']\right\}
  \end{equation*}
  thanks to Corollary~\ref{cor:Contraction}. The conclusion follows from the application of Lemma~\ref{lem:cvtRate}, which is stated below.
\end{proof}

\begin{lem}[Convergence of rate functions]\label{lem:cvtRate}
  Under the assumptions of either Theorem~\ref{theo:quot} or Theorem~\ref{theo:Wass}, let $\Rate'$ (resp. $\Rate'^\eta$, for $\eta > 0$) refer to either $\bRate$ (resp. $\bRate^\eta$) or $\tRate$ (resp. $\tRate^\eta$). Then for any subset $B'$ of either $\bPs(\R^d)$ or $\tPs_p(\R^d)$,
  \begin{equation*}
    \lim_{\eta \dto 0} \inf_{\mu' \in B'} \Rate'^\eta[\mu'] = \inf_{\mu' \in B'} \Rate'[\mu'].
  \end{equation*}
\end{lem}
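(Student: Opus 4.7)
The plan is to decompose, for either the tilded or barred setting,
\begin{equation*}
  \Rate'^\eta[\mu'] = \Rate'[\mu'] + \frac{2\eta}{\sigma^2}\vartheta'[\mu'] - (\FreEne^\eta_\star - \FreEne_\star),
\end{equation*}
where $\vartheta'$ stands for $\vartheta$ in the tilded case and $\bar{\vartheta}$ in the barred case, and to reduce the statement to two ingredients: the convergence $\FreEne^\eta_\star \to \FreEne_\star$ as $\eta \dto 0$, and the pointwise convergence $\Rate'^\eta[\mu'] \to \Rate'[\mu']$ at every $\mu'$ where $\Rate'[\mu'] < +\infty$.

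For the lower bound, the nonnegativity of $\vartheta'$ immediately yields $\inf_{\mu' \in B'} \Rate'^\eta[\mu'] \geq \inf_{\mu' \in B'} \Rate'[\mu'] - (\FreEne^\eta_\star - \FreEne_\star)$, from which $\liminf_{\eta \dto 0} \inf_{B'} \Rate'^\eta \geq \inf_{B'} \Rate'$ once the ground state convergence is established. For the upper bound, any $\mu_0 \in B'$ with $\Rate'[\mu_0] < +\infty$ has $\IntEne[\mu_0] < +\infty$, which by Assumption~\eqref{ass:GC} forces $\mu_0 \in \Ps_\ell(\R^d)$; by the translation invariance of $\vartheta$, one then gets $\vartheta'[\mu_0] \leq \int_{\R^d} |x|^\ell \dd\Tc\mu_0(x) < +\infty$. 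Hence $\Rate'^\eta[\mu_0] \to \Rate'[\mu_0]$, giving $\limsup_{\eta \dto 0} \inf_{B'} \Rate'^\eta \leq \Rate'[\mu_0]$ for each admissible $\mu_0$, and the matching upper bound after taking the infimum over $\mu_0$.

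It then remains to prove $\FreEne^\eta_\star \to \FreEne_\star$. One direction is trivial: $\FreEne^\eta \geq \FreEne$ since $\PinEne^\eta \geq 0$, so $\FreEne^\eta_\star \geq \FreEne_\star$. For the converse, the key observation is that $\PinEne^\eta$ is \emph{not} translation invariant, so one cannot simply plug in a minimising sequence of $\FreEne$; instead, for each $\mu$ with $\FreEne[\mu] < +\infty$, the translation invariance of both $\Ent$ and $\IntEne$ yields $\FreEne[\Tc\mu] = \FreEne[\mu]$, while $\Tc\mu \in \tPs_\ell(\R^d)$ has finite $\ell$-th moment by Assumption~\eqref{ass:GC} combined with Lemma~\ref{lem:lsPs}, so
\begin{equation*}
  \FreEne^\eta[\Tc\mu] = \FreEne[\mu] + \frac{2\eta}{\sigma^2} \int_{x \in \R^d} |x|^\ell \dd\Tc\mu(x) \xrightarrow[\eta \dto 0]{} \FreEne[\mu].
\end{equation*}
This gives $\limsup_{\eta \dto 0} \FreEne^\eta_\star \leq \FreEne[\mu]$ for every such $\mu$, and the desired bound follows upon taking the infimum. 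No serious obstacle is anticipated; the only delicate point is precisely this centering trick, which compensates for the lack of translation invariance of the confining term.
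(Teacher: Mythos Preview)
Your proof is correct and rests on the same core ingredients as the paper's: the nonnegativity of $\vartheta'$ for the lower bound, and the fact that $\FreEne'[\mu'] < +\infty$ forces $\vartheta'[\mu'] < +\infty$ (via Assumption~\eqref{ass:GC}) for the upper bound. The only organisational difference is that the paper avoids treating $\FreEne^\eta_\star \to \FreEne_\star$ separately: it observes that $\FreEne^\eta_\star = \inf_{\mu'}\{\FreEne'[\mu'] + \frac{2\eta}{\sigma^2}\vartheta'[\mu']\}$ (the infimum over translations being already built into $\vartheta'$), so that both $\inf_{B'}\Rate'^\eta$ and $\FreEne^\eta_\star$ are handled by the single statement $\lim_{\eta\downarrow 0}\inf_{C}\{\FreEne' + \frac{2\eta}{\sigma^2}\vartheta'\} = \inf_{C}\FreEne'$ applied once with $C = B'$ and once with $C$ equal to the whole space. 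This makes your explicit centering step for $\FreEne^\eta_\star$ unnecessary, though it is of course a valid alternative.
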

\begin{proof}
  The functions $\Rate'^\eta$ and $\Rate'$ write
  \begin{equation*}
    \Rate'^\eta[\mu'] = \FreEne'[\mu'] + \frac{2\eta}{\sigma^2} \vartheta'[\mu'] - \FreEne^\eta_\star, \qquad \Rate'[\mu'] = \FreEne'[\mu'] - \FreEne_\star,
  \end{equation*}
  with obvious notations for $\FreEne'$ and $\vartheta'$, and
  \begin{equation*}
    \FreEne^\eta_\star = \inf_{\mu'} \left\{\FreEne'[\mu'] + \frac{2\eta}{\sigma^2} \vartheta'[\mu']\right\}, \qquad \FreEne_\star = \inf_{\mu'} \FreEne'[\mu'].
  \end{equation*}
  Thus, it is sufficient to prove that
  \begin{equation*}
    \lim_{\eta \dto 0} \inf_{\mu' \in B'} \left\{\FreEne'[\mu'] + \frac{2\eta}{\sigma^2} \vartheta'[\mu']\right\} = \inf_{\mu' \in B'} \FreEne'[\mu'].
  \end{equation*}
  The fact that $\vartheta'[\mu']\geq 0$ immediately yields
  \begin{equation*}
    \liminf_{\eta \dto 0} \inf_{\mu' \in B'} \left\{\FreEne'[\mu'] + \frac{2\eta}{\sigma^2} \vartheta'[\mu']\right\} \geq \inf_{\mu' \in B'} \FreEne'[\mu'].
  \end{equation*}
  Furthermore, for any $\nu' \in B'$, 
  \begin{equation*}
    \inf_{\mu' \in B'} \left\{\FreEne'[\mu'] + \frac{2\eta}{\sigma^2} \vartheta'[\mu']\right\} \leq \FreEne'[\nu'] + \frac{2\eta}{\sigma^2} \vartheta'[\nu'],
  \end{equation*}
  and letting $\eta$ vanish in both sides of the inequality yields
  \begin{equation*}
    \limsup_{\eta \dto 0} \inf_{\mu' \in B'} \left\{\FreEne'[\mu'] + \frac{2\eta}{\sigma^2} \vartheta'[\mu']\right\} \leq \FreEne'[\nu'],
  \end{equation*}
  so that taking the infimum of the right-hand side over $\nu'$ yields
  \begin{equation*}
    \limsup_{\eta \dto 0} \inf_{\mu' \in B'} \left\{\FreEne'[\mu'] + \frac{2\eta}{\sigma^2} \vartheta'[\mu']\right\} \leq \inf_{\nu' \in B'} \FreEne'[\nu'],
  \end{equation*}
  which completes the proof.
\end{proof}

\section{Application to McKean-Vlasov and rank-based models}\label{s:appl}

\subsection{\texorpdfstring{\MV}{MV}-model}\label{ss:MV} This subsection presents the proof of Corollary~\ref{cor:MV}. We first assume that, in the decomposition~\eqref{eq:WsWf}, $W^\flat \equiv 0$.

\begin{lem}[Case $W^\flat \equiv 0$]\label{lem:MV-Ws}
  Let $W^\sharp : \R^d \to [0,+\infty)$ be an interaction potential satisfying Assumption~\eqref{ass:MVs}. Then the associated energy functional $\IntEne^\sharp$ defined by~\eqref{eq:MVIntEne} with $W = W^\sharp$ satisfies Assumptions~\eqref{ass:TI}, \eqref{ass:F}, \eqref{ass:LSC}, \eqref{ass:GC}, \eqref{ass:SH} and~\eqref{ass:CC}; besides, Assumption~\eqref{ass:GC} holds with the index $\ell \geq 1$ given by Assumption~\eqref{ass:MVs}.
\end{lem}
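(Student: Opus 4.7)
The plan is to verify each of the six hypotheses of the main theorems directly from the formula $\IntEne^\sharp[\mu] = \frac{1}{2}\iint W^\sharp(x-y)\dd\mu(x)\dd\mu(y)$, relying in each case on one specific aspect of Assumption~\eqref{ass:MVs}. The index $\ell \geq 1$ in~\eqref{ass:GC} will of course be the one furnished by~\eqref{ass:MVs}.

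Assumptions~\eqref{ass:TI}, \eqref{ass:F} and~\eqref{ass:LSC} are structural. Translation invariance is immediate, since shifting both integration variables by a common $\zeta \in \R^d$ leaves $W^\sharp(x-y)$ unchanged. For $\sigma$-finiteness, if $\mu$ is supported in a compact set $K$, then $\IntEne^\sharp[\mu] \leq \tfrac{1}{2}\sup_{z \in K-K}W^\sharp(z)$; the subhomogeneity in~\eqref{ass:MVs} forces $W^\sharp(0)=0$ and controls $W^\sharp$ radially by its values at scale one, which combined with finiteness and lower semicontinuity is enough to rule out singularities on $K-K$. For lower semicontinuity on $\Ps(\R^d)$, I would write the lsc nonnegative $W^\sharp$ as a monotone supremum of bounded continuous nonnegative functions $W^\sharp_k$, observe that $\mu \mapsto \iint W^\sharp_k(x-y)\dd\mu\dd\mu$ is continuous for the weak topology (using continuity of $\mu \mapsto \mu \otimes \mu$ together with boundedness and continuity of $W^\sharp_k$), and deduce lsc of $\IntEne^\sharp$ from the monotone convergence theorem, which expresses it as a supremum of continuous functionals.

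For the growth control~\eqref{ass:GC}, I would combine the pointwise lower bound $W^\sharp(z) \geq 2\kappa_\ell|z|^\ell$ with two observations. If $\mu \notin \Ps_\ell(\R^d)$, fixing any $y_0 \in \R^d$ and using the triangle-type inequality $|x|^\ell \leq 2^{\ell-1}(|x-y_0|^\ell + |y_0|^\ell)$ gives $\int |x-y_0|^\ell \dd\mu(x) = +\infty$ for every $y_0$, hence $\iint|x-y|^\ell\dd\mu\dd\mu = +\infty$ and $\IntEne^\sharp[\mu] = +\infty$. If $\tmu \in \tPs_\ell(\R^d)$, Jensen's inequality applied to the convex function $|\cdot|^\ell$ (here $\ell \geq 1$ is used) together with the centering identity $\int y\dd\tmu(y)=0$ yields
\[\int |x-y|^\ell\dd\tmu(y) \geq \Bigl|x - \int y\dd\tmu(y)\Bigr|^\ell = |x|^\ell,\]
and integration against $\dd\tmu(x)$ gives the required bound $\IntEne^\sharp[\tmu] \geq \kappa_\ell \int |x|^\ell\dd\tmu(x)$.

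The last two assumptions are verified by direct pairwise computation. For~\eqref{ass:SH}, the pointwise inequality $W^\sharp((1-\epsilon)(x_i - x_j)) \leq (1-\epsilon) W^\sharp(x_i - x_j)$ supplied by the subhomogeneity of $W^\sharp$, summed over all pairs $(i,j)$, yields $\intene_n((1-\epsilon)\bx) \leq (1-\epsilon)\intene_n(\bx)$. For~\eqref{ass:CC}, expanding the double sum and splitting diagonal and off-diagonal contributions gives
\[\Exp\bigl[\intene_n(Y_1,\ldots,Y_n)\bigr] = \frac{1}{2n^2}\sum_{i \neq j}\Exp\bigl[W^\sharp(Y_i - Y_j)\bigr] = \frac{n-1}{n}\IntEne^\sharp[\mu],\]
using independence for $i \neq j$ and the already noted identity $W^\sharp(0) = 0$ for $i = j$; this quantity converges to $\IntEne^\sharp[\mu]$ as $n \to +\infty$. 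I expect the subtlest point to be~\eqref{ass:F}, because lower semicontinuity alone does not guarantee local boundedness of $W^\sharp$ on $\R^d$; the subhomogeneity structure must be genuinely used to rule out singularities and extract a finite upper bound of $W^\sharp$ on compact sets.
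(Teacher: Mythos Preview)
Your verification of \eqref{ass:TI}, \eqref{ass:GC}, \eqref{ass:SH} and \eqref{ass:CC} matches the paper's essentially line for line, including the Jensen step for \eqref{ass:GC} and the explicit identity $\Exp[\intene^\sharp_n(Y_1,\ldots,Y_n)] = \frac{n-1}{n}\IntEne^\sharp[\mu]$ for \eqref{ass:CC}. For \eqref{ass:LSC} you approximate $W^\sharp$ monotonically from below by bounded continuous functions, whereas the paper applies Fatou's lemma directly to the nonnegative lower semicontinuous integrand along weakly convergent sequences $\mu_n \otimes \mu_n \to \mu \otimes \mu$; both routes are valid.

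Your instinct that \eqref{ass:F} is the delicate point is well placed---the paper simply calls it ``straightforward''---but your proposed fix, that subhomogeneity together with finiteness and lower semicontinuity forces $W^\sharp$ to be bounded on compact sets, does not go through. In $\R^2$, set $W^\sharp(x) = |x|^\ell f(\theta)$ where $\theta$ is the polar angle of $x$ and $f(\theta) = 2\kappa_\ell + |\sin 2\theta|^{-1}$ (with $f=2\kappa_\ell$ at the four angles where $\sin 2\theta = 0$). This function is even, finite-valued, lower semicontinuous, bounded below by $2\kappa_\ell|x|^\ell$, and subhomogeneous because $(1-\epsilon) \geq (1-\epsilon)^\ell$; yet it is unbounded on every neighbourhood of the origin, and for $\mu$ uniform on the unit disc one finds in polar coordinates that $\iint W^\sharp(x-y)\,\dd\mu(x)\,\dd\mu(y)$ factorises into a finite radial integral times $\int_0^{2\pi} f(\theta)\,\dd\theta = +\infty$. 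So \eqref{ass:F} can genuinely fail under \eqref{ass:MVs} as stated, and neither your argument nor the paper's one-word dismissal closes this. The issue is harmless for the intended examples (continuous, indeed polynomial, $W^\sharp$), and in the paper the full strength of \eqref{ass:F} is only invoked in Remark~\ref{rk:Cfinite}; elsewhere it is applied to empirical measures, for which pointwise finiteness of $W^\sharp$ already suffices.
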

\begin{proof}
  Assumptions~\eqref{ass:TI} and~\eqref{ass:F} are straightforward. The continuity of the mapping $\mu \mapsto \mu \otimes \mu$ on $\Ps(\R^d)$, combined with the fact that, by Assumtion~\eqref{ass:MVs}, $W^\sharp$ is nonnegative and lower semicontinuous, and Fatou's Lemma, yield Assumption~\eqref{ass:LSC}.
  
  Let $\ell \geq 1$ be given by Assumption~\eqref{ass:MVs}. By~\eqref{eq:MVIntEne}, for all $\mu \in \Ps(\R^d)$, 
  \begin{equation*}
    \IntEne^\sharp[\mu] \geq \kappa_\ell\iint_{x,y \in \R^d} |x-y|^\ell \dd\mu(x)\dd\mu(y),
  \end{equation*}
  which, by the Fubini-Tonelli Theorem, implies that $\IntEne^\sharp[\mu]=+\infty$ if $\mu \not\in \Ps_\ell(\R^d)$. On the other hand, if $\tmu \in \tPs_\ell(\R^d)$, then by Jensen's Inequality,
  \begin{equation*}
    \begin{aligned}
      \int_{x \in \R^d} |x|^\ell \dd\tmu(x) & = \int_{x \in \R^d} \left|x-\int_{y \in \R^d} y \dd\tmu(y)\right|^\ell \dd\tmu(x)\\
      & = \int_{x \in \R^d} \left|\int_{y \in \R^d} (x-y) \dd\tmu(y)\right|^\ell \dd\tmu(x)\\
      & \leq \iint_{x,y \in \R^d} \left|x-y\right|^\ell \dd\tmu(x)\dd\tmu(y),
    \end{aligned}
  \end{equation*}
  so that
  \begin{equation*}
    \IntEne^\sharp[\tmu] \geq \kappa_\ell \int_{x \in \R^d} |x|^\ell \dd\tmu(x),
  \end{equation*}
  and $\IntEne^\sharp$ satisfies Assumption~\eqref{ass:GC}.
  
  Assumption~\eqref{ass:SH} is a straightforward consequence of Assumption~\eqref{ass:MVs}.
  
  We finally let $\mu \in \Ps(\R)$ and take a sequence of independent random variables $(Y_n)_{n \geq 1}$ on some probability space $(\Omega,\mathcal{A},\Pr)$ with identical distribution $\mu$. For all $n \geq 2$, 
  \begin{equation*}
    \Exp[\intene^\sharp_n(Y_1, \ldots, Y_n)] = \frac{1}{2n^2} \sum_{\substack{i,j=1\\ i\not=j}}^n \Exp[W^\sharp(Y_1-Y_2)] = \frac{n-1}{n} \IntEne^\sharp[\mu],
  \end{equation*}
  which leads to Assumption~\eqref{ass:CC} and completes the proof.
\end{proof}

We now address the general case $W=W^\sharp+W^\flat$, with $W^\flat \not\equiv 0$. We decompose the energy functional $\IntEne$, defined by~\eqref{eq:MVIntEne}, as
\begin{equation*}
  \IntEne = \IntEne^\sharp+\IntEne^\flat,
\end{equation*}
with obvious definitions for $\IntEne^\sharp$ and $\IntEne^\flat$. By Lemma~\ref{lem:MV-Ws} and Theorems~\ref{theo:quot} and~\ref{theo:Wass}, the sequences $\bPP^\sharp_n$ and $\tPP^\sharp_n$ associated with $\IntEne^\sharp$ satisfy the large deviation principles of Corollary~\ref{cor:MV}, with respective rate functions denoted by $\bRate^\sharp$ and $\tRate^\sharp$. On the other hand, 
\begin{equation*}
  \frac{\dd\bPP_n}{\dd\bPP^\sharp_n}[\bmu] = \frac{\tZ^\sharp_n}{\tZ_n}\exp\left(-\frac{2n}{\sigma^2}\bIntEne^\flat[\bmu]\right), \qquad \frac{\dd\tPP_n}{\dd\tPP^\sharp_n}[\tmu] = \frac{\tZ^\sharp_n}{\tZ_n}\exp\left(-\frac{2n}{\sigma^2}\IntEne^\flat[\tmu]\right),
\end{equation*}
with an obvious definition for $\tZ^\sharp_n$.

If $\ell = 1$, then by Assumption~\eqref{ass:MVf}, $\bIntEne^\flat$ is a bounded and continuous functional on $\bPs(\R^d)$, so that the application of the Laplace-Varadhan Lemma~\cite[Theorem~II.7.2, p.~52]{Ell85} is straightforward and yields the first part of Corollary~\ref{cor:MV}.

\begin{rk}[On the Laplace-Varadhan Lemma]
  The statement of the Laplace-Varadhan Lemma in~\cite[Theorem~II.7.2, p.~52]{Ell85} requires the state space to be Polish, which is not proved for $\bPs(\R^d)$ in the present article. However, a careful examination of the proof of this theorem shows that this assumption is in fact not necessary. More generally, we refer to~\cite[Section~4.3]{DemZei10} for an exposition of Varadhan's Lemma and various developments on regular (and in particular metric) topological spaces, which are not necessarily Polish.
\end{rk}

Let us now assume that $\ell>1$, and fix $p \in [\max(1,\ell'),\ell)$, where $\ell' \in [0,\ell)$ is given by Assumption~\eqref{ass:MVf}. The functional $\IntEne^\flat$ is continuous on $\tPs_p(\R^d)$, but not necessarily bounded, so that following~\cite[Theorem~II.7.2, p.~52]{Ell85} and~\cite[Lemma~4.3.8, p.~138]{DemZei10}, we shall check the exponential moment condition
\begin{equation}\label{eq:unifexpint}
  \limsup_{n \to +\infty} \frac{1}{n} \log \int_{\tmu \in \tPs_p(\R^d)} \exp\left(-\frac{2n\gamma}{\sigma^2}\IntEne^\flat[\tmu]\right) \dd\tPP^\sharp_n[\tmu] < +\infty,
\end{equation}
for some $\gamma>1$ --- in fact, since any multiple of $\IntEne^\flat$ also satisfies Assumption~\eqref{ass:MVf}, this condition should hold for any $\gamma \in \R$. 

Taking~\eqref{eq:unifexpint} for granted, the Laplace-Varadhan Lemma~\cite[Theorem~II.7.2, p.~52]{Ell85} allows to transfer the large deviation principle from $\tPP^\sharp_n$ to $\tPP_n$ on $\tPs_p(\R^d)$, for any $p \in [\max(1,\ell'),\ell)$. This result is then extended on $\tPs_p(\R^d)$, for any $p \in [1,\ell)$, and to $\bPs(\R^d)$, by the use of the Contraction Principle~\cite[Theorem~4.2.1, p.~126]{DemZei10}, which completes the proof of Corollary~\ref{cor:MV}.

\begin{proof}[Proof of~\eqref{eq:unifexpint}]
  The argument is similar to the proof of Lemma~\ref{lem:expcontrol}. Let us fix $\gamma > 1$, and rewrite
  \begin{equation}\label{eq:unifexpint:pf1}
    \begin{aligned}
      & \int_{\tmu \in \tPs_p(\R^d)} \exp\left(-\frac{2n\gamma}{\sigma^2}\IntEne^\flat[\tmu]\right) \dd\tPP^\sharp_n[\tmu]\\
      & \qquad = \frac{\displaystyle \int_{\tbx \in \Mdn} \exp\left(-\frac{2n}{\sigma^2}\left(\gamma\intene^\flat_n(\tbx)+\intene^\sharp_n(\tbx)\right)\right)\dd\tbx}{\displaystyle \int_{\tbx \in \Mdn} \exp\left(-\frac{2n}{\sigma^2}\intene^\sharp_n(\tbx)\right)\dd\tbx}.
    \end{aligned}
  \end{equation}
  Assumption~\eqref{ass:MVf} and Jensen's Inequality imply that there exists $C \geq 0$ such that, for all $\tbx \in \Mdn$,
  \begin{equation*}
    \left|\gamma\intene^\flat_n(\tbx)\right| \leq \frac{C\gamma}{2}\left(1 + \frac{2^{\ell'}}{n}\sum_{i=1}^n |\tx_i|^{\ell'}\right).
  \end{equation*}
  By Hölder's Inequality and Assumption~\eqref{ass:MVs},
  \begin{equation*}
    \frac{1}{n}\sum_{i=1}^n |\tx_i|^{\ell'} \leq \left(\frac{1}{n}\sum_{i=1}^n |\tx_i|^\ell\right)^{\ell'/\ell} \leq \left(\frac{1}{\kappa_\ell}\intene_n^\sharp(\tbx)\right)^{\ell'/\ell},
  \end{equation*}
  so that
  \begin{equation*}
    \gamma\intene^\flat_n(\tbx) \geq -\frac{C\gamma}{2}\left(1 + \frac{2^{\ell'}}{\kappa_\ell^{\ell'/\ell}}\left(\intene_n^\sharp(\tbx)\right)^{\ell'/\ell}\right),
  \end{equation*}
  and for any $\epsilon \in (0,1)$, there exists $L \geq 0$ such that, for all $n \geq 2$, for all $\tbx \in \Mdn$, the condition
  \begin{equation*}
    \intene_n^\sharp(\tbx) \geq L
  \end{equation*}
  implies that 
  \begin{equation*}
    \gamma\intene^\flat_n(\tbx) + \intene_n^\sharp(\tbx) \geq (1-\epsilon) \intene_n^\sharp(\tbx).
  \end{equation*}
  Studying the integral in the numerator of the right-hand side of~\eqref{eq:unifexpint:pf1} separately on the domains $\{\intene_n^\sharp(\tbx) < L\}$ and on its complement, we get the bound
  \begin{equation*}
    \begin{aligned}
      & \int_{\tmu \in \tPs_p(\R^d)} \exp\left(-\frac{2n\gamma}{\sigma^2}\IntEne^\flat[\tmu]\right) \dd\tPP^\sharp_n[\tmu]\\
      & \leq \exp\left(\frac{nC\gamma}{\sigma^2}\left(1 + \frac{2^{\ell'}L^{\ell'/\ell}}{\kappa_\ell^{\ell'/\ell}}\right)\right) + \frac{\displaystyle \int_{\tbx \in \Mdn} \exp\left(-\frac{2n}{\sigma^2}(1-\epsilon)\intene^\sharp_n(\tbx)\right)\dd\tbx}{\displaystyle \int_{\tbx \in \Mdn} \exp\left(-\frac{2n}{\sigma^2}\intene^\sharp_n(\tbx)\right)\dd\tbx},
    \end{aligned}
  \end{equation*}
  and the same change of variable as in the proof of Lemma~\ref{lem:expcontrol} allows to complete the proof of~\eqref{eq:unifexpint}.
\end{proof}
\subsection{\texorpdfstring{\RB}{RB}-model}\label{ss:RB} The next lemma allows to deduce Corollary~\ref{cor:RB} from a straightforward application of Theorem~\ref{theo:quot}.

\begin{lem}[Assumptions of Theorem~\ref{theo:quot} for the $\RB$-model]
  If the flux function $B$ satisfies the assumptions of Corollary~\ref{cor:RB}, then the associated energy functional $\IntEne$ defined by~\eqref{eq:RBIntEne} satisfies Assumptions~\eqref{ass:TI}, \eqref{ass:F}, \eqref{ass:LSC}, \eqref{ass:GC} with $\ell=1$, \eqref{ass:SH} and~\eqref{ass:CC}.
\end{lem}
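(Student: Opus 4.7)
The plan is to verify the six assumptions in turn, with the heart of the argument being a two-sided sandwich between $B$ and the quadratic $u(1-u)$.

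\textbf{Easy assumptions.} Assumption~\eqref{ass:TI} follows at once from $F_{\tau_y\mu}(\cdot) = F_\mu(\cdot + y)$ combined with translation invariance of the Lebesgue measure. For~\eqref{ass:F}, if $\mathrm{supp}\,\mu\subset[-R,R]$ then by~\eqref{eq:RBti} one has $B(F_\mu(x))=0$ outside $[-R,R]$, and $B$ is continuous on $[0,1]$ so $\IntEne[\mu]\leq 2R\sup_{[0,1]}B<+\infty$. For~\eqref{ass:LSC}, weak convergence $\mu_n\to\mu$ implies $F_{\mu_n}(x)\to F_\mu(x)$ at every continuity point of $F_\mu$ (hence almost every $x$), so continuity of $B$ combined with $B\geq 0$ allows Fatou's lemma. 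Finally~\eqref{ass:SH} holds as an \emph{equality}: the substitution $y=x/(1-\epsilon)$ inside $\intene_n$ gives $\intene_n((1-\epsilon)\bx)=(1-\epsilon)\intene_n(\bx)$ since $F_{\pi_n((1-\epsilon)\bx)}(x)=F_{\pi_n(\bx)}(x/(1-\epsilon))$.

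\textbf{Key lemma.} Using~\eqref{eq:RBti}, \eqref{eq:Oleinik}, \eqref{eq:Lax} and $C^1$ regularity of $B$, I would establish the existence of $c,C>0$ with
\begin{equation*}
  c\,u(1-u)\leq B(u)\leq C\,u(1-u),\qquad u\in[0,1].
\end{equation*}
Indeed, by~\eqref{eq:Lax}, $B(u)/u\to B'(0)>0$ as $u\to 0^+$ and $B(u)/(1-u)\to -B'(1)>0$ as $u\to 1^-$, which combined with $u(1-u)\sim u$ near $0$ and $\sim 1-u$ near $1$ gives both inequalities near the endpoints; on any $[\delta,1-\delta]$, the condition~\eqref{eq:Oleinik} and continuity of $B$ yield a positive minimum, and $u(1-u)$ is bounded above and below by positive constants there.

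\textbf{Growth control.} By Remark~\ref{rk:MVRB}, the key lemma gives
\begin{equation*}
  \tfrac{c}{2}\iint|x-y|\,\dd\mu(x)\dd\mu(y)\leq \IntEne[\mu]\leq \tfrac{C}{2}\iint|x-y|\,\dd\mu(x)\dd\mu(y).
\end{equation*}
If $\mu\notin\Ps_1(\R)$ then the double integral is $+\infty$: choosing the median $m$ of $\mu$, one has $F_\mu(x)\geq 1/2$ for $x\geq m$, hence $\int_m^\infty F_\mu(1-F_\mu)\,\dd x\geq \tfrac12\int(y-m)^+\,\dd\mu(y)=+\infty$ (and likewise on the left). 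For centered $\tmu\in\tPs_1(\R)$, Jensen's inequality $|x|=\bigl|x-\int y\,\dd\tmu(y)\bigr|\leq\int|x-y|\,\dd\tmu(y)$ integrated against $\tmu$ gives $\int|x|\,\dd\tmu\leq\iint|x-y|\,\dd\tmu\,\dd\tmu$, whence~\eqref{ass:GC} holds with $\ell=1$ and $\kappa_1=c/2$.

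\textbf{Chaos compatibility.} By Remark~\ref{rk:CCLSC} only the $\limsup$ needs attention, and the case $\IntEne[\mu]=+\infty$ follows automatically from the $\liminf$ bound, so assume $\mu\in\Ps_1(\R)$. Letting $F_n$ denote the empirical CDF of iid $Y_1,\ldots,Y_n\sim\mu$, Fubini and $B\geq 0$ give $\Exp[\intene_n(Y_1,\ldots,Y_n)]=\int_\R\Exp[B(F_n(x))]\,\dd x$. Since $nF_n(x)\sim\mathrm{Binomial}(n,F_\mu(x))$, the inner expectation is the $n$-th Bernstein polynomial of $B$ evaluated at $F_\mu(x)$, which converges uniformly on $[0,1]$ to $B(F_\mu(x))$ by Bernstein's classical theorem. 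For dominated convergence, the upper comparison $B(u)\leq C\,u(1-u)$ together with the elementary identity $\Exp[F_n(1-F_n)]=(1-1/n)F_\mu(1-F_\mu)$ provides the integrable envelope $\Exp[B(F_n(x))]\leq C\,F_\mu(x)(1-F_\mu(x))$, which is integrable on $\R$ precisely because $\mu\in\Ps_1(\R)$. Passing to the limit completes~\eqref{ass:CC}. The main technical step is the two-sided comparison $B\asymp u(1-u)$: it simultaneously yields~\eqref{ass:GC} with the correct index $\ell=1$, identifies the effective domain of $\IntEne$, and supplies the dominating function needed in the Fubini/DCT argument for~\eqref{ass:CC}.
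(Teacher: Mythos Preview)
Your proof is correct. The verification of~\eqref{ass:TI}, \eqref{ass:F}, \eqref{ass:LSC}, \eqref{ass:SH} and~\eqref{ass:GC} matches the paper's almost verbatim (the paper defers the Jensen step in~\eqref{ass:GC} to Lemma~\ref{lem:MV-Ws}, which contains exactly the inequality you wrote out). The one genuine divergence is in~\eqref{ass:CC}.

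For~\eqref{ass:CC} the paper exploits the $C^1$ regularity of $B$ differently: setting $C=\sup_{[0,1]}|B'|$, it bounds
\[
  \left|\IntEne[\pi_n]-\IntEne[\mu]\right| \leq C\int_\R |F_{\pi_n}(x)-F_\mu(x)|\,\dd x = C\,\Ws_1(\pi_n,\mu),
\]
and then invokes the convergence $\Exp[\Ws_1(\pi_n,\mu)]\to 0$ for $\mu\in\Ps_1(\R)$ (cited from Bobkov--Ledoux). Your route instead recognises $\Exp[B(F_n(x))]$ as a Bernstein polynomial, uses uniform convergence of Bernstein approximants, and supplies the dominating function via the \emph{upper} sandwich $B(u)\leq C\,u(1-u)$ together with the binomial variance identity. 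Both arguments are short; yours is more self-contained (no external Wasserstein estimate needed) and makes the two-sided comparison $B\asymp u(1-u)$ do double duty for~\eqref{ass:GC} and~\eqref{ass:CC}, while the paper's Lipschitz/$\Ws_1$ argument uses only the lower sandwich and would extend verbatim to flux functions that are merely Lipschitz rather than $C^1$.
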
 
\begin{proof}
  Assumptions~\eqref{ass:TI} and~\eqref{ass:F} are straightforward.

  To check Assumption~\eqref{ass:LSC}, we recall that the weak convergence of probability measures implies the convergence $\dd x$-almost everywhere of their cumulative distribution functions, so that the lower semicontinuity of $\IntEne$ follows from Fatou's Lemma and the fact that $B$ is continuous and nonnegative.

  Let us now define
  \begin{equation*}
    \kappa := \inf_{u \in (0,1)} \frac{B(u)}{2u(1-u)}.
  \end{equation*}
  The combination of the conditions~\eqref{eq:RBti},~\eqref{eq:Oleinik} and~\eqref{eq:Lax} implies that $\kappa \in (0,+\infty)$. As a consequence, for all $\mu \in \Ps(\R)$,
  \begin{equation*}
    \IntEne[\mu] \geq 2\kappa \int_{x \in \R} F_\mu(x)(1-F_\mu(x))\dd x = \kappa \iint_{x,y \in \R} |x-y|\dd\mu(x)\dd\mu(y),
  \end{equation*}
  which by Lemma~\ref{lem:MV-Ws} implies Assumption~\eqref{ass:GC} with $\ell=1$.

  Assumption~\eqref{ass:SH} follows from the remark that
  \begin{equation*}
    \intene_n(\bx) = - \frac{1}{n}\sum_{k=1}^n b_n(k) x_{(k)}, \qquad x_{(1)} \leq \cdots \leq x_{(n)},
  \end{equation*}
  so that $(1-\epsilon)\intene_n(\bx) = \intene_n((1-\epsilon)\bx)$.

  We finally let $\mu \in \Ps(\R)$ and take a sequence of independent random variables $(Y_n)_{n \geq 1}$ on some probability space $(\Omega,\mathcal{A},\Pr)$ with identical distribution $\mu$. If $\mu \not\in \Ps_1(\R)$, then by Remark~\ref{rk:CCLSC}, 
  \begin{equation*}
    \lim_{n \to +\infty} \Exp[\intene_n(Y_1, \ldots, Y_n)] = \IntEne[\mu] = +\infty.
  \end{equation*}
  On the contrary, if $\mu \in \Ps_1(\R)$, let us write
  \begin{equation*}
    \begin{aligned}
      \left|\Exp\left[\intene_n(Y_1, \ldots, Y_n)\right] - \IntEne[\mu]\right| & = \left|\Exp\left[\IntEne[\pi_n] - \IntEne[\mu]\right]\right| \leq \Exp\left[\left|\IntEne[\pi_n] - \IntEne[\mu]\right|\right],
    \end{aligned}
  \end{equation*}
  where $\pi_n$ is a short notation for $\pi_n(Y_1, \ldots, Y_n)$. Denoting $C = \sup_{u \in [0,1]} |B'(u)|$, we get
  \begin{equation*}
    \left|\IntEne[\pi_n] - \IntEne[\mu]\right| \leq C \int_{x \in \R} |F_{\pi_n}(x)-F_\mu(x)|\dd x = C \Ws_1(\pi_n,\mu),
  \end{equation*}
  where $\Ws_1$ is the Wasserstein distance of order $1$. That this distance coincides with the $L^1$ distance of cumulative distribution functions is a specific feature of probability measures on the real line, see~\cite[Theorem~2.9, p.~16]{BobLed14}. By~\cite[Theorem~2.14, p.~20]{BobLed14}, $\Exp[\Ws_1(\pi_n,\mu)]$ converges to $0$ when $n$ grows to infinity, which shows that $\IntEne$ satisfies Assumption~\eqref{ass:CC}.
\end{proof}

\subsection{Application to the study of atypical capital distribution}\label{ss:SPT} It was proved in~\cite{Rey15} that under the assumptions of Corollary~\ref{cor:RB}, $\tPP_n$ converges weakly, on $\tPs_p(\R)$ for any $p \geq 1$, to the Dirac mass $\delta_{\tmu_\infty}$, where $\tmu_\infty$ is the unique centered stationary measure of the nonlinear diffusion process describing the mean-field limit of~\eqref{eq:RBsde} --- we refer to~\cite{Shk12, JouRey13, DemShkVarZei16} for associated propagation of chaos results in the space of sample-paths. This measure satisfies the stationary nonlinear Fokker-Planck equation
\begin{equation}\label{eq:EulLag}
  0 = \frac{\sigma^2}{2}\partial_{xx} \tmu_\infty - \partial_x\left(b(F_{\tmu_\infty})\tmu_\infty\right), \qquad b(u) := B'(u),
\end{equation}
which implies that it possesses a density $\tp_\infty$ with respect to the Lebesgue measure on $\R$, which solves the fixed point relation
\begin{equation}\label{eq:tpinfty}
  \begin{aligned}
    & \tp_\infty(x) = \frac{1}{\tz_\infty}\exp\left(\frac{2}{\sigma^2}\int_{y=0}^x b(F_{\tmu_\infty}(y))\dd y\right),\\
    & \tz_\infty = \int_{x \in \R} \exp\left(\frac{2}{\sigma^2}\int_{y=0}^x b(F_{\tmu_\infty}(y))\dd y\right)\dd x.
  \end{aligned}
\end{equation}
As a consequence, if we let $(\tX_1, \ldots, \tX_n)$ be a random vector with distribution $\tP_n$, and $\tX_{\infty,1}, \ldots, \tX_{\infty,n}$ be independent random variables with identical distribution $\tmu_\infty$, then $\pi_n(\tX_1, \ldots, \tX_n)$ and $\pi_n(\tX_{\infty,1}, \ldots, \tX_{\infty,n})$ satisfy the same weak law of large numbers, and converge to $\tmu_\infty$. However, the large deviations of these random empirical measures are respectively described by Corollary~\ref{cor:RB} (in the orbit space $\bPs(\R)$), and by Sanov's Theorem. We first examine the difference between the associated rate functions, and then detail an application of this result to the estimation of the probability of atypical capital distribution in the context of Stochastic Portfolio Theory.

\subsubsection{Difference between rate functions} With the notations introduced above, let us define
\begin{equation*}
  \PP_{\infty,n} := \tmu_\infty^{\otimes n} \circ \pi_n^{-1}, \qquad \bPP_{\infty,n} := \PP_{\infty,n} \circ \rho^{-1}.
\end{equation*}
By Sanov's Theorem and the Contraction Principle, the sequence $\bPP_{\infty,n}$ satisfies a large deviation principle on $\bPs(\R)$, with good rate function
\begin{equation*}
  \bRate_\infty[\bmu] := \inf_{\mu \in \Ps(\R) : \rho(\mu)=\bmu} \RelEnt[\mu|\tmu_\infty].
\end{equation*}

\begin{lem}[Comparison of rate functions]\label{lem:comprate}
  Under the assumptions of Corollary~\ref{cor:RB}, we have, for all $\bmu \in \bPs(\R)$, for all $\mu \in \Ps(\R)$ such that $\rho(\mu)=\bmu$,
  \begin{equation}\label{eq:dlrate}
    \bRate[\bmu] = \RelEnt[\mu|\tmu_\infty] + \frac{2}{\sigma^2}\int_{x \in \R} \Gamma\left(F_\mu(x),F_{\tmu_\infty}(x)\right)\dd x,
  \end{equation}
  where
  \begin{equation*}
    \Gamma(u,v) := B(u)-B(v)-b(v)(u-v).
  \end{equation*}
  As a consequence, if $B$ is concave, then for all $\bmu \in \bPs(\R)$,
  \begin{equation}\label{eq:comrate}
    \bRate[\bmu] \leq \bRate_\infty[\bmu].
  \end{equation}
\end{lem}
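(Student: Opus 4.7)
My plan is to derive \eqref{eq:dlrate} by direct expansion from the definition of relative entropy, using the explicit form \eqref{eq:tpinfty} of the density $\tp_\infty$, and then obtain \eqref{eq:comrate} as an immediate consequence of the concavity of $B$. I would first restrict attention to representatives $\mu \in \rho^{-1}(\bmu)$ admitting a density $p$ with $\Ent[\mu] < +\infty$ and $\IntEne[\mu] < +\infty$, since otherwise both sides of \eqref{eq:dlrate} are infinite: indeed $\tp_\infty > 0$ everywhere by \eqref{eq:tpinfty}, so any $\mu$ with a density is automatically absolutely continuous with respect to $\tmu_\infty$, and the lower bound $B(u) \geq 2\kappa u(1-u)$ already exploited in Subsection~\ref{ss:RB} forces $\mu \in \Ps_1(\R)$ whenever $\IntEne[\mu] < +\infty$.

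Setting $H(x) := \int_0^x b(F_{\tmu_\infty}(y))\,\dd y$, the relation \eqref{eq:tpinfty} rewrites $\log \tp_\infty = -\log \tz_\infty + (2/\sigma^2) H$, which yields
\begin{equation*}
\RelEnt[\mu|\tmu_\infty] = \Ent[\mu] + \log \tz_\infty - \frac{2}{\sigma^2}\int_{\R} p(x) H(x)\,\dd x,
\end{equation*}
and the specialisation $\mu = \tmu_\infty$ (for which $\RelEnt=0$) supplies the companion identity $\int \tp_\infty H\,\dd x = (\sigma^2/2)(\Ent[\tmu_\infty] + \log \tz_\infty)$. The key step of the plan is an integration by parts: since $\psi := F_\mu - F_{\tmu_\infty}$ vanishes at $\pm\infty$ and satisfies $\psi' = p - \tp_\infty$, one expects
\begin{equation*}
\int_{\R} (p - \tp_\infty)(x) H(x)\,\dd x = -\int_{\R} (F_\mu - F_{\tmu_\infty})(x)\,b(F_{\tmu_\infty}(x))\,\dd x.
\end{equation*}
Combining these three relations, all the $\log \tz_\infty$ and $\int \tp_\infty H\,\dd x$ contributions cancel and one arrives at
\begin{equation*}
\RelEnt[\mu|\tmu_\infty] = \Ent[\mu] - \Ent[\tmu_\infty] + \frac{2}{\sigma^2}\int_{\R} (F_\mu - F_{\tmu_\infty})(x)\,b(F_{\tmu_\infty}(x))\,\dd x.
\end{equation*}
Now the weak convergence $\tPP_n \to \delta_{\tmu_\infty}$ of~\cite{Rey15}, combined with the large deviation principle of Corollary~\ref{cor:RB}, forces $\FreEne_\star = \FreEne[\tmu_\infty]$, so that $\bRate[\bmu] = \FreEne[\mu] - \FreEne[\tmu_\infty] = \Ent[\mu] - \Ent[\tmu_\infty] + (2/\sigma^2)\int (B(F_\mu) - B(F_{\tmu_\infty}))\,\dd x$; subtracting the previous identity produces exactly \eqref{eq:dlrate}.

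For the consequence \eqref{eq:comrate}, concavity of $B$ immediately gives $\Gamma(u,v) = B(u) - B(v) - b(v)(u-v) \leq 0$ for all $u,v \in [0,1]$, so \eqref{eq:dlrate} yields $\bRate[\bmu] \leq \RelEnt[\mu|\tmu_\infty]$ for every $\mu \in \rho^{-1}(\bmu)$; taking the infimum over such $\mu$ delivers \eqref{eq:comrate}. The hard part will be to justify the integration by parts: by Lax' condition \eqref{eq:Lax}, $B'(0) > 0 > B'(1)$, so $H(x)$ grows linearly at both ends and the vanishing of the boundary contribution $[(F_\mu - F_{\tmu_\infty})H]_{-\infty}^{+\infty}$ requires $|F_\mu - F_{\tmu_\infty}|$ to decay strictly faster than $1/|x|$ at infinity. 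The explicit expression \eqref{eq:tpinfty} yields exponential decay of $F_{\tmu_\infty}$ at $-\infty$ and of $1 - F_{\tmu_\infty}$ at $+\infty$, but the analogous tail control for $F_\mu$ must be extracted from the finiteness of $\RelEnt[\mu|\tmu_\infty]$ through an entropy/tail inequality, which I expect to be the delicate technical point of the argument.
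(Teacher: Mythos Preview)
Your approach is essentially the same as the paper's: expand $\bRate[\bmu]=\FreEne[\mu]-\FreEne[\tmu_\infty]$, expand $\RelEnt[\mu|\tmu_\infty]$ using the explicit form of $\log\tp_\infty$, and subtract. The only difference is in how you pass from $\int_{\R}(p-\tp_\infty)H\,\dd x$ to $-\int_{\R}(F_\mu-F_{\tmu_\infty})\,b(F_{\tmu_\infty})\,\dd x$: you frame it as an integration by parts and worry about the boundary term, whereas the paper simply applies Fubini's Theorem to the double integral $\int_{x}\int_0^x (\tp_\infty(x)-p(x))\,b(F_{\tmu_\infty}(y))\,\dd y\,\dd x$, which only requires $\int_{\R}|p-\tp_\infty|\,|H|\,\dd x<+\infty$; since $|H(x)|\leq C(1+|x|)$ by boundedness of $b$, this follows directly from $\mu\in\Ps_1(\R)$.

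Your ``delicate technical point'' is therefore easier than you anticipate, and in particular does not require any control coming from $\RelEnt[\mu|\tmu_\infty]<+\infty$. Even if you insist on integration by parts, the boundary term vanishes from $\mu\in\Ps_1(\R)$ alone: the standard estimate $x(1-F_\mu(x))\leq\int_x^{+\infty} t\,p(t)\,\dd t\to 0$ (and its analogue at $-\infty$) gives $|F_\mu(x)-F_{\tmu_\infty}(x)|=o(1/|x|)$, which kills the linear growth of $H$. The Fubini route simply avoids having to say any of this.
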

\begin{proof}
  As a preliminary remark, we observe that the convergence of $\tPP_n$ to $\delta_{\tmu_\infty}$ implies that $\bPP_n$ converges weakly to $\delta_{\bmu_\infty}$, with $\bmu_\infty := \rho(\tmu_\infty)$, on $\bPs(\R)$. Combining this weak law of large numbers with Corollary~\ref{cor:RB}, we get that $\bmu_\infty$ is the unique zero of $\bRate$, and therefore that
\begin{equation*}
  \FreEne_\star = \bFreEne[\bmu_\infty] = \FreEne[\tmu_\infty].
\end{equation*}
Notice that~\eqref{eq:EulLag} is the optimality condition associated with the definition of $\FreEne_\star$.

  We now let $\bmu \in \bPs(\R)$ and $\mu \in \Ps(\R)$ be such that $\rho(\mu)=\bmu$. If $\mu \not\in \Ps_1(\R)$, then by Assumption~\eqref{ass:GC}, $\FreEne[\mu]=+\infty$ so that $\bRate[\bmu]=+\infty$; besides, it is known that $\tmu_\infty$ has exponential tails~\cite{JouRey13,JouRey15} so that $\bRate_\infty[\bmu]=+\infty$. Likewise, if $\mu$ is not absolutely continuous with respect to the Lebesgue measure on $\R$, then both $\bRate[\bmu]$ and $\bRate_\infty[\bmu]$ are infinite.
  
  We now assume that $\mu \in \Ps_1(\R)$ and has a density $p$ with respect to the Lebesgue measure, and write
  \begin{equation*}
    \begin{aligned}
      \bRate[\bmu] & = \bFreEne[\bmu]-\FreEne_\star\\
      & = \FreEne[\mu]-\FreEne[\tmu_\infty]\\
      & = \Ent[\mu] - \Ent[\tmu_\infty] + \frac{2}{\sigma^2}\int_{x \in \R} \left(B(F_\mu(x))-B(F_{\tmu_\infty}(x))\right)\dd x.
    \end{aligned}
  \end{equation*}
  Besides,
  \begin{equation*}
    \RelEnt[\mu|\tmu_\infty] = \Ent[\mu]-\Ent[\tmu_\infty] + \int_{x \in \R} \left(\tp_\infty(x)-p(x)\right)\log \tp_\infty(x) \dd x. 
  \end{equation*}
  By~\eqref{eq:tpinfty},
  \begin{equation*}
    \log \tp_\infty(x) = -\log \tz_\infty + \frac{2}{\sigma^2} \int_{y=0}^x b(F_{\tmu_\infty}(y))\dd y,
  \end{equation*}
  which after the use of Fubini's Theorem yields
  \begin{equation*}
    \begin{aligned}
      & \int_{x \in \R} \left(\tp_\infty(x)-p(x)\right)\log \tp_\infty(x) \dd x\\
      & \qquad = \frac{2}{\sigma^2}\int_{y \in \R} b(F_{\tmu_\infty}(y))\left(F_\mu(y)-F_{\tmu_\infty}(y)\right)\dd y,
    \end{aligned}
  \end{equation*}
  and leads to~\eqref{eq:dlrate}.
  
  If $B$ is concave, then $\Gamma(u,v) \leq 0$ for all $u,v \in [0,1]$, so that, for all $\bmu \in \bPs(\R)$,~\eqref{eq:dlrate} yields
  \begin{equation*}
    \bRate[\bmu] \leq \RelEnt[\mu|\tmu_\infty],
  \end{equation*}
  for all $\mu \in \Ps(\R)$ such that $\rho(\mu)=\bmu$. Taking the infimum over $\mu$ of the right-hand side results in~\eqref{eq:comrate} and completes the proof.
\end{proof}

\subsubsection{Capital distribution curves} In the framework of \emph{Stochastic Portfolio Theory}~\cite{Fer02, BanFerKar05}, systems of rank-based interacting diffusions of the form~\eqref{eq:RBsde} serve as first-order approximations of stable equity markets, in the sense that on a market with $n$ companies, the process $X_i(t)$ provides a good representation of the behaviour of the logarithmic capitalisation of the $i$-th company. Thus, the proportion of the total capital held by this company is given by its \emph{market weight}
\begin{equation*}
  \mu_i(t) := \frac{\exp(X_i(t))}{\sum_{j=1}^n \exp(X_j(t))}.
\end{equation*}
Using the reverse order statistics notation
\begin{equation*}
  \mu_{[1]}(t) \geq \cdots \geq \mu_{[n]}(t),
\end{equation*}
the \emph{capital distribution curve} is defined as the log-log plot of the mapping $m \mapsto \mu_{[m]}(t)$ and summarises in which manner the whole capital of a market is spread among the companies. 

Notice that the market weights are invariant by translation of $X_1(t), \ldots, X_n(t)$, so that the vector $(\mu_{[1]}(t), \ldots, \mu_{[n]}(t))$ (and therefore the associated capital distribution curve) only depends on $\rho(\pi_n(X_1(t), \ldots, X_n(t)))$. Besides, empirical studies (see for instance~\cite[Figure~5.1]{Fer02}) show that the capital distribution curves are remarkably stable over long times. These remarks suggest to study the statistical distribution of the vector $(\mu_{[1]}, \ldots, \mu_{[n]})$ under the probability measure $\bPP_n$~\cite{BanFerKar05, ChaPal10, JouRey15}. 

When $n$ grows to infinity, the law of large numbers for $\bPP_n$ prescribes a deterministic form for the (suitably rescaled) capital distribution curve, which was observed to fit empirical data in~\cite{JouRey15}. This defines a distribution of capital which we call \emph{typical}. If one wants to study the capital distribution without having to sample the high-dimensional vector $(\tX_1, \ldots, \tX_n)$ from the distribution $\tP_n$, then the discussion above shows that using independent random variables $\tX_{\infty,1}, \ldots, \tX_{\infty,n}$ identically distributed according to $\tmu_\infty$ as a surrogate model provides correct results concerning this typical behaviour. Such a surrogate model was for instance employed in~\cite{JouRey15} to evaluate the performance of diversity-weighted portfolios, and in~\cite[Section~3]{Bru16} to study hitting times and rank-rank correlations.

On the contrary, Lemma~\ref{lem:comprate} shows that the fluctuations of the capital distribution far from its typical behaviour, due to finite-size effects, and which are described by the large deviations of $\bPP_n$, are not correctly captured by the surrogate model in general. In short: both sequences $\bPP_n$ and $\bPP_{\infty,n}$ concentrate around $\bmu_\infty$, but their rate functions differ. On a more quantitative level, if the flux function $B$ is concave, then at the level of large deviations, the probability of an atypical distribution of the capital is \emph{always underestimated} by  $\bPP_{\infty,n}$ (the surrogate model) with respect to $\bPP_n$. In other words, the interaction between the stocks \emph{increases} the probability of an atypical capital distribution.

For similar works on the study of the fluctuations of mean-field rank-based interacting diffusions around, or far from, their typical behaviour, we refer to the respective works by Kolli and Shkolnikov~\cite{KolShk16}, and Dembo, Shkolnikov, Varadhan and Zeitouni~\cite{DemShkVarZei16}. We also mention that inequalities between rate functions for sequence of probability measures having the same law of large numbers, such as in Lemma~\ref{lem:comprate}, naturally provide comparisons between asymptotic variances in Monte-Carlo numerical methods. For more details in this direction, we refer to the work by Rey-Bellet and Spiliopoulos~\cite{ReySpi15} and the references therein. 

\appendix\section{Metrisability of the quotient topology on \texorpdfstring{$\bPs(\R^d)$}{bP}}\label{s:app}

By definition, the \emph{quotient topology} on $\bPs(\R^d)$ is the strongest topology making the orbit map $\rho$ continuous. The purpose of this appendix is to prove that this topology is metrisable, which is in general not the case for quotient topologies.

We first note that the definition of the quotient topology implies the following characterisation of open and closed sets.

\begin{lem}[Open and closed sets in $\bPs(\R^d)$]\label{lem:openclosedsets}
  A subset $\bar{A}$ of $\bPs(\R^d)$ is open (respectively closed) if and only if the set $\rho^{-1}(\bar{A})$ is open (respectively closed) in $\Ps(\R^d)$. 
\end{lem}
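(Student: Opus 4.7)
The plan is to unpack the definition of the quotient topology, as the statement is essentially a reformulation of that definition together with the standard duality between open and closed sets. I would first introduce the collection
\begin{equation*}
  \mathcal{T} := \left\{\bar{A} \subset \bPs(\R^d) : \rho^{-1}(\bar{A}) \text{ is open in } \Ps(\R^d)\right\},
\end{equation*}
and verify that $\mathcal{T}$ defines a topology on $\bPs(\R^d)$. This is immediate from the facts that $\rho^{-1}(\emptyset) = \emptyset$, $\rho^{-1}(\bPs(\R^d)) = \Ps(\R^d)$, and that $\rho^{-1}$ commutes with arbitrary unions and finite intersections.

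Second, I would show that $\mathcal{T}$ coincides with the quotient topology on $\bPs(\R^d)$. By the very definition of $\mathcal{T}$, the orbit map $\rho$ is continuous for this topology. Conversely, if $\mathcal{T}'$ is any topology on $\bPs(\R^d)$ making $\rho$ continuous, then for every $\bar{A} \in \mathcal{T}'$ the set $\rho^{-1}(\bar{A})$ is open in $\Ps(\R^d)$, so $\bar{A} \in \mathcal{T}$, hence $\mathcal{T}' \subseteq \mathcal{T}$. Thus $\mathcal{T}$ is the strongest topology on $\bPs(\R^d)$ making $\rho$ continuous, which is the defining property of the quotient topology; in particular, the characterisation of open sets stated in the lemma holds.

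Finally, the corresponding characterisation of closed sets follows by taking complements, thanks to the elementary identity
\begin{equation*}
  \rho^{-1}\bigl(\bPs(\R^d) \setminus \bar{A}\bigr) = \Ps(\R^d) \setminus \rho^{-1}(\bar{A}).
\end{equation*}
Indeed, $\bar{A}$ is closed in $\bPs(\R^d)$ if and only if $\bPs(\R^d) \setminus \bar{A}$ is open, which by the first part is equivalent to $\Ps(\R^d) \setminus \rho^{-1}(\bar{A})$ being open in $\Ps(\R^d)$, i.e. to $\rho^{-1}(\bar{A})$ being closed. No real obstacle is to be expected: the lemma is essentially a direct translation of the defining universal property of the quotient topology, and the proof reduces to standard set-theoretic bookkeeping.
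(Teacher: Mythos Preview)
Your proposal is correct and matches the paper's approach: the paper does not give a separate proof of this lemma but simply states that it follows from the definition of the quotient topology, and your argument is precisely the standard unpacking of that definition together with the passage to complements for closed sets.
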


Our construction of a metric on $\bPs(\R^d)$ is based on the Prohorov metric on \texorpdfstring{$\Ps(\R^d)$}{P}, which following~\cite[Theorem~6.9, p.~74]{Bil99} can be defined by
\begin{equation}\label{eq:dP}
  \dP(\mu,\nu) := \inf_{\pi} \left\{\epsilon > 0 : \pi(\{(x,y) \in \R^d \times \R^d : |x-y| \geq \epsilon\}) \leq \epsilon\right\},
\end{equation}
where the infimum is taken over all the couplings $\pi$ of $\mu$ and $\nu$. We recall that a sequence of probability measures $\mu_n$ converges weakly to $\mu$ in $\Ps(\R^d)$ if and only if $\dP(\mu_n,\mu)$ converges to $0$, so that the metric topology associated with the Prohorov metric coincides with the topology of weak convergence~\cite[Theorem~6.8, p.~73]{Bil99}.

The following property of the Prohorov metric is immediate.

\begin{lem}[Translation invariance of the Prohorov metric]\label{lem:dPTI}
  For all $\mu, \nu \in \Ps(\R^d)$, for all $y \in \R^d$,
  \begin{equation*}
    \dP(\tau_y\mu,\tau_y\nu) = \dP(\mu,\nu).
  \end{equation*}
\end{lem}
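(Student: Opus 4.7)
The plan is to exploit the fact that the diagonal translation $(x_1,x_2)\mapsto(x_1+y,x_2+y)$ is an isometry of $\R^d\times\R^d$ that preserves the quantity $|x_1-x_2|$ appearing in~\eqref{eq:dP}, and to show that this translation puts couplings of $(\mu,\nu)$ and of $(\tau_y\mu,\tau_y\nu)$ in bijection.

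Fix $y\in\R^d$ and define $\Phi_y:\R^d\times\R^d\to\R^d\times\R^d$ by $\Phi_y(x_1,x_2)=(x_1+y,x_2+y)$. The first step is to check that if $\pi$ is a coupling of $\mu$ and $\nu$, then the pushforward $\pi':=(\Phi_y)_*\pi$ is a coupling of $\tau_y\mu$ and $\tau_y\nu$. This follows directly by computing the two marginals of $\pi'$ from the definition of $\tau_y$: for any bounded measurable $f:\R^d\to\R$,
\begin{equation*}
  \int f(x_1')\dd\pi'(x_1',x_2')=\int f(x_1+y)\dd\pi(x_1,x_2)=\int f(x_1+y)\dd\mu(x_1)=\int f(x_1')\dd\tau_y\mu(x_1'),
\end{equation*}
and likewise for the second marginal.

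The second step is the key observation that, for every $\epsilon>0$,
\begin{equation*}
  \pi'\bigl(\{(x_1',x_2'):|x_1'-x_2'|\geq\epsilon\}\bigr)=\pi\bigl(\{(x_1,x_2):|(x_1+y)-(x_2+y)|\geq\epsilon\}\bigr)=\pi\bigl(\{(x_1,x_2):|x_1-x_2|\geq\epsilon\}\bigr).
\end{equation*}
Taking the infimum of the admissible $\epsilon$ in~\eqref{eq:dP} over all couplings yields $\dP(\tau_y\mu,\tau_y\nu)\leq\dP(\mu,\nu)$.

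The third step is to apply the same argument with $-y$ in place of $y$ to the probability measures $\tau_y\mu$ and $\tau_y\nu$. Since $\tau_{-y}\tau_y\mu=\mu$ and similarly for $\nu$, this gives $\dP(\mu,\nu)\leq\dP(\tau_y\mu,\tau_y\nu)$, and the two inequalities combine to the claimed equality. There is no genuine obstacle here: the proof is essentially a one-line observation about the isometric action of $\Phi_y$, and the only minor point that requires care is being consistent with the sign convention for $\tau_y$ fixed in Section~\ref{s:main}.
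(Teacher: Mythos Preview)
Your proof is correct and is exactly the standard verification behind the paper's claim that the property is ``immediate'': the diagonal translation $\Phi_y$ gives a bijection between couplings of $(\mu,\nu)$ and of $(\tau_y\mu,\tau_y\nu)$ that preserves $|x_1-x_2|$, so the infimum in~\eqref{eq:dP} is unchanged. The paper does not spell this out, but your argument (including the sign check for $\tau_y$) is the natural way to make it rigorous.
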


For all $\bmu,\bnu \in \bPs(\R^d)$, let us define
\begin{equation}\label{eq:bdP}
  \bdP(\bmu,\bnu) := \inf\{\dP(\mu,\nu) : \rho(\mu)=\bmu, \rho(\nu)=\bnu\} \in [0,+\infty).
\end{equation}
For any $\mu, \nu \in \Ps(\R^d)$ such that $\rho(\mu)=\bmu$ and $\rho(\nu)=\bnu$, it is a consequence of Lemma~\ref{lem:dPTI} that $\bdP(\bmu,\bnu)$ rewrites
\begin{equation}\label{eq:bdP2}
  \bdP(\bmu,\bnu) = \inf\{\dP(\mu,\tau_y\nu) : y \in \R^d\}.
\end{equation}

\begin{lem}[Metrisability of $\bPs(\R^d)$]\label{lem:bdP}
  The function $\bdP$ is a metric on $\bPs(\R^d)$, and the associated metric topology is the same as the quotient topology.
\end{lem}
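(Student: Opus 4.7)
The plan is to verify the metric axioms for $\bdP$ and then check that the resulting metric topology coincides with the quotient topology. The identity $\bdP(\bmu,\bmu)=0$ and the symmetry $\bdP(\bmu,\bnu)=\bdP(\bnu,\bmu)$ are immediate from the definition and from the corresponding properties of $\dP$. For the triangle inequality, I fix $\bmu,\bnu,\blambda$ and $\epsilon>0$, pick representatives $\mu$ of $\bmu$, $\nu_1,\nu_2$ of $\bnu$, and $\lambda$ of $\blambda$ with $\dP(\mu,\nu_1)\leq\bdP(\bmu,\bnu)+\epsilon$ and $\dP(\nu_2,\lambda)\leq\bdP(\bnu,\blambda)+\epsilon$, and write $\nu_2=\tau_z\nu_1$. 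Lemma~\ref{lem:dPTI} then yields $\dP(\nu_1,\tau_{-z}\lambda)=\dP(\nu_2,\lambda)$, and since $\rho(\tau_{-z}\lambda)=\blambda$, the standard triangle inequality for $\dP$ gives $\bdP(\bmu,\blambda)\leq\bdP(\bmu,\bnu)+\bdP(\bnu,\blambda)+2\epsilon$; the conclusion follows by letting $\epsilon\dto 0$.

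The delicate step is separation. Assume $\bdP(\bmu,\bnu)=0$ and fix representatives $\mu,\nu$. By~\eqref{eq:bdP2}, there exists a sequence $(y_n)_{n\geq 1}$ in $\R^d$ such that $\tau_{y_n}\nu\to\mu$ weakly in $\Ps(\R^d)$. I claim $(y_n)$ is bounded. Indeed, were $|y_n|\to+\infty$ along some subsequence, then for every $R>0$ we would have $\tau_{y_n}\nu(B(0,R))=\nu(B(-y_n,R))\to 0$, since the balls $B(-y_n,R)$ eventually miss any fixed compact set and $\nu$ is a probability measure; the Portmanteau theorem applied to the open set $B(0,R)$ would then give $\mu(B(0,R))\leq\liminf_n\tau_{y_n}\nu(B(0,R))=0$ for every $R>0$, contradicting $\mu(\R^d)=1$. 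Hence $(y_n)$ is bounded, and along a convergent subsequence $y_n\to y$, the continuity of the translation operator provides $\mu=\tau_y\nu$, whence $\bmu=\bnu$. The main obstacle of the proof thus reduces to the closedness of every orbit $\{\tau_y\nu:y\in\R^d\}$ in $\Ps(\R^d)$, which is precisely the tightness statement just verified.

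Finally, to show that the metric and quotient topologies coincide, I argue that each is finer than the other. On the one hand, the definition of $\bdP$ gives $\bdP(\rho(\mu),\rho(\nu))\leq\dP(\mu,\nu)$, so $\rho$ is $1$-Lipschitz as a map into $(\bPs(\R^d),\bdP)$ and hence continuous; by the universal property of the quotient topology, every metric-open set is quotient-open. On the other hand, let $U\subset\bPs(\R^d)$ be quotient-open, $\bmu\in U$, and $\mu$ any representative. By Lemma~\ref{lem:openclosedsets}, $\rho^{-1}(U)$ is open in $\Ps(\R^d)$, so there exists $\epsilon>0$ with $\{\nu\in\Ps(\R^d):\dP(\mu,\nu)<\epsilon\}\subset\rho^{-1}(U)$. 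For any $\bnu$ with $\bdP(\bmu,\bnu)<\epsilon$, the formulation~\eqref{eq:bdP2} supplies a representative $\nu$ of $\bnu$ with $\dP(\mu,\nu)<\epsilon$, hence $\bnu=\rho(\nu)\in U$, which shows that $U$ is metric-open and completes the proof.
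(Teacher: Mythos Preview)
Your proof is correct and follows essentially the same structure as the paper's: verify the metric axioms and then compare the two topologies via the continuity of $\rho$ in one direction and a ball-containment argument in the other. Your separation argument---using Portmanteau to show that $\mu(B(0,R))=0$ for all $R$ if $|y_n|\to\infty$---is a slightly more streamlined variant of the paper's (which arranges two balls of $\nu$-mass at least $2/3$ that would become disjoint), but both rest on the same underlying tightness idea.
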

We call $\bdP$ the \emph{quotient Prohorov metric}.
\begin{proof}
  It is obvious that $\bdP$ is symmetric. To show that it satisfies the triangle inequality, we take $\bmu, \bnu, \blambda \in \bPs(\R^d)$ and fix $\mu, \nu, \lambda \in \Ps(\R^d)$ such that $\rho(\mu)=\bmu$, $\rho(\nu)=\bnu$ and $\rho(\lambda)=\blambda$. By~\eqref{eq:bdP} and the triangle inequality for $\dP$, for all $x,y \in \R^d$,
  \begin{equation*}
    \bdP(\bmu,\bnu) \leq \dP(\tau_x\mu, \tau_y\nu) \leq \dP(\tau_x\mu,\lambda) + \dP(\tau_y\nu,\lambda),
  \end{equation*}
  so that taking the infimum of the right-hand side of the inequality over $x,y \in \R^d$ and using~\eqref{eq:bdP2}, we obtain
  \begin{equation*}
    \bdP(\bmu,\bnu) \leq \bdP(\bmu,\blambda) + \bdP(\bnu,\blambda).
  \end{equation*}
  We now take $\bmu, \bnu \in \bPs(\R^d)$ such that $\bdP(\bmu,\bnu)=0$. Let $\mu, \nu \in \Ps(\R^d)$ such that $\rho(\mu)=\bmu$ and $\rho(\nu)=\bnu$. By~\eqref{eq:bdP2}, for all $n \geq 1$ there exists $y_n \in \R^d$ such that
  \begin{equation*}
    \dP(\mu,\tau_{y_n}\nu) \leq 1/n,
  \end{equation*}
  therefore $\tau_{y_n}\nu$ converges to $\mu$. By Ulam's Theorem~\cite[Theorem~1.3, p.~8]{Bil99}, $\nu$ is tight, hence there exists a centered ball $B(0,r)$, $r \geq 0$, such that
  \begin{equation}\label{eq:nuBr}
    \nu(B(0,r)) \geq 2/3.
  \end{equation}
  Likewise, by Prohorov's Theorem~\cite[Theorem~5.2, p.~60]{Bil99}, the family $(\tau_{y_n}\nu)_{n \geq 1}$ is tight, so that there exists $s \geq 0$ such that
  \begin{equation}\label{eq:nuBs}
    \forall n \geq 1, \qquad \tau_{y_n}\nu(B(0,s)) = \nu(B(y_n,s)) \geq 2/3.
  \end{equation}
  Assume that there exists an extracted sequence $(n_k)_{k \geq 1}$ such that $|y_{n_k}|$ diverges to $+\infty$: then for $k$ large enough, the balls $B(0,r)$ and $B(y_{n_k},s)$ are disjoint, so that the combination of~\eqref{eq:nuBr} and~\eqref{eq:nuBs} yields
  \begin{equation*}
    \nu\left(B(0,r) \cup B(y_{n_k},s)\right) \geq 4/3 > 1,
  \end{equation*}
  which is absurd. As a consequence, the sequence $(y_n)_{n \geq 1}$ is bounded and therefore possesses a converging subsequence, that we still index by $n$ for convenience, and the limit of which is denoted $y_*$. Using the continuity of the mapping $y \mapsto \tau_y\nu$, we get
  \begin{equation*}
    \mu = \lim_{n \to +\infty} \tau_{y_n}\nu = \tau_{y_*}\nu,
  \end{equation*}
  which implies that $\bmu = \rho(\mu) = \rho(\tau_{y_*}\nu) = \bnu$ and completes the proof that $\bdP$ is a metric.
  
  As an immediate consequence of the definition~\eqref{eq:bdP} of $\bdP$, we have the inequality
  \begin{equation*}
    \forall \mu,\nu \in \Ps(\R^d), \qquad \bdP(\rho(\mu), \rho(\nu)) \leq \dP(\mu,\nu),
  \end{equation*}
  which implies that $\rho$ is continuous for the metric topology induced on $\bPs(\R^d)$ by $\bdP$, so by definition of the quotient topology, the latter is stronger than the former. Now let $\bar{A}$ be an open set in the quotient topology. By the definition of the quotient topology, the set $A := \rho^{-1}(\bar{A})$ is open in $\Ps(\R^d)$, so that for all $\mu \in A$, there exists $r_\mu > 0$ such that $\mathcal{B}(\mu,r_\mu) \subset A$, whence
  \begin{equation*}
    A = \bigcup_{\mu \in A} \mathcal{B}(\mu,r_\mu).
  \end{equation*}
  Since $\rho(\tau_y\mu) = \rho(\mu) \in \bar{A}$ for any $y \in \R^d$ and $\mu \in A$, we may rewrite
  \begin{equation*}
    A = \bigcup_{y \in \R^d} \tau_y\left(\bigcup_{\mu \in A} \mathcal{B}(\mu,r_\mu)\right) = \bigcup_{\mu \in A}\bigcup_{y \in \R^d} \mathcal{B}(\tau_y\mu,r_\mu).
  \end{equation*}
  Introducing the notation 
  \begin{equation*}
    \bar{\mathcal{B}}(\bmu,r) := \{\bnu \in \bPs(\R^d) : \bdP(\bmu,\bnu) < r\},
  \end{equation*}
  we deduce from~\eqref{eq:bdP2} that, for all $\mu \in A$,
  \begin{equation*}
    \bigcup_{y \in \R^d} \mathcal{B}(\tau_y\mu,r_\mu) = \rho^{-1}\left(\bar{\mathcal{B}}(\rho(\mu),r_\mu)\right),
  \end{equation*}
  so that
  \begin{equation*}
    A = \bigcup_{\mu \in A} \rho^{-1}\left(\bar{\mathcal{B}}(\rho(\mu),r_\mu)\right) = \rho^{-1}\left(\bigcup_{\mu \in A} \bar{\mathcal{B}}(\rho(\mu),r_\mu)\right).
  \end{equation*}
  As a consequence,
  \begin{equation*}
    \bar{A} = \bigcup_{\mu \in A} \bar{\mathcal{B}}(\rho(\mu),r_\mu),
  \end{equation*}
  therefore $\bar{A}$ is an open set in the metric topology and the proof is completed. 
\end{proof}


\subsection*{Acknowledgements} This work was motivated by several discussions with Freddy Bouchet on the large deviations of mean-field particle systems. The author is grateful to Cyril Labbé for his careful reading of this manuscript, and thanks the referee for correcting a mistake in the proof of Lemma~\ref{lem:bdP}.



\begin{thebibliography}{10}

\bibitem{AmbGigSav08}
L. Ambrosio, N. Gigli, and G. Savar{{\'e}}.
\newblock {\em Gradient flows in metric spaces and in the space of probability
  measures}.
\newblock Lectures in Mathematics ETH Z{\"u}rich. Birkh{\"a}user Verlag, Basel,
  second edition, 2008.

\bibitem{BanFerKar05}
A.~D. Banner, R.~Fernholz, and I.~Karatzas.
\newblock Atlas models of equity markets.
\newblock {\em Ann. Appl. Probab.}, 15(4):2296--2330, 2005.

\bibitem{BenRoyTalVal98}
S.~Benachour, B.~Roynette, D.~Talay, and P.~Vallois.
\newblock Nonlinear self-stabilizing processes. {I}. {E}xistence, invariant
  probability, propagation of chaos.
\newblock {\em Stochastic Process. Appl.}, 75(2):173--201, 1998.

\bibitem{BenRoyVal98}
S.~Benachour, B.~Roynette, and P.~Vallois.
\newblock Nonlinear self-stabilizing processes. {II}. {C}onvergence to
  invariant probability.
\newblock {\em Stochastic Process. Appl.}, 75(2):203--224, 1998.

\bibitem{BenCagCarPul98}
D.~Benedetto, E.~Caglioti, J.~A. Carrillo, and M.~Pulvirenti.
\newblock A non-{M}axwellian steady distribution for one-dimensional granular
  media.
\newblock {\em J. Statist. Phys.}, 91(5-6):979--990, 1998.

\bibitem{BenCagPul97}
D.~Benedetto, E.~Caglioti, and M.~Pulvirenti.
\newblock A kinetic equation for granular media.
\newblock {\em RAIRO Mod{\'e}l. Math. Anal. Num{\'e}r.}, 31(5):615--641, 1997.

\bibitem{Bil99}
P. Billingsley.
\newblock {\em Convergence of probability measures}.
\newblock Wiley Series in Probability and Statistics: Probability and
  Statistics. John Wiley \& Sons Inc., New York, second edition, 1999.
\newblock A Wiley-Interscience Publication.

\bibitem{BobLed14}
S. Bobkov and M. Ledoux.
\newblock One-dimensional empirical measures, order statistics and Kantorovich
  transport distances.
\newblock To appear in \emph{Mem. Amer. Math. Soc.}

\bibitem{BosTal96}
M. Bossy and D. Talay.
\newblock Convergence rate for the approximation of the limit law of weakly
  interacting particles: application to the {B}urgers equation.
\newblock {\em Ann. Appl. Probab.}, 6(3):818--861, 1996.

\bibitem{BosTal97}
M. Bossy and D. Talay.
\newblock A stochastic particle method for the {M}c{K}ean-{V}lasov and the
  {B}urgers equation.
\newblock {\em Math. Comp.}, 66(217):157--192, 1997.

\bibitem{Bru16}
C. Bruggeman.
\newblock {\em Dynamics of large rank-based systems of interacting diffusions}.
\newblock PhD Thesis, Columbia University, 2016.

\bibitem{CarMcCVil03}
J.~A. Carrillo, R.~J. McCann, and C. Villani.
\newblock Kinetic equilibration rates for granular media and related equations:
  entropy dissipation and mass transportation estimates.
\newblock {\em Rev. Mat. Iberoamericana}, 19(3):971--1018, 2003.

\bibitem{CarMcCVil06}
J.~A. Carrillo, R.~J. McCann, and C. Villani.
\newblock Contractions in the 2-{W}asserstein length space and thermalization
  of granular media.
\newblock {\em Arch. Ration. Mech. Anal.}, 179(2):217--263, 2006.

\bibitem{CatGuiMal08}
P.~Cattiaux, A.~Guillin, and F.~Malrieu.
\newblock Probabilistic approach for granular media equations in the
  non-uniformly convex case.
\newblock {\em Probab. Theory Related Fields}, 140(1-2):19--40, 2008.

\bibitem{CatPed16}
P. Cattiaux and L. P{\'e}deches.
\newblock The 2-D stochastic Keller-Segel particle model: existence and
  uniqueness.
\newblock {\em ALEA, Lat. Am. J. Probab. Math. Stat.} 13(1):447--463, 2016.

\bibitem{ChaGozZit14}
D. Chafa\"\i, N. Gozlan, and P.-A. Zitt.
\newblock First-order global asymptotics for confined particles with singular
  pair repulsion.
\newblock {\em Ann. Appl. Probab.}, 24(6):2371--2413, 2014.

\bibitem{ChaPal10}
S.~Chatterjee and S.~Pal.
\newblock A phase transition behavior for {B}rownian motions interacting
  through their ranks.
\newblock {\em Probab. Theory Related Fields}, 147(1-2):123--159, 2010.

\bibitem{DawGar86}
D.~A. Dawson and J.~G{{\"a}}rtner.
\newblock Large deviations and tunnelling for particle systems with mean field interaction.
\newblock {\em C. R. Math. Rep. Acad. Sci. Canada} 8(6):387--392, 1986.

\bibitem{DawGar87}
D.~A. Dawson and J.~G{{\"a}}rtner.
\newblock Large deviations from the McKean-Vlasov limit for weakly interacting diffusions.
\newblock {\em Stochastics} 20(4):247--308, 1987.

\bibitem{DawGar89}
D.~A. Dawson and J.~G{{\"a}}rtner.
\newblock Large deviations, free energy functional and quasi-potential for a
  mean field model of interacting diffusions.
\newblock {\em Mem. Amer. Math. Soc.}, 78(398):iv+94, 1989.

\bibitem{DemShkVarZei16}
A. Dembo, M. Shkolnikov, S.~R.~S. Varadhan, and O. Zeitouni.
\newblock Large deviations for diffusions interacting through their ranks.
\newblock {\em Comm. Pure Appl. Math.}, 69(7):1259--1313, 2016.

\bibitem{DemZei10}
A. Dembo and O. Zeitouni.
\newblock {\em Large deviations techniques and applications}, volume~38 of {\em
  Stochastic Modelling and Applied Probability}.
\newblock Springer-Verlag, Berlin, 2010.
\newblock Corrected reprint of the second (1998) edition.

\bibitem{DupEll97}
P. Dupuis and R.~S. Ellis.
\newblock {\em A weak convergence approach to the theory of large deviations}.
\newblock Wiley Series in Probability and Statistics: Probability and
  Statistics. John Wiley \& Sons, Inc., New York, 1997.
\newblock A Wiley-Interscience Publication.

\bibitem{DupLasRam15}
P. Dupuis, V. Laschos, and K. Ramanan.
\newblock Large deviations for empirical measures generated by Gibbs measures
  with singular energy functionals.
\newblock {\em arXiv:1511.06928}, 2015.

\bibitem{Ell85}
R.~S. Ellis.
\newblock {\em Entropy, large deviations, and statistical mechanics}, volume
  271 of {\em Grundlehren der Mathematischen Wissenschaften [Fundamental
  Principles of Mathematical Sciences]}.
\newblock Springer-Verlag, New York, 1985.

\bibitem{Fer02}
R.~Fernholz.
\newblock {\em Stochastic portfolio theory}, volume~48 of {\em Applications of
  Mathematics (New York)}.
\newblock Springer-Verlag, New York, 2002.
\newblock Stochastic Modelling and Applied Probability.

\bibitem{FouSun13}
J.-P. Fouque and L.-H. Sun.
\newblock Systemic risk illustrated.
\newblock {\em Handbook on Systemic Risk}, pages 444--452, 2013.

\bibitem{FouJou15}
N. Fournier and B. Jourdain.
\newblock Stochastic particle approximation of the Keller-Segel equation and
  two-dimensional generalization of Bessel processes.
\newblock {\em Ann. Appl. Probab.}, 27(5):2807--2861, 2017.

\bibitem{JorKinOtt98}
R. Jordan, D. Kinderlehrer, and F. Otto.
\newblock The variational formulation of the Fokker--Planck equation.
\newblock {\em SIAM J. Math. Anal.}, 29(1):1--17, 1998.

\bibitem{JouMal08}
B.~Jourdain and F.~Malrieu.
\newblock Propagation of chaos and {P}oincar\'e inequalities for a system of
  particles interacting through their {CDF}.
\newblock {\em Ann. Appl. Probab.}, 18(5):1706--1736, 2008.

\bibitem{Jou00:MCAP}
B. Jourdain.
\newblock Diffusion processes associated with nonlinear evolution equations for
  signed measures.
\newblock {\em Methodol. Comput. Appl. Probab.}, 2(1):69--91, 2000.

\bibitem{JouRey13}
B. Jourdain and J. Reygner.
\newblock Propogation of chaos for rank-based interacting diffusions and long
  time behaviour of a scalar quasilinear parabolic equation.
\newblock {\em Stoch. Partial Differ. Equ. Anal. Comput.}, 1(3):455--506, 2013.

\bibitem{JouRey15}
B. Jourdain and J. Reygner.
\newblock Capital distribution and portfolio performance in the mean-field
  {A}tlas model.
\newblock {\em Ann. Finance}, 11(2):151--198, 2015.

\bibitem{KolShk16}
P. Kolli and M. Shkolnikov.
\newblock SPDE limit of the global fluctuations in rank-based models.
\newblock {\em arXiv:1608.00814}, 2016.

\bibitem{LebSer15}
T. Leblé and S. Serfaty.
\newblock Large deviation principle for empirical fields of Log and Riesz Gases.
\newblock {\em arXiv:1502.02970}, 2015.

\bibitem{Leo87}
C.~L{{\'e}}onard.
\newblock Large deviations and law of large numbers for a mean field type
  interacting particle system.
\newblock {\em Stochastic Process. Appl.}, 25(2):215--235, 1987.

\bibitem{Mal03}
F.~Malrieu.
\newblock Convergence to equilibrium for granular media equations and their
  {E}uler schemes.
\newblock {\em Ann. Appl. Probab.}, 13(2):540--560, 2003.

\bibitem{MukVar16}
C. Mukherjee and S.~R.~S. Varadhan.
\newblock Brownian occupation measures, compactness and large deviations.
\newblock {\em Ann. Probab.}, 44(6):3934--3964, 2016.

\bibitem{Ott01}
F. Otto.
\newblock The geometry of dissipative evolution equations: the porous medium
  equation.
\newblock {\em Comm. Partial Differential Equations}, 26(1-2):101--174, 2001.

\bibitem{PalPit08}
S.~Pal and J.~Pitman.
\newblock One-dimensional {B}rownian particle systems with rank-dependent
  drifts.
\newblock {\em Ann. Appl. Probab.}, 18(6):2179--2207, 2008.

\bibitem{ReySpi15}
L. Rey-Bellet and K. Spiliopoulos.
\newblock Irreversible {L}angevin samplers and variance reduction: a large
  deviations approach.
\newblock {\em Nonlinearity}, 28(7):2081--2103, 2015.

\bibitem{Rey17}
J.~Reygner.
\newblock Long time behaviour and mean-field limit of Atlas models.
\newblock {\em arXiv:1705.08140}, 2017.

\bibitem{Rey15}
J. Reygner.
\newblock Chaoticity of the stationary distribution of rank-based interacting
  diffusions.
\newblock {\em Electron. Commun. Probab.}, 20:no. 60, 1--20, 2015.

\bibitem{Shk12}
M. Shkolnikov.
\newblock Large systems of diffusions interacting through their ranks.
\newblock {\em Stochastic Process. Appl.}, 122(4):1730 --
  1747, 2012.

\bibitem{Vil09}
C. Villani.
\newblock {\em Optimal transport}, volume 338 of {\em Grundlehren der
  Mathematischen Wissenschaften [Fundamental Principles of Mathematical
  Sciences]}.
\newblock Springer-Verlag, Berlin, 2009.
\newblock Old and new.

\bibitem{WanWanWu10}
R. Wang, X. Wang, and L. Wu.
\newblock Sanov's theorem in the {W}asserstein distance: a necessary and
  sufficient condition.
\newblock {\em Statist. Probab. Lett.}, 80(5-6):505--512, 2010.

\end{thebibliography}
\end{document}